\newtheorem{theorem}{Theorem}[section]
\newtheorem{lemma}[theorem]{Lemma}
\newtheorem{proposition}{Proposition}[section]
\theoremstyle{definition}
\newtheorem{definition}{Definition}[section]
\newtheorem{remark}{Remark}[section]
\newcommand{\wi}{W^{H,\perp}}
\newcommand{\NN}{\mathbb{N}}
\newcommand{\EE}{\mathbb{E}}
\newcommand{\PP}{\mathbb{P}}
\newcommand{\FF}{\mathcal{F}}
\newcommand{\RR}{\mathbb{R}}
\newcommand{\absv}[1]{\left| #1 \right|}
\newcommand{\inv}[1]{\frac{1}{#1}}
\newcommand{\norm}[1]{\left\Vert #1 \right\Vert}
\newcommand{\indic}[1]{\mathbf{1}_{[#1]}}
\newcommand{\pdx}{\partial_x}
\newcommand{\pdt}{\partial_t}
\newcommand{\wh}{W^H}
\newcommand{\ls}{\lesssim}
\newcommand{\eps}{\varepsilon}
\newcommand{\sgn}[1]{\text{sgn}\left( #1 \right)}
\newcommand{\ccc}[1]{\mathcal{C}^{#1}}
\newcommand{\blue}[1]{\textcolor{black}{#1}}
\title{Zero noise limit for singular ODE regularized by fractional noise}
\author{Paul Gassiat$^1$, Łukasz Mądry$^2$}
\date{}
\begin{document}

%

\maketitle 

\begin{abstract}
We consider a scalar ODE with a power singularity at the origin, regularized by an additive fractional noise. We show that, as the intensity in front of the noise goes to $0$, the solution converges to the extremal solutions to the ODE (which exit the origin instantly), and we quantify this convergence with subexponential probability estimates. This extends classical results of Bafico and Baldi in the Brownian case. The main difficulty lies in the absence of the Markov property for the system. Our methods combine a dynamical approach due to Delarue and Flandoli, with techniques from the large time analysis of fractional SDE (due in particular to Panloup and Richard).
\end{abstract}


%


\section{Introduction}

The purpose of this work is to investigate the so-called zero noise limit of a stochastic differential equation of the form :

\begin{equation}\label{eq:intro_equation}
X_t^{\eps} = X_0 + \int_0^t f(X^{\eps}_r) dr + \eps W_t.
\end{equation}

We are interested in the case when $f$ is singular (in particular, not Lipschitz continuous), and the deterministic equation $\dot{x}= { f(x) }$ is ill-posed, but the additive noise term $\eps W$ has a regularizing effect so that, for each $\eps>0$, \eqref{eq:intro_equation} has a unique solution. This phenomenon is typically \blue{referred} to as ''regularization by noise'' for ODE\blue{s}. In the case where $W$ is a Brownian motion, this goes back to old works of \cite{Zvo74}, \cite{Ver81} who showed that $f$ bounded is sufficient for well-posedness of \eqref{eq:intro_equation}.
\medskip

Heuristically, regularization by noise only requires the path $W$ to be sufficiently irregular (or, equivalently, its occupation measure to be sufficiently regular) to counter-act irregularity of the drift $f$, so that in principle one could allow $W$ to be taken in a much wider class than classical Brownian motion. In this context, fractional Brownian motion (fBm) $W^H$ is a natural candidate because, in particular, it comes with a parameter $H \in (0,1)$ which allows to tune its regularity. Early results in the scalar case were obtained in the work of Nualart-Ouknine \cite{NO02}, but the breakthrough came with the work of Catellier-Gubinelli \cite{CG16} who showed that \eqref{eq:intro_equation} is well-posed as long as
\[
b \in C^\gamma, \;\;\; \gamma > 1 - \frac{1}{2H}. 
\]
Note that $\gamma$ is allowed to be negative (in which case the integral $\int_0^{\cdot} b(X^\eps_s) ds$ needs to be suitably interpreted), and in fact, a particularly striking feature of the above result is that, as $H \to 0$, the regularity requirement covers the whole range of $\gamma \in (-\infty,1)$.
\medskip

Following \cite{CG16}, regularization of ODE by fractional noise has been a very active area of research, in particular with the help of tools based around L\^e's stochastic sewing lemma \cite{Le20}. Let us mention for instance the study in this context of : McKean-Vlasov equations  \cite{GHM22},  $C^\infty$-regularization \cite{HP21},  convergence rates of numerical schemes \cite{BDG19,GHR23}, time-dependent drifts \cite{GG22}, equations with multiplicative noise \cite{catellier_duboscq_rde_25, DG22, GH22, BH23, MM23}. 
\medskip

Once one has a well-posedness result for \eqref{eq:intro_equation}, it is then a natural question whether, in the limit as $\eps \to 0$, the solutions $X^\eps$ converge to one (or several) specific solution(s) to the deterministic equation. These solutions could then be interpreted as the correct ''physical'' solution to the original equation, a phenomenon which has been called ''selection by noise''  {(see for instance the discussion in Section 1.4 of \cite{Fla11})}.
\medskip

In the case where $W$ is a Brownian motion, and the equation is scalar (with an isolated singularity), this question has been solved by Bafico-Baldi \cite{BB82} who showed that, as $\eps \to 0$, the solutions $X^\eps$ concentrate near the ''extremal'' solutions which exit the singularity immediately. Their methods relied on explicit solutions to elliptic PDE, and as such were limited to the one-dimensional case. Still in the scalar case, Gradinaru-Herrmann-Roynette \cite{GHR01} showed that the large deviation rate for these equations may be different from the classical one (and depends on $f$). More recently, some approaches based on the dynamics of the solutions have been considered by Trevisan \cite{Tre13} and especially \cite{DF14}. These methods have then be applied in (among others) extensions to multiple dimensions \cite{DM19},  {Vlasov-Poisson point charge dynamics \cite{DFV14} } and mean-field games \cite{DT20}. A similar approach has also been put forward by Pilipenko and Proske \cite{PP18}, with further extensions to higher dimensions \cite{PP20} and L\'evy noise \cite{KP21}.

\medskip

Our goal in this paper is to consider the zero-noise limit of \eqref{eq:intro_equation} when $W=W^H$ is a fBm. We will focus in this paper on the simplest case, where the equation is scalar with an isolated singularity, and more precisely with a power-law drift, of the form
\[
 { f(x) } = \sgn{x}  |x|^{\gamma},
\]
for some $1 - \frac{1}{2H}  < \gamma < 1$ (suitably interpreted as Schwartz distribution on $\RR$ when $\gamma \leq -1$, \blue{see Remark \ref{rem:distribution_integer_gam} below for more details}).
\medskip

Recall, that (at least in the case $\gamma \geq 0$), the ODE $\dot{x} =  { f(x) }$ admits an infinite family of solutions $(x^{+,t_0},  x^{-,t_0})_{t_0\geq 0}$, given by
\[ t \mapsto x^{\pm,t_0}_t:= \pm c_{\gamma}\max( t - t_0, 0)^{\inv{1-\gamma}},  { \quad c_{\gamma} = (1-\gamma)^{\inv{1-\gamma}} .} \]
\medskip

Our main result states that, in this case as well, the solutions $X^\eps$ concentrate near the extremal solutions $x^{\pm,0}$ of the ODE, at times of order
\[ t_\eps := \eps^{\left( \frac{1}{1-\gamma} - H\right)^{-1}}, \]
with high (sub-exponential) probability. 
\begin{theorem}\label{thm:main_simplified}
Let $X^{\eps}$ be as in \eqref{eq:intro_equation} with $W$ a fBm of parameter  {$H \in (0,1)$}. Then there exists a random time $\psi_{\eps}$ such that, for suitable $\nu, \alpha>0$, 
either
\[ \forall t\geq 0,    X^{\eps}_{\psi_{\eps}+t}   \geq x^{+,0}_t -  \eps^{\nu} t^\alpha, \]
or
\[ \forall t\geq 0,    X^{\eps}_{\psi_{\eps}+t}   \leq x^{-,0}_t +   \eps^{\nu} t^\alpha, \]

In addition, for some $\kappa >0$, it holds, uniformly in $\eps>0$,
\begin{equation} \label{eq:SubexpIntro}
\ln \PP\left( \frac{\psi_{\eps}}{t_\eps} > z \right) \ls - z^{\kappa}.
\end{equation} 
%
\end{theorem}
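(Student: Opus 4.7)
The key observation is that the equation admits a natural self-similar scaling. Since $b(\lambda y)=\lambda^{\gamma}b(y)$ for $\lambda>0$ and $W^H_{\lambda \cdot}\laweq \lambda^H\widetilde W^H_{\cdot}$, the rescaling $Y_s:=X^{\eps}_{t_\eps s}/\lambda_\eps$ with $\lambda_\eps:=t_\eps^{1/(1-\gamma)}$ yields, in law,
\[ Y_s = X_0/\lambda_\eps + \int_0^s b(Y_u)\,du + \widetilde W^H_s, \]
as soon as $\eps t_\eps^H=\lambda_\eps$, which is the defining relation for $t_\eps$. The rescaled equation is $\eps$-free and its extremal solutions are $y^{\pm,0}_s=\pm c_\gamma s^{1/(1-\gamma)}$. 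It therefore suffices to produce an a.s.\ finite random time $\widetilde\psi$ after which $Y$ stays within $s^{\alpha}$ of one of the $y^{\pm,0}$, and to prove $\ln\PP(\widetilde\psi>z)\ls -z^{\kappa}$; undoing the scaling recovers the $\eps^{\nu}t^{\alpha}$ estimate and $\psi_\eps = t_\eps \widetilde\psi$.

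Following the dynamical approach of \cite{DF14}, I would split state space into an inner region $\{|y|<\delta\}$ and an outer region $\{|y|\geq\delta\}$ for small $\delta>0$. In the outer region, $b$ is locally Lipschitz; once $Y_s$ hits $\pm\delta$ with the drift-favouring sign, a comparison/Gronwall argument in $p$-variation norm, together with a bound on the running $p$-variation of $\widetilde W^H$, keeps $Y$ within $s^\alpha$ of the corresponding $y^{\pm,0}$ for all later $s$ with sufficiently high probability. The delicate part is the inner region: I must show that $Y$ exits $[-\delta,\delta]$ in a window of $O(1)$ length with probability bounded \emph{below uniformly in the past}, and with the ''correct'' sign so that the outer-region analysis takes over.

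This is where the absence of the Markov property bites, and where I would borrow the philosophy of Panloup--Richard's large-time analysis of fractional SDE. Using the Mandelbrot--Van Ness representation, split at each stopping time $\tau$ the future $\widetilde W^H_{\tau+\cdot}-\widetilde W^H_\tau$ into an $\FF_\tau$-measurable smooth ''memory'' contribution plus an independent ''fresh'' fBm-like component. On a good event $G_\tau$ on which the memory term is bounded on $[\tau,\tau+1]$, the escape problem reduces to one driven by the fresh noise, for which the (conditional) non-degeneracy of fBm together with stochastic-sewing estimates for the singular drift $b$ produces a uniform lower escape probability $p>0$. Chaining disjoint windows $[k,k+1]$, $k=1,\dots,n$, yields
\[ \PP(\widetilde\psi>n)\leq (1-p)^n+\sum_{k=1}^n\PP(G_k^c), \]
and Gaussian tails for the memory process give $\PP(G_k^c)$ with sufficiently fast decay that the whole right-hand side is subexponential in $n$; this is \eqref{eq:SubexpIntro}.

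The hardest step will be producing the single-window escape probability $p$ uniformly in the past, which amounts to defining the good event $G_\tau$ so that (i) on $G_\tau$ the memory perturbation is small enough that the sewing-based escape estimates go through for the singular $b$, and (ii) $\PP(G_\tau^c)$ decays at the correct rate to be summed. Balancing these two requirements, together with the window length, should fix the admissible exponent $\kappa$; I expect the exponent to deteriorate as $H$ approaches the critical threshold $H=\tfrac{1-\gamma}{2}$, where the regularization effect becomes marginal.
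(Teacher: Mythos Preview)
Your scaling reduction to the $\eps$-free equation is exactly right and is the first step in the paper as well. The three-stage structure (escape from the critical strip, then stay above the extremal curve) is also morally the same. However, the proposal has a genuine gap precisely at the point you identify as hardest.

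The inequality $\PP(\widetilde\psi>n)\leq (1-p)^n+\sum_{k=1}^n\PP(G_k^c)$ is not justified by your construction. With unit windows $[k,k+1]$, the events $\{G_k\}$ are \emph{not} independent, and the conditional law of $G_{k+1}$ given failure in window $k$ is exactly what you cannot control: a failure in window $k$ is typically caused by a large fresh noise there, and that same noise immediately becomes part of the memory for window $k+1$, making $G_{k+1}^c$ \emph{more} likely, not less. So you cannot get a uniform $\PP(G_k^c)$ estimate decoupled from the history of previous failures, and the $(1-p)^n$ factor does not emerge from any legitimate product bound. This is the entire point of the Panloup--Richard machinery: the waiting times must be \emph{adaptive and growing}, of the form $C_W j^{\theta}+L_j^{\mu_g}$ where $L_j$ is a norm of the Brownian path in the most recent window and $j$ is the attempt index. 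The $j^{\theta}$ term forces the memory from \emph{all} previous windows to have decayed by the start of the $(j{+}1)$-th attempt, and the $L_j^{\mu_g}$ term absorbs the random size of the most recent failure. Only then can one define an admissibility condition whose conditional probability is bounded below uniformly in $j$; the sub-exponential (rather than exponential) rate in $\kappa$ is the price of these growing waits. Your fixed-window scheme produces no such uniformity.

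A second, smaller gap: for the outer region you propose a Gronwall argument in $p$-variation. When $\gamma<0$ the drift is decreasing on $(0,\infty)$, so the flow is contractive and a straightforward comparison does \emph{not} propagate a one-sided bound for all future times. The paper handles this with explicit flow estimates on a geometric sequence of intervals $[T_k,T_{k+1}]$ with $T_k\sim q^k$, requiring the increments of the noise on each interval to grow like $q^{k\alpha}$ with $\alpha<\frac{1}{1-\gamma}$; this is what produces the error term $t^{\alpha}$ and also what makes Stage~3 a genuine stopping problem with its own sub-exponential tail, not just a deterministic comparison. Finally, a minor slip: the critical threshold is $H=\tfrac{1}{2(1-\gamma)}$, not $\tfrac{1-\gamma}{2}$.
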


We refer to Theorem \ref{thm:main} below in the main text for a more detailed and precise version which, in particular, does not require the drift to be symmetric and specifies the possible values for the parameters $\nu, \alpha, \kappa$. The theorem above provides a one-sided bound away from zero, the convergence of $X^\eps$ to $x^{\pm,0}$ requires an additional bound on $|X^\eps|$, which is obtained in Proposition \ref{prop:upper_bound} below (since this concerns the behaviour away from the singularity, this poses no difficulty).

\vspace{5mm}

\textbf{Difficulties and outline of the proof}

We first remark that the methods of \cite{BB82}, relying on PDE associated to Markov diffusions, as well as those of \cite{Tre13}, \cite{GHR01}, which rely on It\^o calculus and martingale arguments, can clearly not be applied here. 
 
The only other small-noise result for fractional noise that we are aware of is due to Pilipenko-Proske \cite{PP18}. They prove convergence to extremal solutions in the case $\gamma>0$ above. Their simple and elegant proof uses the ergodic theorem and relies on the fact that, in this case, the flow of the ODE is strictly expanding (since $f'>0$). The case $\gamma  \leq 0$, where the drift is not globally monotone, cannot be treated by their arguments (recall that the ability to take $\gamma$ arbitrarily negative is an important feature of regularization by fractional noise). In addition, their approach does not lead to any quantitative bounds such as \eqref{eq:SubexpIntro} above.

In our proof, we follow the dynamic approach put forward in \cite{DF14}. The main idea of their work can be summarized as follows. Recall that, for $W$ fBm with Hurst index $H$,
\[ \eps  W_t \approx \eps t^H, \mbox{ while }\;\;\;|x^{\pm}_t| \approx t^{\frac{1}{1-\gamma}}.\]
Note that 
\[
t_{\eps}=\eps^{\left( \frac{1}{1-\gamma} - H\right)^{-1}}
\]
is the unique $t$ such that these two terms are equal, and let
\[ 
x_\eps = \eps t_{\eps}^H = t_{\eps}^{\frac{1}{1-\gamma}} 
\]
their common value at that time.

 Delarue-Flandoli suggest (in the case $H=\frac 1 2 $) the following heuristics : 
\begin{itemize}
\item for times $t \ll t_\eps$, the system essentially behaves as $\eps W_t$ (the drift is negligible) ,
\item for times $t \gg t_\eps$, the noise is negligible and the system behaves as  {$x^{\pm}_t$},
\end{itemize}
and they call $( t_\eps, x_\eps)$ the \emph{transition point} between these two regimes, which is the scale at which the system is going to select $x^+$ or $x^-$. Their proof then relies on the following iterations 
\begin{enumerate}
\item wait until the solution hits the boundaries $\pm x_\eps$ of the critical strip, which happens in a time of order $t_\eps$,
\item wait until the solution drops down below (a multiple of) the extremal solution $x^{\pm}$.
\end{enumerate}
With positive probability, the second step never concludes, which means that the solution $X^\eps$ never goes back to $0$ and must stay close to $x^\pm$. By the Markov property, all the subsequent attempts are independent, so that, a.s., after finitely many tries, the second step is successful.

\medskip
We aim to adapt their proof to our context. However, while we can follow the outline of proof above, a major difficulty comes from the fact that we do not have the Markov property, and subsequent steps are no longer independent. In order to solve this problem, we then take inspiration from works on long-time behaviour of fractional SDE (indeed, after rescaling time by a factor $t_\eps$, the zero-noise limit over a finite interval is expressed as a problem of long-time behaviour). Starting with \cite{Hai05}, a technique to deal with the lack of Markov property has been to introduce ''waiting times'' in the proof. 

The idea is as follows : recall that, for $W=W^H$ a fBm with its natural filtration $(\mathcal{F}_u)_{u \geq 0}$, for given $s \leq u \leq v$, one can always decompose the increment $W_{u,v} = W_v - W_u$ as
\[ \ \;\;\wh_{u,v} = W^{\leq s}_{u,v}  +  W^{>s}_{u,v} , \]
where $W^{>s}$ is independent from $\mathcal{F}_s$ and  { $W^{\leq s}$ is $\FF_s$-measurable}, with the additional property that $W^{\leq s}_{u,v} $ is small if $(u-s) \gg (v-u)$. This means that, if at time $s$, the influence of the past process $W^{\leq s}$ is too large (and, for instance, goes in the wrong direction), we should simply wait long enough and it will become negligible. On a more precise level, we add at each step an ''admissibility'' condition (depending on the noise up to that time), and our scheme of proof can now be written as the succession of three stages :

\begin{itemize}
\item \textbf{Stage 1 : waiting stage}. We wait some amount of time (long enough to make the influence of the past small enough), and this step succeeds if the system is admissible at the end of it.
\item \textbf{Stage 2 : exit of the critical strip}. If Step 1 succeeded, we wait an additional amount of time. This step succeeds if, at the end of it, $X^\eps$ is outside of some interval of the form $[-C x_\eps, C x_\eps]$,  and the system is still in an admissible stage.
\item \textbf{Stage 3 : staying above the extremal solution} If Step 2 succeeded, we wait until $X^\eps$ goes back below (a multiple) of $x^\pm$. This step succeeds if this never happens.
\end{itemize}

On a technical level, the way we define the waiting times in Stage 1 is strongly inspired by the recent work of Panloup-Richard \cite{PR20} (i.e. the waiting step after failing $k$ times is given as a well-chosen power of $k$ and of the norm of the noise in the most recent interval).

The main technical work is then to ensure that, given the way we have defined the various times in the stages above, each stage always has a (conditional) probability of succeeding that is (uniformly)  strictly positive, which then allows us to obtain quantitative bounds on the time after which the solution stays above (or below) the extremal curve.

\vspace{5mm}
\textbf{Outline of the paper.} The rest of the paper is structured as follows. First, we conclude this introduction by introducing some notation. In Section \ref{sec:result}, we then  state and comment on our main result. In Section \ref{sec:main_section}, we first define precisely the scheme of proof and the associated random times, and in Subsections \ref{subsec:stage_one}, \ref{subsec:stage_two} and \ref{subsec:stage_three}, we describe precisely our estimates on the three subsequent stages of the dynamics. In Section \ref{sec:proof_main} we show how these elements of the construction come together and give suitable tail estimates. Some technical estimates are relegated to Section \ref{sec:technical}.

\subsection{Notation}

We will employ standard notation - for any path $(u_t)_{t \in [0,T]}$ where $T$ is fixed, H\"older seminorm is denoted with:

\[ \norm{ u }_{\ccc{\alpha};[0,T]} = \sup_{s<t<T} \frac{ \absv{u_t - u_s } }{ \absv{t-s}^{\alpha} }, \]

and we will say that $u \in \ccc{\alpha}$ if $\norm{ u }_{\ccc{\alpha}} < \infty$. Analogously, we will denote the supremum norm with $\norm{ w}_{\infty}$, i.e.:

\[ \norm{ w }_{\infty;[0,T]} = \sup_{r <T} \absv{ w_r } .\]

In both cases we will drop the $[0,T]$ from notation if it is clear from context. 

We will also use the following notation to denote equalities and inequalities in law - let $X,Y$ be two random variables. If $X$ and $Y$ are equal in law, we will denote it as $X =_{\PP} Y$. Similarly, we say that $X$ is stochastically dominated by $Y$ if:

\[ \forall z \in \RR\;\;\PP\left( X > z \right) \leq \PP\left( Y > z \right)  .\]

We will denote it with $X \leq_{\PP} Y$. We will use standard notation to denote integration (expectation), i.e. expected value of $X$ is denoted with $\EE X$. We will write $(\FF_t)_{t \geq 0}$ to denote a filtration. For some fixed $\tau > 0$ notation $\EE^{\FF_{\tau}} \left[ X \right]$ will signify the conditional expectation and $\PP^{\FF_{\tau}}\left( A \right) = \EE^{\FF_{\tau}} \indic{A}$ for an event $A$. If a random variable $Y$ is $\FF_{\tau}$-measurable, we will denote it with $Y \in \FF_{\tau}$.

Finally, we will write $a \lesssim b$ when $a \leq C b$ for some constant $C$. If the constant $C$ depends on some real-valued parameters $u,v,g$ (for instance), then we may write $a \ls_{u,v,g} b$. Analogously, we will write $a \simeq b$ if $a \ls b$ and $b \ls a$. Similarly, when need arises to compare two random variables $X,Y$, we will write $X \ls_{\PP} Y$. 

\section{Main result} \label{sec:result}

Throughout the rest of the paper, $f$ will be fixed as an element in $C^{\gamma}(\RR)$ (i.e. the Besov-H\"older space $B^{\gamma}_{\infty,\infty}(\RR)$) for some $\gamma \in \RR$, s.t. both its restrictions to $(0,\infty)$ and $(-\infty,0)$ are smooth functions, given by
\begin{equation} \label{eq:defb1}
   { f(x) } = A^+ x^{\gamma}, \;\; x >0, \;\;\;\; \;\;\;\; \;\;\;\; \;\;\;\;  { f(x) } = - A^{-} (-x)^{\gamma},  x <0,
 \end{equation}
where $A^+$ and $A^-$ are fixed and strictly positive. We further assume that, as a distribution on $\RR$, $f$ is $\gamma$-homogeneous, namely
\begin{equation} \label{eq:defb2}
\forall \lambda >0,  f(\lambda \cdot) = \lambda^{\gamma} f(\cdot). 
\end{equation}

We will consider the solution $X^\eps$ to the SDE
\begin{equation} \label{eq:SDEmain}
X_t^{\varepsilon} =  \int_0^t f(X^\varepsilon_s) ds +  \eps W_t
\end{equation}
for a given continuous path $W$ and parameter $\eps > 0$. Since $f(X^\varepsilon)$ does not necessarily make sense pointwise, the precise definition requires some care (and is deferred to Subsection \ref{subsec:prelim} below), simply note that it implies in particular that $X^\varepsilon$ is continuous and for any sequence of smooth approximations $ {f^n \to_n f}$ $\in$ $C^\gamma$, it holds that,
\[ \forall t \geq s \geq 0, \; \; X^{\varepsilon}_t = X^{\varepsilon}_s + \lim_n\left( \int_s^t  {f^n}(X^\varepsilon_u) du \right) + \eps (W_t - W_s).\]
(In particular, $X^\varepsilon$ is a solution in the classical sense on any interval where it is non zero).

We will take $W$ as a sample path of a fBm with Hurst index $H \in (0,1)$, such that
\begin{equation}
\gamma > 1 - \frac{1}{2H},
\end{equation}
in which case it follows from the results of \cite{CG16} that, almost surely, a solution to \eqref{eq:SDEmain} exists and is unique.

We further let  $\varphi=\varphi(x,t) : (0,\infty) \times \RR_+ \to \RR$ be the semi-flow on $(0,+\infty)$ of the ODE $\dot{x} = \sgn{x} \absv{x}^\gamma$, namely $\varphi$ is such that
\[ \varphi(x,0) = x, \; \;  {\forall x>0,} \;\;\; \partial_t \varphi(x,t) = \varphi(x,t)^{\gamma} \]
and in fact one has the explicit formula 
\[ \varphi(x,t) = (1-\gamma)^{\inv{1-\gamma}} \left( x^{1-\gamma} + t \right)^{\frac{1}{1-\gamma}}  \]
which we extend by continuity to $x=0$.

Note then that the extremal solutions to $\dot{x} =  { f(x) }$ which exit $0$ immediately are simply given by
\[ \forall t \geq 0, \;\;\; x^{+,0}_t = \varphi(0,A^+ t), \;\;\; x^{-,0}_t = - \varphi(0,A^- t) .\]

Our main result is then as follows, where we recall from the introduction that
\[ t_{\eps} = \eps^{\frac{1-\gamma}{1-H(1-\gamma)}}, \;\;\;\;\;\;\;x_{\eps}=  \eps t_{\eps}^H = t_{\eps}^{\frac{1}{1-\gamma}}.\]

%
%

\begin{theorem}\label{thm:main}
Let $\alpha, \kappa$ be fixed to satisfy:

\[  { \kappa < \frac{2}{3} \left( \alpha \land 1 - H \right), \quad \alpha \in \left( H, \inv{1-\gamma} \right) } .\]

Then, for $X^\eps$ as above, there exists a random time $\psi_{\eps,\alpha,\kappa}$ such that:

\[ \forall t >0,\;\;\absv{ X_{ \psi_{\eps,\alpha,\kappa} + t} } \geq \left( \varphi\left( x_{\eps}, A^+ t \right) -  \eps t_{\eps}^{H-\alpha} t^{\alpha} \right) \indic{C^+_\eps} + \left( \varphi(x_{\eps}, A^- t ) -  \eps t_{\eps}^{H-\alpha} t^{\alpha} \right) \indic{C_{\eps}^-}, \]

where:

\begin{enumerate}
\item  $t_{\eps}^{\kappa} \ln \PP\left( \psi_{\eps,\alpha,\kappa} > z \right) \ls -z^{\kappa} $ ,
\item $C^+_{\eps}, C^-_{\eps}$ denote the two events such that:

\[ C^+_{\eps} = \{ X^{\eps}_{\psi_{\eps,\alpha,\kappa}} > 0 \}, \quad C^-_{\eps} = \{ X^{\eps}_{ \psi_{\eps,\alpha,\kappa} } < 0 \} .\]

In addition
\[ \PP(C^+_{\eps}) =  1 - \PP(C^-_{\eps}) \in (0,1) \mbox{ and does not depend on } \eps>0. \]
\end{enumerate}

\end{theorem}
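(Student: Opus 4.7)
The plan is to implement explicitly the three-stage iterative construction sketched in the introduction. First, using the $H$-self-similarity of fBm, I would pass to the rescaled process $\widetilde X_t := t_\eps^{-1/(1-\gamma)} X^\eps_{t_\eps t}$, which solves an SDE of the same form as \eqref{eq:SDEmain} driven by a standard fBm, with no remaining $\eps$-dependence in the drift/noise balance. In these units the critical strip has width of order $1$, and the theorem reduces to showing that $\widetilde X$ eventually stays above (or below) one of the extremal curves up to an error of order $t^\alpha$, with the corresponding random switching time having a tail of the form $\exp(-c z^\kappa)$. This reformulation also makes the $\eps$-independence of $\PP(C^\pm_\eps)$ immediate, leaving only the quantitative tail bound to prove.

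Next, I would build, for each attempt $k \geq 1$, a triple of stopping times $\tau_k \leq \sigma^{(1)}_k \leq \sigma^{(2)}_k$ as follows. Stage~1 waits a length $w_k = k^{a}(1 + \norm{W}_{\ccc{H};[\tau_{k-1},\tau_k]})^{b}$, chosen following \cite{PR20} so that, via the decomposition $W = W^{\leq \tau_k} + W^{>\tau_k}$, the $\FF_{\tau_k}$-measurable component of the noise after $\sigma^{(1)}_k$ is pathwise negligible on Stages~2--3. Stage~2 succeeds if at $\sigma^{(2)}_k = \sigma^{(1)}_k + 1$ one has $|\widetilde X| \geq C$ and the system is still admissible (in terms of the H\"older norm of $W$ over the window). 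Stage~3 succeeds if, for all $t \geq \sigma^{(2)}_k$, $|\widetilde X_t| \geq \varphi(C, A^\pm(t-\sigma^{(2)}_k)) - (t-\sigma^{(2)}_k)^\alpha$; if so, $\psi_{\eps,\alpha,\kappa}$ is defined as $t_\eps \sigma^{(2)}_{k}$ for this first successful $k$. On any failure, I set $\tau_{k+1}$ to be the first subsequent return of $\widetilde X$ to a small neighbourhood of zero, and restart the loop.

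The crux of the argument, and what I expect to be the main obstacle, is to show that each attempt has a conditional success probability bounded below by a universal $p_0 > 0$. For Stage~2, once the small past part of the noise is absorbed into the drift, Girsanov plus the well-posedness of \cite{CG16} give a pathwise comparison with a pure fBm, for which macroscopic exit of the strip on a unit time scale has positive probability. For Stage~3, the truly delicate step, writing $\widetilde X_t = \varphi(\widetilde X_{\sigma^{(2)}_k}, A^+(t-\sigma^{(2)}_k)) + R_t$, the expanding nature of $\varphi$ (since $\partial_x \varphi > 0$ on $(0,\infty)$) yields a Gronwall-type bound on $R$ in terms of the future noise $W^{>\tau_k}$ and the residual past noise. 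Here the inequality $\alpha < 1/(1-\gamma)$ ensures that the $t^\alpha$ error is dominated by $\varphi(C, A^\pm t)$, while $\alpha > 3\kappa/2 + H$ makes both the H\"older fluctuations of $W^{>\tau_k}$ (of order $t^H$) and the polynomially decaying past residual fit inside this error budget. Positivity of Stage~3 then reduces to a small-ball estimate for an $H$-self-similar Gaussian process to remain above $-\delta t^\alpha$ for all $t \geq 0$, which follows by self-similarity plus a Borell-TIS/Borel--Cantelli argument on dyadic scales.

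To finally deduce \eqref{eq:SubexpIntro}, I would combine (i) the uniform lower bound $p_0$, yielding $\PP(N > n) \leq (1-p_0)^n$ for the number of attempts $N$; (ii) Gaussian tail estimates on $\norm{W}_{\ccc{H};[\tau_{k-1},\tau_k]}$ and hence on each $w_k$; and (iii) the pathwise bound $\psi_{\eps,\alpha,\kappa} \ls t_\eps(N + \sum_{k=1}^{N} w_k)$. Summing a geometric number of sub-Gaussian terms and tuning the exponents $a,b$ produces the announced sub-exponential tail, with $\kappa$ precisely the one constrained by $3\kappa/2 + H < \alpha$.
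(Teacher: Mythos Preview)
Your overall architecture---scaling to $\eps=1$, three stages with Panloup--Richard style waiting times, geometric number of attempts---matches the paper. But two points in your sketch are genuine gaps rather than details to fill in.

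\textbf{Stage 3 for $\gamma \leq 0$.} You write that ``the expanding nature of $\varphi$ (since $\partial_x \varphi > 0$) yields a Gronwall-type bound on $R$''. This is exactly the mechanism that \emph{fails} when $\gamma \leq 0$: there $b'(x)=\gamma x^{\gamma-1}\leq 0$, the flow is contracting rather than expanding, and the Trevisan/Delarue--Flandoli comparison (which is what your Gronwall linearisation amounts to) no longer applies directly. The paper treats the two signs of $\gamma$ by separate deterministic comparison lemmas: for $\gamma>0$ one has $X_u \geq \varphi(X_s-\bar w,u-s)$, while for $\gamma<0$ one can only get $X_u \geq \varphi(X_s+\bar w,u-s)-2\bar w$, and the iteration of this weaker bound over a geometric sequence of times $T_k=\sum q^j$ requires a careful choice of $q$ and of the admissibility constant to keep the accumulated error below $t^\alpha$. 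Your ``small-ball plus Borel--Cantelli on dyadic scales'' is the right shape for controlling the noise, but the deterministic comparison underneath it is the nontrivial part and is not a Gronwall argument.

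\textbf{Stage 2 for $H<1/2$.} You take $\sigma^{(2)}_k=\sigma^{(1)}_k+1$ and check admissibility at that time. When $H<1/2$ the increments of $W^H$ are negatively correlated, so the large positive push you use to exit the strip on $[\sigma^{(1)}_k,\sigma^{(1)}_k+1]$ becomes, once it enters the past, a contribution of the \emph{wrong sign} whose H\"older constant is of order $t_e^{-1/2}$---far too large for your admissibility condition. The paper inserts an additional deterministic waiting time $t^\ast$ after exit, chosen so that this contribution has decayed below the admissibility threshold, and shows that with positive probability the solution stays above $1$ throughout this extra wait. Without this step your Stage 3 would start from a non-admissible state and the comparison with the extremal curve breaks down immediately.

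A smaller point: defining $\tau_{k+1}$ as the first return of $\widetilde X$ to a neighbourhood of zero is risky, since after a Stage~3 failure the solution may be far from zero and this hitting time has no obvious tail control. The paper simply sets $\Delta_j$ to be the (finite) time at which the noise condition in Stage~3 first fails, and restarts from there; no return to zero is needed because the subsequent waiting stage and the monotonicity of the flow handle any starting point.
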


 {
\begin{remark} One can see that the theorem above implies the simplified statement from the introduction (Theorem \ref{thm:main_simplified}), by taking $\nu = (H-\alpha) \frac{1-\gamma}{1+H(\gamma-1)} + 1 = \frac{1+\alpha(\gamma-1)}{1+H(\gamma-1)}$.
\end{remark}}

The theorem above gives a bound away from zero for the solutions. We also have the following upper bound (which is much simpler).

 {
\begin{proposition}\label{prop:upper_bound}
For $\gamma > 0$:
\[ \forall t \in [0,1]\,;-\varphi( 2 \eps \norm{ \wh }_{\infty;[0,1]}; A^- t ) \leq X^{\eps}_t \leq \varphi( 2 \eps \norm{ \wh }_{\infty;[0,1]}, A^+ t ).\]

For $\gamma  \leq 0$:
\[ \forall t \in [0,1],\;- \varphi\left( 0, A^- t \right) - 2 \eps \norm{ \wh }_{\infty;[0,1]}  \leq X^{\eps}_t \leq  \varphi\left( 0, A^+ t \right) + 2 \eps \norm{ \wh }_{\infty;[0,1]} . \]

\end{proposition}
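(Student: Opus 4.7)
The argument naturally splits into two cases depending on the sign of $\gamma$, since the structure of the drift $b$ changes. Write $M := \eps \norm{\wh}_{\infty;[0,1]}$, so that $|\eps W_t| \leq M$ on $[0,1]$. By the obvious sign-flip symmetry it suffices to outline the two upper bounds; the lower bounds follow by applying the same argument to $-X^\eps$ with $A^+$ replaced by $A^-$.

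\textbf{Case $\gamma > 0$.} The plan is a direct comparison. The function $\bar X_t := \varphi(2M, A^+ t)$ solves $\dot{\bar X}=b(\bar X)$ from $\bar X_0 = 2M$ and stays strictly positive. If the claimed upper bound ever failed, the continuity of $X^\eps$ and $\bar X$ together with $X^\eps_0 = 0 < 2M = \bar X_0$ would produce a first time $\tau > 0$ with $X^\eps_\tau = \bar X_\tau$. The crux is that on $[0,\tau)$ the inequality $b(X^\eps_s) \leq b(\bar X_s)$ holds pointwise: trivially when $X^\eps_s \leq 0$ (since then $b(X^\eps_s) \leq 0 < b(\bar X_s)$), and by monotonicity of $x \mapsto A^+ x^\gamma$ on $\RR_+$ when $0 < X^\eps_s < \bar X_s$, which uses $\gamma > 0$. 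Integrating this pointwise inequality then yields
\[ X^\eps_\tau \leq \int_0^\tau b(\bar X_s)\,ds + \eps W_\tau = (\bar X_\tau - 2M) + \eps W_\tau \leq \bar X_\tau - M < \bar X_\tau, \]
contradicting $X^\eps_\tau = \bar X_\tau$.

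\textbf{Case $\gamma \leq 0$.} Here $b$ is decreasing on each half-line, so a direct comparison will not work; instead the plan is to work with $Y_t := X^\eps_t - \eps W_t$. This is continuous with $Y_0 = 0$, and on any open interval where $X^\eps$ is nonzero $Y$ is classically $C^1$ with $\dot Y_t = b(X^\eps_t)$ --- this is precisely the classical-solution property recalled in Subsection \ref{subsec:prelim}. I aim to show $Y_t \leq \varphi(0, A^+ t) + M$, which immediately gives the claimed bound $X^\eps_t \leq \varphi(0, A^+ t) + 2M$. Set $f(t) := Y_t - \varphi(0, A^+ t) - M$, continuous with $f(0) = -M < 0$. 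The key observation is that on the open set $\{f > 0\} \cap \{t > 0\}$ one has $X^\eps_t \geq Y_t - M > \varphi(0, A^+ t) > 0$, so $b(X^\eps_t) = A^+(X^\eps_t)^\gamma$, and the sign condition $\gamma \leq 0$ \emph{reverses} the monotonicity of $x \mapsto x^\gamma$ to yield
\[ \dot f(t) = A^+\bigl((X^\eps_t)^\gamma - \varphi(0, A^+ t)^\gamma\bigr) \leq 0. \]
A standard closure argument then finishes: if $f(t^\ast) > 0$, let $(a,b)$ be the connected component of $\{f > 0\}$ containing $t^\ast$; since $f(0) < 0$ one has $a > 0$ and $f(a) = 0$ by continuity, and integrating $\dot f \leq 0$ from $a$ to $t^\ast$ produces $f(t^\ast) \leq 0$, a contradiction. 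The matching lower bound follows by the same differential-inequality argument applied to $g(t) := Y_t + \varphi(0, A^- t) + M$.

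The only nontrivial part of the plan is the $\gamma \leq 0$ case: what makes it work is the interplay between the sign of $\gamma$ and the choice of $Y = X^\eps - \eps W$, which converts the apparent loss of comparison into a clean one-sided differential inequality. Everything else is elementary. As a side remark, the proof uses only the pointwise bound $|\eps W_t| \leq M$, so it actually applies verbatim to any continuous driving path.
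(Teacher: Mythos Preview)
Your proof is correct and follows essentially the same comparison idea as the paper's: for $\gamma>0$ you compare with the monotone flow (the paper does this via the last zero of $X^\eps$ and Lemma~\ref{lem:positive_comparison}, you do it by a direct first-crossing argument), and for $\gamma\leq 0$ both you and the paper exploit that once $X^\eps$ exceeds $\varphi(0,A^+\,\cdot)$ the decreasing drift $x\mapsto x^{\gamma}$ forces the gap to shrink. The only cosmetic difference is that the paper phrases the $\gamma\leq 0$ case via the \emph{last} time $X^\eps\leq\varphi(0,A^+\,\cdot)$ and integrates forward, whereas you phrase it via a differential inequality for $f=Y-\varphi-M$; these are the same argument.
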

}

\begin{remark}
The point (2) in Theorem \ref{thm:main} says that there is inherent randomness built in the system - namely that due to the possibility of the change of sign in $f$ the system does not converge to one particular path, but randomly picks either of the two extremal solutions.  {This phenomenon has been called ''spontaneous stochasticity'' in the physics literature, see for instance \cite{EB20, TBM20, DG98, Mai12} for a selection of articles on the topic. In a similar context to the problem investigated in the present article, see also the regularised models of blowup, for instance \cite{DM21, DMR20}.
}

In our context, this takes the following form : combining points (1), (2) one can see that the law of $X^{\eps}$ on $C([0,T])$ converges to the following atomic law:

 {
\[ p_+ \delta_{x^+} + p_- \delta_{x^-} \]
}
where $ {x^\pm}$ denote the two extremal solutions of  $\dot{x} =  { f(x) }$ starting from $0$, and the important observation is that both probabilities are strictly positive. In the Brownian case, the exact values of $p_+$ and $p_-$ are known (and depend on $A^\pm$ and $\gamma)$, but here we do not obtain any information other than both $p_+, p_-$ \blue{being} strictly positive  ({of course in the symmetric case $A_+=A_-$, $p_+=p_-=1/2$).}  {We expect that in general these values depend both on $H$ and $\gamma$ (see \cite{Lthesis} for some numerical experiments).} 

 {We also expect that in the fully asymmetric case where one of the coefficients is null, say $A_-=0$, then the limiting law concentrates on the extremal positive solution ($p_+=1)$, but this would require a more refined analysis than that of the present paper. Some partial results have been obtained in \cite{Lthesis} (in particular the convergence is proved when $\gamma > -1$).}
\end{remark}

\begin{remark}
We assume that $\gamma > 1 - \frac{1}{2H}$. In principle the heuristical scaling argument should apply up to the critical threshold $\gamma > 1 - \frac{1}{H}$, as long as one is able to show that, on small time scales, the drift is negligible in front of the noise (this is required in Stage 2 of the proof). However, in our proof, we use this fact in the rather strong form of Gaussian tail estimates (see Lemma \ref{thm:girsanov}), which at the moment seems to only be known in the literature for drifts $b \in C^\alpha$, $\alpha >  1 - \frac{1}{2H}$. 

Note that assuming  $\gamma > 1 - \frac{1}{2H}$ also allows us to use directly the result of \cite{CG16} on strong existence/uniqueness, but this is only a convenience, as the result would apply similarly if one only had weak existence. (However, in analogy with the $H=1/2$ case, one can in fact expect that strong well-posedness holds up to the critical threshold for power-type drifts in scalar equations).
\end{remark}

 {
\begin{remark}\label{rem:distribution_integer_gam}
We have written our assumption on $f$ in terms of its restriction on $\RR \setminus \{0\}$ \eqref{eq:defb1} and an homogeneity property \eqref{eq:defb2}, as these are the properties we use in the proof. We can in fact describe $f$ more explicitly (cf. \cite[Section 3.2]{Hor}). If $\gamma > -1$, then one simply has that $f$ is the (locally integrable) function
\[ f = \left( A_+ 1_{\{x>0\}} - A_-1_{\{x>0\}} \right) |x|^{\gamma}.
\]
If $\gamma < -1$ is not a negative integer, then \eqref{eq:defb1}- \eqref{eq:defb2} still define a unique distribution, namely
\[
f =  \frac{1}{(\gamma +1) \ldots (\gamma +k)} \left(\frac{d}{dx}\right)^{k} \Big(\left( A_+ 1_{\{x>0\}} - A_-1_{\{x<0\}} (-1)^k \right) |x|^{\gamma+k}\Big),
\]
where $\gamma + k >-1$ and the derivation is in the sense of distributions. If $\gamma=-k$ is a negative integer, things are less simple: a \blue{distribution} satisfying  \eqref{eq:defb1}- \eqref{eq:defb2} only exists if $A_+ = A_-$, and is then written in terms of Cauchy principal value and the Dirac mass at $0$ by
\[
f = \frac{A_+}{(-k)\ldots (-1)}  \left(\frac{d}{dx}\right)^{k-1} \left({\rm{p.v.}} \frac{1}{x} + C \delta_0 \right)\]
where $C \in \RR$ is arbitrary. Note that in that case $f$ is not characterized uniquely, which simply means that our results apply to all such $f$ regardless of the value of $C$ (although we may expect that the probabilities $p^{\pm}$ depend on $C$). 

Further note that in all cases, $f$ is smooth away from $\{0\}$, and satisfies a monotonicity property (increasing or decreasing depending on the sign of $\gamma$) on both $(-\infty,0)$ and $(0,\infty)$, which we will use crucially in our analysis. \end{remark}
}

\begin{remark}
Our result gives a sub-exponential rate of concentration around the extremal solutions : for instance, our result implies that, for any fixed $0< \delta < 1$, $T>0$, 
 { \[ \PP\left(  (1-\delta) x^-_T \leq X^\eps_T \leq (1-\delta) x^+_T + \right) \lesssim_{\eps \to 0} \exp\left( - C t_\eps^{-\kappa} \right),\] }
for any
\[ \kappa < \frac{2}{3} { \left(\frac{1}{1-\gamma} \land 1 - H \right) }. \]

We do not, however, expect this rate to be optimal. In the Markovian case ($H=1/2$ and $0 \leq \gamma \leq 1$), it is easy to see, using the scaling argument of \cite{DF14,PP18}, that one can take $\kappa =1$. This also follows from \cite{GHR01} (where they in addition obtain an actual logarithmic equivalent of this probability), since  $t_\eps^{-1}$ coincides with their (unusual) large deviation rate.

It would be interesting to obtain more precise concentration estimates in the fractional case. This is related to the (difficult) problem of speed of convergence in the long time behaviour of fractional SDE. Note that, in this context, it is known that the construction of Panloup-Richard \cite{PR20} (on which we build) leads to suboptimal (sub-exponential) rates. Indeed, under similar assumptions as them, Li-Sieber \cite{LS22} prove exponential rates of convergence (however, their methods do not seem applicable to the equation that we consider in this paper).
\end{remark}

Recall the transition point $(t_{\eps}, x_{\eps})$ :

\[ t_{\eps} = \eps^{\frac{1-\gamma}{1-H(1-\gamma)}}, \;\;\;\;\;\;\;x_{\eps}=  \eps t_{\eps}^H = t_{\eps}^{\frac{1}{1-\gamma}}.\]

As a consequence:
\begin{equation}\label{eq:transpoint}
\eps t_{\eps}^H x_{\eps}^{-1} = t_{\eps} x_{\eps}^{\gamma-1} = 1
\end{equation}

Before we start with a proof of the main theorem, we will give a brief proof of the scaling property. This will allow to vastly simplify the computations, because from now on we can work with $\eps = 1$, only later rescaling back to show our main claim - making the small noise limit \textsl{de facto} a corollary of our analysis of a long time behaviour of the system. 

This also means that in Section \ref{sec:main_section} we will show the construction of $\psi_{1,\alpha,\kappa}$, which we will denote simply $\psi_{\alpha,\kappa}$.

\begin{proposition} \label{prop:scaling}
For all $\eps > 0$ there holds:

\[ (X^{\eps}_{t t_{\eps}})_{t \geq 0} =_{\PP} (x_{\eps} X^1_t)_{t \geq 0} \]
\end{proposition}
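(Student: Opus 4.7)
The plan is to define $Y_t := x_\eps^{-1} X^\eps_{t t_\eps}$ and verify that $Y$ satisfies (in law) the same SDE as $X^1$, which by strong well-posedness (from \cite{CG16}) yields the result. The three ingredients to combine are: the $\gamma$-homogeneity of $b$ in \eqref{eq:defb2}, the self-similarity of $W^H$ (namely $(W_{ct})_{t\geq 0} =_{\PP} (c^H W_t)_{t\geq 0}$ for any $c>0$), and the identity $t_\eps x_\eps^{\gamma-1} = 1$ recorded in \eqref{eq:transpoint}.

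First I would work formally assuming $b$ is smooth. Changing variables $s = u t_\eps$ in the drift integral gives
\[
X^\eps_{t t_\eps} = \int_0^{t t_\eps} b(X^\eps_s)\,ds + \eps W_{t t_\eps} = t_\eps \int_0^t b(x_\eps Y_u)\,du + \eps W_{t t_\eps}.
\]
Using $b(x_\eps \cdot) = x_\eps^\gamma b(\cdot)$ and dividing both sides by $x_\eps$ gives
\[
Y_t = (t_\eps x_\eps^{\gamma-1}) \int_0^t b(Y_u)\,du + \tfrac{\eps}{x_\eps} W_{t t_\eps} = \int_0^t b(Y_u)\,du + \widetilde W_t,
\]
where $\widetilde W_t := (\eps/x_\eps) W_{t t_\eps}$. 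By fBm self-similarity and the definition $x_\eps = \eps t_\eps^H$, the process $\widetilde W$ has the same law as $W$. Thus $Y$ satisfies the same equation as $X^1$ driven by a fBm, and uniqueness in law gives $Y =_{\PP} X^1$.

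To make this rigorous when $\gamma \leq 0$, where $b$ must be interpreted distributionally, I would approximate $b$ by the sequence $b^n$ alluded to just after \eqref{eq:SDEmain}. The key observation is that if $b \in C^\gamma$ is $\gamma$-homogeneous, one may choose the mollifiers so that each $b^n$ is also (approximately) $\gamma$-homogeneous, or more simply, set $b^n_\eps(x) := x_\eps^{-\gamma} b^n(x_\eps x)$ so that the scaling identity $b^n_\eps(\lambda \cdot) = \lambda^\gamma b^n_\eps(\cdot)|_{\lambda=x_\eps}$ used above becomes an exact equality at the smooth level, and $b^n_\eps \to b$ in $C^\gamma$. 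Passing to the limit on both sides of the equation using the convergence in the definition of $X^\eps$ then yields the claim.

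The only mild obstacle is the bookkeeping around the distributional interpretation of the drift; once $b$ is smooth, the argument is a one-line change of variables together with the identity $t_\eps x_\eps^{\gamma-1}=1$. Everything else — the self-similarity of the fBm and uniqueness in law — is standard.
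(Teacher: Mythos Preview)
Your argument is correct and is essentially the same as the paper's: both combine the self-similarity of $W^H$, the $\gamma$-homogeneity of $b$, and the identity $t_\eps x_\eps^{\gamma-1}=1$ from \eqref{eq:transpoint} to show that the rescaled process solves the $\eps=1$ equation, then invoke well-posedness. Your version is somewhat more detailed (working with integrals, treating the distributional case via approximation, and naming the uniqueness step explicitly), but the route is identical.
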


\begin{proof}
By standard scaling properties of $\wh$:
 {
\begin{equation}\label{eq:rescaled}
d X^{\eps}_{t t_{\eps}} =_{\PP} t_{\eps} f(X^{\eps}_{t t_{\eps}} ) dt + \eps t_{\eps}^H d\wh_t 
\end{equation}
}

Using perfect scaling of $f$ and \eqref{eq:transpoint} we get:

\[ x_{\eps}^{-1} t_{\eps} f(X^{\eps}_{t t_{\eps}} ) = f(x_{\eps}^{-1} X^{\eps}_{t t_{\eps}} ) \]

The claim follows now by multiplying \eqref{eq:rescaled} with $x_{\eps}^{-1}$. 
\end{proof}

\subsection{Preliminaries}\label{subsec:prelim}

We consider equations of the form
\begin{equation} \label{eq:SDE}
X_t = x + \int_0^t  { f(X_r) } dr + w_t,
\end{equation}
where $f$ is possibly irregular. We first define the notion of solution that we use in the case where $f$ is not a continuous function (following \cite{CG16,GG20}).

For a given continuous path $w:[0,T] \to \RR$, and Schwartz distribution $f \in \mathcal{D}'(\RR)$, the averaged field $ { T^w f } \in C([0,T], \mathcal{D}'(\RR))$ is defined by 
\[  { T^w f }(t,\cdot) = \int_0^t f(\cdot + w_s) ds, \]

Assuming that $ { T^w f }$ is in $C_t^{\theta} C_x^{1}$ for some $\theta>\frac{1}{2}$, we say that $X:[0,T]\to \RR$ is a (path-by-path) solution to \eqref{eq:SDE} if $X_{\cdot}-w_{\cdot}$ is in $C^{\theta'}$ for some $\theta'>\frac 1 2$, and it holds that
\[ 
\forall t \geq 0, \;\; X_t =  x + \int_0^t (T^w f)(ds, X_s- w_s) + w_t ,
\]
where the integral is understood in the nonlinear Young sense.

We record here the well-posedness result that we will use. (Since our strategy of proof relies on restarting the equation in each step, we need to add an additive perturbation which is measurable w.r.t. the past, which does not create any difficulty).


\begin{lemma}\label{thm:girsanov}
Let $(B_t)_{t \geq 0}$ a Brownian motion adapted to a filtration $(\mathcal{F}_t)_{t \geq 0}$, and for $0<H<1$ let $N_t = \int_0^t (t-r)^{H-1/2} dB_r$, $t\geq 0$, and let $(P_t)_{t \geq 0}$ be a $\FF_0$-measurable path. Also fix $ { f \in C^{\gamma}(\RR)}$ with $\gamma > 1 - \frac{1}{2H}$. Then, a.s., $ {T^{N+P} f} \in C_t^{\theta} C_x^{1}$ for some $\theta> \frac{1}{2}$, and for any $x \in \RR$, the equation given by:

\[ X_t = x + \int_0^t  { f(X_r) } dr + N_t + P_t \]

has a unique path-by-path solution.  In addition, $(X_t)_{t \geq 0}$ is an $(\mathcal{F}_t)$-adapted process, and satisfies :

\[ \forall \lambda > 0\;\;  { \sup_{\norm{ g }_{\ccc{\gamma}} \leq 1}} \EE^{\FF_0}   \left[ \exp\left( \lambda  \norm{ \int_0^{\cdot} { g(X_r) } dr }_{\ccc{1+H\gamma}}^2 \right) \right] \leq C_\lambda, \]
where $C_{\lambda}$ does not depend on $P$.
\end{lemma}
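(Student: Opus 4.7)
The plan is to combine the well-posedness theory of Catellier--Gubinelli \cite{CG16} with a stochastic-sewing/Gaussian-concentration argument to obtain the uniform exponential estimate. First I condition on $\FF_0$ so that the path $P$ becomes deterministic; the driving noise is then $\tilde{W}:= N+P$, a Gaussian process whose centered component $N$ is a Volterra--type fractional Brownian motion with the same local Gaussian behaviour as standard fBm (i.e.\ $\mathrm{Var}(N_t - \EE^{\FF_s}N_t) \simeq (t-s)^{2H}$ for $s<t$). Under the standing condition $\gamma > 1 - \tfrac{1}{2H}$, Gaussian density estimates on $N_r - \EE^{\FF_s} N_r$ together with stochastic sewing yield that the averaged field $T^{\tilde{W}}b(t,\cdot)=\int_0^t b(\cdot + \tilde{W}_r)\,dr$ belongs to $\ccc{\theta}_t\ccc{1}_x$ for some $\theta > 1/2$, with a bound depending only on $\|b\|_{\ccc{\gamma}}$ and the Gaussian covariance structure of $N$, and not on the deterministic shift $P$. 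The Catellier--Gubinelli fixed point then gives existence and uniqueness of a path-by-path solution $X$, and adaptedness follows since $N_t$ is $\FF_t$-measurable while $P$ is $\FF_0$-measurable.

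For the exponential estimate, the core step is the $L^p$-bound
\[
\left\Vert \int_s^t f(X_r)\, dr \right\Vert_{L^p(\PP^{\FF_0})} \ls \sqrt{p}\,\|f\|_{\ccc{\gamma}}\,(t-s)^{1+H\gamma},
\]
valid uniformly in $s<t$, in $p\geq 2$ and in $P$. This is obtained by applying the stochastic sewing lemma (\cite{Le20}) to the germ $A_{s,t} = \EE^{\FF_s}\int_s^t f(X_r)\,dr$: the two germ bounds required by sewing reduce to Gaussian density estimates $\bigl\|\EE^{\FF_s} f(X_r)\bigr\|_{\ccc{\gamma}} \ls \|f\|_{\ccc{\gamma}}(r-s)^{H\gamma}$ and its two-point version, both of which depend only on the conditional Gaussian law of $N$ given $\FF_s$ (the shift $P$ and the drift contribution $\int_s^r b(X_u)du$, being $(r-s)$-Lipschitz on small intervals, are absorbed into the Gaussian mean and do not spoil the estimate). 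A Kolmogorov-type argument then upgrades this pointwise bound to a Hölder bound on $\int_0^\cdot f(X_r)\,dr$ in $\ccc{1+H\gamma}$, and the $\sqrt{p}$-dependence transforms via the standard Gaussian-moment criterion into an exponential moment $\EE^{\FF_0}\exp(\lambda\,\|\int_0^\cdot f(X_r)dr\|_{\ccc{1+H\gamma}}^2/\|f\|_{\ccc{\gamma}}^2) \leq C_\lambda$ for each individual $f$.

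To move the supremum $\sup_{\|f\|_{\ccc{\gamma}}\leq 1}$ inside the expectation, I would use the Besov representation $C^{\gamma} = B^{\gamma}_{\infty,\infty}$: writing $f = \sum_j \Delta_j f$ with Littlewood--Paley blocks obeying $\|\Delta_j f\|_\infty \ls 2^{-j\gamma}\|f\|_{\ccc{\gamma}}$, the stochastic sewing estimate can be re-run block by block, and each block contributes a quantity whose Hölder norm has Gaussian tails with a summable prefactor. Summing the Gaussian exponentials with a geometric weight (via Cauchy--Schwarz in an appropriate dyadic pairing) yields the desired uniform bound $C_\lambda$, with no dependence on $P$ because every estimate at the block level depends only on the covariance of $N$.

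The hard part is precisely this last step: interchanging the supremum over an infinite-dimensional ball in $\ccc{\gamma}$ with an exponential moment. A naive union bound over a countable dense family is not enough because the exponential concentration constant must beat the metric entropy of the unit ball of $\ccc{\gamma}$. The Littlewood--Paley decomposition circumvents this by reducing the problem to a sum of scalar Gaussian concentration statements with explicitly controlled prefactors, but one has to verify that the sewing constants at each frequency are compatible with the $\gamma$-weighting, and that the cross-terms between blocks (which arise because $X$ is a common nonlinear functional of $N$) can be absorbed. This compatibility is what forces the restriction $\gamma > 1 - \tfrac{1}{2H}$, the same threshold needed for the Catellier--Gubinelli theory to close.
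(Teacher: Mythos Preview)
Your proposal reproduces the one observation that the paper's proof actually contains: since $P$ is $\FF_0$-measurable, the conditional variance satisfies $\V^{\FF_s}(N_{s,t}+P_{s,t}) = \V^{\FF_s}(N_{s,t}) \gtrsim (t-s)^{2H}$, so $N+P$ is locally non-deterministic with the same constants as $N$, uniformly in $P$. From there the paper simply invokes \cite{CG16,GG20,GHM22}; your sketch is an attempt to unpack those references.

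Where you diverge from the cited literature is in handling the supremum over $f$. You propose a Littlewood--Paley decomposition with block-by-block Gaussian concentration, and correctly flag the cross-terms as the delicate point. The references avoid this entirely by exploiting linearity: the map $f\mapsto \int_0^\cdot f(X_r)\,dr$ factors as the \emph{linear} map $f\mapsto T^{\tilde W}f$ followed by a nonlinear Young integral along the $\ccc{>1/2}$ path $\theta=X-\tilde W$. Hence it suffices to bound the operator norm of $T^{\tilde W}:\ccc{\gamma}\to C^\theta_t C^1_x$, which by duality is a Besov-type norm of the occupation measure of $\tilde W$ --- a \emph{single} random variable. Its Gaussian tails follow from one application of stochastic sewing (with germs valued in a function space of $x$), and the constants depend only on the local non-determinism lower bound, hence not on $P$. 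No decomposition of $f$ is needed, and there are no cross-terms.

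A smaller issue: in your germ $A_{s,t}=\EE^{\FF_s}\int_s^t f(X_r)\,dr$, the drift contribution $\int_s^r b(X_u)\,du$ is not $\FF_s$-measurable and cannot simply be ``absorbed into the Gaussian mean''. One either freezes the drift at time $s$ in the germ and controls the resulting sewing defect via the Lipschitz-in-time bound on $\theta$, or bypasses the problem by working with $T^{\tilde W}f$ directly as above.
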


\begin{proof}
It suffices to show that $t \mapsto N_t + P_t$ is locally non-deterministic:

\begin{equation}\label{eq:non_determinism}
\forall s<t\;\;\text{Var}^{\FF_s} \left( N_{st} + P_{st} \right) \geq  \EE^{\FF_s} N_{st}^2  = \EE^{\FF_s} \left( \int_s^t (t-r)^{H-1/2} dB_r \right)^2 \gtrsim (t-s)^{2H} 
\end{equation}

The proof then follows by the same arguments as in \cite{GHM22, GG20, CG16}.
\end{proof}

Before we set out to describe the dynamics in each of the three stages, let us gather in one place the parameters involved. The reader is invited to ignore this part during the first reading and come back to check the definition of a constant if need be. 
%
%
%
%
%



\begin{definition}\label{def:stage2_constants}
 {Let $A = \text{max}(A^+, A^-)$}. Define $t_e, c_{A,f} \in (0,1),  {t^* > 1}$, $U > 2, \vartheta \in (0,2), C_W \geq 1, \beta > H$ and $\delta >0$ to be constants satisfying the following relations:

\begin{equation}\label{eq:ca_tstar}
c_{A,f} \leq (1+t^*)^{H+\delta-1} \quad t^* = \left( U t_e^{-1/2-\delta} \right)^{\vartheta} \quad (t^*)^{10\delta/H} < 2
\end{equation}

\begin{equation}\label{eq:teu}
\max( \left(U t_e^{-1/2-\delta} \right)^{1+\vartheta(H-1/2-\beta)}, U^{-\vartheta(1/2-3\delta)} (t^*)^{-3\delta(1/2+\delta)},  c_{A,f} ) < K_A/3 
\end{equation}

\begin{equation}\label{eq:vartheta_lower}
M^*(t_e^{-1}, U) := t_e^{-(1/2+\delta)(1+\vartheta(H-1/2))} U^{\vartheta(H-1/2)} - t_e^H > 5^{\inv{\alpha(1-\gamma)}}
\end{equation}

\begin{equation}\label{eq:girsanov_proba}
C_B 12 (U-1) t_e^{-1/2} \exp\left( - t_e^{-1-2\delta} \right) - C_G \exp\left( - 9 t_e^{-1 - 2(1/2+H(\gamma-1))} \right) > 0 
\end{equation}

\begin{equation}\label{eq:escape_te}
(1 \lor A) t_e^{H \land 1/2+\delta} \leq 1/4 \land 8^{-\gamma} 
\end{equation}

\begin{equation}\label{eq:fixed_five}
 \begin{cases} q \in (2^{\inv{\alpha}}, 5^{\inv{\alpha}} ) & \gamma < 0 \\ q \in (1,1+\inv{2m_{\gamma}}) & \gamma > 0 \end{cases}
 \end{equation}
 
 where $m_{\gamma}$ is some positive constant depending only on $\gamma$ (see the proof of Lemma \ref{lem:curve_gam_pos})

\begin{equation}\label{eq:fixed_four}
 {K_A} = q^{-\alpha} \frac{5^{-\inv{\alpha(1-\gamma)}} \land A^{\alpha} \left( 1 + m_{\alpha,\gamma} \right)^{\inv{1-\gamma}}}{3} \quad m_{\alpha,\gamma} = \frac{\alpha(1-\gamma)}{1+\alpha(\gamma-1)} 
\end{equation}

\begin{equation}\label{eq:ell}
K_{\text{waiting}} C_W^{-\kappa+\delta} \leq  c_{A,f} 
 \end{equation}

where $C_G$ is the constant from Theorem \ref{thm:girsanov}  {with $\lambda = 1$}, $K_{\text{waiting}}$ is a constant from Lemma \ref{lem:failed_waitings_uniform} and $C_B$ is a constant that is chosen uniformly with respect to all the others in this Proposition (see the proof of Lemma \ref{lem:getting_out_proba}). 
We observe that we will always have $K_A <1$. 

\end{definition}

\begin{proposition}\label{prop:stage2_constants}
The combination of constants from Definition \ref{def:stage2_constants} exists.
\end{proposition}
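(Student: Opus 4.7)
The plan is to choose the constants in Definition \ref{def:stage2_constants} in a specific order, so that at each step the relevant inequality reduces to a strict comparison in a parameter not yet fixed, which can then be satisfied by sending it to zero or infinity.

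First I would take $\sigma, \xi, \theta, \mu_g$ from Proposition \ref{prop:fixed_constants} (so $\sigma > H$), fix $\beta \in (H, \sigma]$, and pick $\vartheta \in (0,2)$ satisfying $\vartheta(1/2 + \beta - H) > 1$. Such $\vartheta$ exists precisely because $\beta > H$ forces $1/(1/2 + \beta - H) < 2$. This choice makes the exponent $1 + \vartheta(H - 1/2 - \beta)$ of the first term in \eqref{eq:teu} strictly negative, so that term goes to $0$ as $U t_e^{-1/2-\delta} \to \infty$. A short algebraic check gives $1 + \vartheta(H - 1/2) > 2H > 0$, so the factor $t_e^{-(1/2+\delta)(1 + \vartheta(H-1/2))}$ in \eqref{eq:vartheta_lower} blows up as $t_e \to 0$, with enough room to dominate the $U^{\vartheta(H-1/2)}$ correction and the subtracted $t_e^H$.

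Next I would fix $\delta > 0$ small enough to satisfy simultaneously $H+\delta < \sigma$, $\sigma - H - 2\delta > 0$ (keeping \eqref{eq:xikappa} valid), and $H(1-\gamma) + \delta < 1/2$, which is possible since $\gamma > 1 - 1/(2H)$ gives $H(1-\gamma) < 1/2$. The last of these is what makes \eqref{eq:girsanov_proba} work: it implies $-2 + 2H(1-\gamma) < -1 - 2\delta$, so the Girsanov tail $\exp(-9\lambda t_e^{-2+2H(1-\gamma)})$ decays strictly faster than $\exp(-t_e^{-1-2\delta})$, leaving a positive leftover for $t_e$ small. I would then couple $U = t_e^{-p}$ for a small $p>0$ to be tuned and let $t_e \to 0$. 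The resulting $t^* = (U t_e^{-1/2-\delta})^\vartheta$ is itself a power of $t_e$, and the constraint $(t^*)^{10\delta/H} < 2$ gives $t^* \leq 2^{H/(10\delta)}$, which is comfortably large once $\delta$ is small. Under these choices, \eqref{eq:teu}, \eqref{eq:vartheta_lower}, \eqref{eq:escape_te}, and \eqref{eq:girsanov_proba} all reduce to strict power-law comparisons in $t_e$ and hold for $t_e$ small enough provided $p$ is properly balanced. Then $c_{A,f}$ is chosen small enough for \eqref{eq:ca_tstar} and the last entry of \eqref{eq:teu}; $q$ is picked in the nonempty interval \eqref{eq:fixed_five}; and finally $C_W \geq 1$ is taken large so that $C_W^{-\ell} \leq c_{A,f}/K(\theta,H,\sigma,\delta)$, possible since $\ell = \ell(\xi, \mu_g, \sigma) > 0$.

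The main obstacle is the tension between \eqref{eq:teu} and \eqref{eq:vartheta_lower} when $H < 1/2$: the former pushes $U$ upward (to kill $U^{-\vartheta(1/2-3\delta)}$) while the latter penalises large $U$ through the decreasing factor $U^{\vartheta(H-1/2)}$. This is overcome by choosing the exponent $p$ small enough that the $U$-growth used for \eqref{eq:teu} remains negligible inside the $t_e^{-(1/2+\delta)(1+\vartheta(H-1/2))}$-blow-up of \eqref{eq:vartheta_lower}; the strict inequality $\vartheta(1/2+\beta-H) > 1$ built into the choice of $\vartheta$ provides exactly the slack needed. The constant $C_B$ in \eqref{eq:girsanov_proba} is universal (fixed uniformly with respect to the others, as stated), so it does not feed back into the construction, closing the loop.
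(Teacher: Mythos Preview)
Your proposal is correct and follows essentially the same route as the paper: fix the structural parameters $\beta,\vartheta$ so the relevant exponents in \eqref{eq:teu} and \eqref{eq:vartheta_lower} have the right sign, pick $\delta$ small enough for the Girsanov comparison \eqref{eq:girsanov_proba}, then take $t_e$ small. The only tactical difference is that the paper keeps $U$ a fixed constant (explicitly $U=(3/2)^{2/\vartheta}$) rather than coupling $U=t_e^{-p}$; this decoupling makes the final adjustment of $\delta$ for $(t^*)^{10\delta/H}<2$ slightly cleaner, since $t^*$ then stays bounded as $\delta\to 0$, whereas in your version you implicitly need the $t_e$-threshold for the other constraints to be uniform in small $\delta$ before shrinking $\delta$ at the end.
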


\begin{proof}
See Section \ref{sec:technical}.
\end{proof}

\section{Construction of $\psi_{\alpha,\kappa}$}\label{sec:main_section}

Before we proceed with the construction, let us start with some preliminary notions. First we decompose the noise $\wh$ as follows, for any $s \leq t$ :

\begin{equation}\label{eq:new_past}
\wh_t - \wh_s = \int_s^t (t-r)^{H-1/2} dB_r + \int_{-\infty}^s ((t-r)^{H-1/2} - (s-r)^{H-1/2}) dB_r =: N_t + P_t .
\end{equation}

Define the kernel $G$ and compute its derivative with respect to time variable of the integrator.
\begin{equation}\label{eq:kernel}
\begin{aligned}
G(u,s,r) & = (s+u-r)^{H-1/2} - (s-r)^{H-1/2}, \\ \partial_r G(u,s,r) & = (1/2-H) \left( (s+u-r)^{H-3/2} - (s-r)^{H-3/2} \right).
\end{aligned}
\end{equation}

If we consider the increment $\wh_t - \wh_s$ for some $s<t$, then the past influence of the past sampled between $(u,v) \subset (-\infty, s)$ will be denoted as:
\begin{equation}
P_{t-s}^{(u,v),s} = \int_u^v G(t-s,s,r) dB_r.
\end{equation} We will often drop the second part in the upper index from the notation, assuming that $s=0$ - as shown in Lemma \ref{lem:failed_waitings_uniform} this dependence decays in time. 

We define the following condition to assure that the fluctuations of the past noise are small enough.  { In Section \ref{subsec:stage_three} we will show }that there exists a sequence of stopping times such that this is satisfied. 

\begin{definition}\label{def:admissibility}
 {Let $\alpha$ be as in Theorem \ref{thm:main}.} We will say that the admissible condition is satisfied with constants $c_{A,f}, c_{A,c}>0$ at time $\tau\geq 0$  if

\[ \sup_{t >0 } \sup_{s > 0} \frac{ P_s^{(-\infty,\tau-1),\tau+t} (1+t)^{ { (1-\alpha) \lor 0} }}{ s } \leq c_{A,f}, \quad \sup_{t > 0} \sup_{s>0} \frac{ P_s^{(\tau-1,\tau),\tau+t} }{ s^{H-\delta} } \leq c_{A,c} .\]

We will refer to the first bound above as ''admissibility of the remote past'', and to the second one as ''admissibility of the recent past''.

We will also denote that this condition is satisfied by writing  ''$P^{(-\infty,\tau)} \in \mathbf{A}( {c_{A,f},c_{A,c}})$''. 
\end{definition}

We also introduce the long term and short term past norms. One can show that they have Gaussian tails in all situations of our interest, that is $U<T$ are real-valued random variables, measurable with respect to $\FF_U$, see  {Proposition} \ref{prop:gaussian_tails_norms}.  {(Recall that the parameter $\delta > 0$ has been fixed in Definition \ref{def:stage2_constants})}.

\begin{definition}\label{def:long_short}
Let $0 < U < T \in \RR_+$ with $\absv{T - U } > 1$. We define long and short term norm as:

\[ L(U,T) = \sup_{r \in [U,T] } \frac{ \absv{B_r - B_T} }{\left(1 + T-r \right)^{1/2+\delta}} \quad S(T-1,T) = \sup_{r \in [T-1,T]} \frac{ \absv{B_r - B_T} }{ \absv{T - r }^{1/2-\delta} } .\]

\end{definition}

Now we can describe the construction of the random time $\psi_{\alpha,\kappa}$. We define it using a sequence of random variables $\Delta_j$ with known decay, defined precisely below \eqref{eq:kstar}. 

We set: 
\begin{equation}\label{eq:rho_definition}
\rho_{-1} = -\infty ,\quad \rho_0 = 0, \quad \forall k \geq 1,  \;\rho_k = \sum_{j=0}^{k-1} \Delta_j  
\end{equation}
and
\begin{equation} \label{eq:psi_definition}
\psi_{\alpha,\kappa} = \rho_{k^*} + 1+t^* 
\end{equation}
where $t^\ast$ is fixed above and $k^*$ is defined as:

\begin{equation}\label{eq:kstar}
k^* = \sup\{ k \in \NN: \Delta_j < \infty \}
\end{equation}

Before defining $\Delta_j$ let us define the following events: 

\begin{equation}\label{eq:stagetwo_events}
A_1^j = \{ P_{\cdot}^{(-\infty,\rho_j)} \in \mathbf{A}(c_{A,f}, 1-c_{A,f}) \} \quad A_2^j = \{ \absv{X_{\rho_j + 1 + t^*}} > 1, P^{(-\infty,\rho_j + 1 +t^*)}_{\cdot} \in \mathbf{A}(K_A, K_A) \}
\end{equation}

where  {$t^*$ is the fixed deterministic time that has been defined in Definition \ref{def:stage2_constants} and $c_{A,f}, K_A$ are some admissibility constants also defined there.}
\begin{definition}\label{def:deltas}
Let $C_W \geq 1$ be a parameter fixed in Definition \ref{def:stage2_constants}. We define $\Delta_0$ as: 
\begin{equation}\label{eq:delta_before_zero}
\Delta_0 = C_W+  L(-\infty,0)^{ {\frac{2}{\kappa}}}.
\end{equation}

For $j \geq 1$ we define $\Delta^1_j, \Delta^2_j, \Delta^3_j$ as follows: 

\begin{equation}\label{eq:delta1_def}
 \Delta^1_j = C_W j^{\inv{\kappa}-1}  + L(\rho_j-\Delta_{j-1},\rho_j)^{\frac{2}{\kappa}}
 \end{equation}
 
and 
 
\begin{equation}\label{eq:delta2_def}
\Delta^2_j = (1+t^*) \indic{ A_1^j }  ,
\end{equation} 
with $A_1^j$ as in \eqref{eq:stagetwo_events} and $t^*$ fixed above.

For any $j \geq 1$ we also define $\tau_{k,j}$ as $\tau_{k,j} = \rho_{j-1} + \Delta^1_j + \Delta^2_j + \sum_{m=1}^k q^m$ and then define $k_j^f$:

 {
\begin{equation}\label{eq:kjf_def}
k_j^f = \inf\{ k \in \NN: \norm{ B }_{\ccc{1/2-\delta};[\tau_{k,j},\tau_{k+1,j}]} > q^{k(\alpha-H)} \} 
\end{equation}
and 
}
\begin{equation} \label{eq:delta3_def}
\Delta^3_j = \indic{A_1^j \cap A_2^j} \sum_{m=1}^{k^f_j} q^m .
\end{equation}
%
%

 {
Finally we can define $\Delta_j, j \geq 1$ by
\[ \Delta_j = \Delta^1_j + \Delta^2_j + \Delta^3_j. \]
}

\end{definition}

Respectively the three $\Delta^i_j, i=1,2,3$ correspond to the following stages described in the introduction:

\begin{enumerate}
\item Time needed to put the system back into admissible condition
\item Time needed to get out of the critical strip
\item Time spent above the extremal curve.
\end{enumerate}

In the next three Subsections, we check that, at each step, the conditional probability of succeeding is bounded from below. We then obtain tail estimates on the $\Delta_j$ in Section \ref{sec:proof_main}, which allow us to conclude the proof of Theorem \ref{thm:main}.

 {
\begin{remark}
The choice of exponents in $L_j^{\frac{2}{\kappa}}, j^{\inv{\kappa}-1}$ becomes apparent later on in Proposition \ref{prop:tails_of_waiting_time}. In short, they are chosen in order to obtain a geometric bound on the waiting time, i.e. $\EE \exp\left( b \sum_{j=1}^k \Delta_j^{\kappa} \right) \leq (1+bC)^k$ for some constants $b,C > 0$. The parameter $\kappa$ is chosen as a result of interpolating different factors in waiting time, details of this computation are given in Lemma \ref{lem:failed_waitings_uniform}. The parameter $\alpha$ is then picked so that the past process does not grow faster than the flow of ODE, see Lemma \ref{lem:curve_gam_neg} and \ref{lem:curve_gam_pos}.
\end{remark}
}

\subsection{Stage 1}\label{subsec:stage_one}

The purpose of this Section is to prove the proposition below, which states that after Stage 1 the system is in an admissible state with positive probability (recall that $P$, $\mathbf{A}$ are defined in Definition \ref{def:admissibility}).

\begin{proposition}\label{prop:admissibility_probability}
Take $(\rho_j)_{j \in \NN}$ as in \eqref{eq:rho_definition}. Let $(\Delta_{j+1}^1)_{j \in \NN}$ be as defined in \eqref{eq:delta1_def}, $c_{A,f}$ as in Definition \ref{def:stage2_constants}. Then there exists $\lambda_1 > 0$ such that:

\[ \inf_{j \in \NN} \PP^{\FF_{\rho_j}}\left( P^{(-\infty,\rho_j+\Delta_{j+1}^1)} \in \mathbf{A}(c_{A,f}, 1-c_{A,f}) \right) \geq \lambda_1. \]
\end{proposition}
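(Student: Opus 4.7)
The plan is to condition on $\FF_{\rho_j}$ and set $\tau := \rho_j + \Delta^1_{j+1}$, the time at which this Stage 1 wait ends. I then need to show both halves of the admissibility condition at $\tau$ hold simultaneously with conditional probability $\geq \lambda_1 > 0$, uniformly in $j$. The key structural fact I would exploit is $\Delta^1_{j+1} \geq C_W \geq 1$ coming from \eqref{eq:delta1_def}, which places the near-past interval $[\tau-1,\tau]$ entirely inside $[\rho_j,\tau]$ and lets me decouple the two admissibility bounds into pieces involving essentially disjoint increments of $B$.

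For the recent-past bound, $\sup_{s,t>0} s^{-(H-\delta)} |P_s^{[\tau-1,\tau],\tau+t}|$ is a measurable functional of $(B_r - B_\tau)_{r \in [\tau-1,\tau]}$ alone, which is independent of $\FF_{\rho_j}$ and whose law does not depend on $\tau$. The Gaussian tail estimate from Proposition \ref{prop:gaussian_tails_norms} (combined with a Borell--TIS bound for the $(s,t)$-supremum) then gives that the conditional probability of this quantity being $\leq 1$ is bounded below by a $j$-independent constant $p_1 > 0$.

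For the remote-past bound, I would split the integration domain at $\rho_j$:
\[
P_s^{(-\infty,\tau-1],\tau+t} \;=\; P_s^{(-\infty,\rho_j],\tau+t} + P_s^{[\rho_j,\tau-1],\tau+t}.
\]
The second summand only sees $B$ on $[\rho_j,\tau-1]$ and is independent of $\FF_{\rho_j}$; after normalising by $s^\sigma (1+t)^{-(\sigma-H-2\delta)\xi}$ its supremum can be forced below $c_{A,f}/2$ on an event of uniform positive probability by the same Gaussian concentration together with standard estimates on the kernel $G$. The first summand is $\FF_{\rho_j}$-measurable, so the whole point here is a pathwise bound. This is what Lemma \ref{lem:failed_waitings_uniform} delivers: using the Panloup--Richard-style construction of $\Delta^1_{j+1}$, whose deterministic piece $C_W(j+1)^\theta$ together with the random piece $L(\cdot,\cdot)^{\mu_g}$ controls the Brownian growth on the immediately preceding interval, one obtains a uniform estimate of the form $K(\theta,H,\sigma,\delta) C_W^{-\ell}$, which is $\leq c_{A,f}/2$ by the choice of $\ell$ in \eqref{eq:ell}. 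Intersecting the three good events and using that they live on conditionally independent portions of $B$ then yields $\lambda_1$.

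The main obstacle, as in all arguments of this type, is the uniform-in-$j$ control of the first summand $P_s^{(-\infty,\rho_j],\tau+t}$: the noise from each earlier step $k$ contributes to the integral through the decaying kernel $G$, and making the sum over $k$ summable requires the waiting times $\Delta^1_k$ to grow in a precisely calibrated way. This is exactly what the relations between $\xi$, $\theta$, $\mu_g$ and $\sigma$ in Proposition \ref{prop:fixed_constants} are designed to ensure, and essentially all of the technical content of the proposition is shifted into Lemma \ref{lem:failed_waitings_uniform}; the rest of the proof is Gaussian concentration and stationarity of Brownian increments.
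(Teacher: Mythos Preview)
Your proposal is correct and follows essentially the same route as the paper: the three-part decomposition of $P^{(-\infty,\tau)}$ at $\rho_j$ and $\tau-1$, the pathwise bound on the oldest piece via Lemma \ref{lem:failed_waitings_uniform} (yielding $\leq c_{A,f}/2$ by the choice of $C_W$ in \eqref{eq:ell}), and Gaussian concentration for the two fresh pieces. The paper makes the last step slightly more explicit by invoking Lemma \ref{lem:recent_past} to reduce the $[\rho_j,\tau-1]$ and $[\tau-1,\tau]$ contributions to the norms $L(\rho_j,\tau_j-1)$ and $S(\tau_j-1,\tau_j)$, then uses the strong Markov property and time reversal to get the uniform lower bound; your appeal to Borell--TIS and stationarity of increments is the same thing in different words.
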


\begin{proof} 
Let us write $\tau_j=\rho_j+\Delta_{j+1}^1$ and decompose the past process as:

 {
\begin{equation}\label{eq:past_three_terms_first_prop}
P^{(-\infty,\tau_j),\tau_j} = P^{(-\infty,\rho_j),\tau_j} + P^{(\rho_j,\tau_j-1),\tau_j} + P^{(\tau_j-1,\tau_j),\tau_j} .
\end{equation}
}

By Definition \ref{def:admissibility} the first two terms will constitute the remote past and the last one will become the recent one. For the first one, by Lemma \ref{lem:failed_waitings_uniform} below, we have that, { almost surely,}
\begin{equation}\label{eq:cancellation}
\forall u,t >0\;\; u^{-1} (1 \lor t)^{- {(1-\alpha) \lor 0}} \int_{-\infty}^{\rho_j} G(u, \rho_j + \Delta^1_{j+1} + t, r) dB_r \leq K C_W^{-\kappa} \leq c_{A,f}/2,
\end{equation}
where the last inequality follows by our assumption on $C_W$.

We then consider the second and third terms. To this end, we want to show that, with positive probability, there holds:
\begin{equation}\label{eq:recent_past_admissibility_comp}
\begin{aligned}
\sup_{u,t>0}\;  (1+t)^{- {(1-\alpha) \lor 0} } u^{-1} \int_{\rho_j}^{\rho_j+\Delta^1_{j+1}-1} G(u, \rho_j + \Delta^1_{j+1}+t, r) dB_r & \leq c_{A,f}/2 ,\\ \sup_{u,t>0}  u^{-H+\delta} \int_{\rho_j+\Delta^1_{j+1}-1}^{\rho_j+\Delta^1_{j+1}} G(u, \rho_j + \Delta_{j+1}^1, r) dB_r \leq & 1/2.
\end{aligned}
\end{equation}
Note that if the above is satisfied, we sum \eqref{eq:cancellation} and the first term in \eqref{eq:recent_past_admissibility_comp} and obtain that the remote past satisfies Definition \ref{def:admissibility}, the recent past is immediately satisfied by the second inequality in \eqref{eq:recent_past_admissibility_comp}. 

We use Lemma \ref{lem:recent_past} below, to get that both terms in \eqref{eq:recent_past_admissibility_comp} are bounded by multiples of respectively  $L(\rho_j, \tau_j-1)$ and the second one with $S(\tau_j-1,\tau_j)$. Noting that these two quantities are independent conditionally on $\FF_{\rho_j}$ (by the strong Markov property), this yields, for some $c>0$, (which does not depend on $j$),

\begin{align*}
 \inf_j \PP^{\FF_{\rho_j}}\Big( P^{(-\infty,\rho_j+\Delta_{j+1}^1)} \in \mathbf{A}(c_{A,f}, {1-c_{A,f}}) \Big)&\geq \inf_j  \PP^{\FF_{\rho_j}}\Big( L(\rho_j, \tau_j-1) \leq c  \Big)  \PP^{\FF_{\rho_j}}\Big(S(\tau_j-1,\tau_j) \leq c \Big) \\
 & \geq \PP\left( \sup_{s \geq 0} \frac{|B_s|}{(s+1)^{1/2+\delta}}  \leq c\right) \PP\left( \norm{ B }_{\ccc{1/2-\delta};[0,1]} \leq c \right)  \\
 &=:  \lambda_1 > 0,
 \end{align*}
 where we have also used that $\tau_j-\rho_j := \Delta^1_j-1$ is $\FF_{\rho_j}$-measurable, and time reversal of Brownian motion.
\end{proof}

In the rest of this Section, we state and prove various results that were used in the preceding proof.

For any $a,b$ we can define the following quantity:
\[ M(a,b) = \inv{\absv{b-a}^{2\delta}} \sup_{u,v: a<u<v<b} \frac{ \absv{B_v - B_u}}{ \absv{v-u}^{1/2-\delta}}. \]
\begin{proposition}\label{prop:ibp}
Let $H \in (0,1)$. Then for any $s<t$, $u \geq 0$, there hold:

\begin{enumerate}
\item 
\[ \int_s^t (t+u-r)^{H-1/2} dB_r = \left( t + u- s \right)^{H-1/2} \left( B_t - B_s \right) + (H-1/2) \int_s^t (t+u-r)^{H-3/2} (B_r - B_t) dr. \]
\item 
\begin{equation}\label{eq:long_term_bound_wh}
\sup_{u \leq 1} \sup_{v \in [s,t]} \frac{ \absv{ \int_s^v (v+u-r)^{H-1/2} dB_r } }{ (1 + v-s)^{H+\delta}} \ls_H (1+t-s)^{2\delta} M(s,t).
\end{equation}

%
%
%
%
%
%
\item Recalling that $G(u,t,r):=(t+u-r)^{H-1/2} - (t-r)^{H-1/2}$, for all $s<t<\tau$ and $\gamma >\max(0, H-1/2) $ there holds:

\begin{equation}\label{eq:ibp}
\sup_{u >0}\; u^{-\gamma}  \int_s^t G(u,\tau,r) dB_r \lesssim \left( \absv{ \tau - s }^{H-\inv{2}-\gamma} \absv{ B_t - B_s } + \int_s^t \absv{\tau - r}^{H-\frac{3}{2}-\gamma} \absv{ B_r - B_t} dr \right).
\end{equation}
\vspace{0.25cm}
\item For $s = -\infty$ and $t=0$ and any $\sigma > H$:

\begin{equation}\label{eq:ibp_infty}
\sup_{u>0}\; u^{-\sigma}  \int_{-\infty}^0 G( u, \tau, r ) dB_r \lesssim \int_{-\infty}^0 \left( \tau - r \right)^{H-3/2-\sigma}  \absv{ B_r - B_0 } dr .
\end{equation}
\end{enumerate}

\end{proposition}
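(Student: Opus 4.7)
The plan is to base all four parts on integration by parts for a deterministic integrand against Brownian motion, namely
$\int_s^t f(r)\, dB_r = f(t)B_t - f(s)B_s - \int_s^t f'(r) B_r\, dr$,
followed by an algebraic rearrangement (adding and subtracting a common reference value of $B$) that replaces Brownian values by Brownian increments. After this step, the remaining analysis reduces to pathwise Brownian increment estimates combined with uniform deterministic bounds on the kernels $(\tau+u-\cdot)^{H-1/2}$ and $G(u,\tau,\cdot)$.

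For (1), I would apply the integration by parts above to $f(r) = (t+u-r)^{H-1/2}$ on $[s,t]$. The boundary contributes $u^{H-1/2} B_t - (t+u-s)^{H-1/2} B_s$, and after writing $B_r = (B_r - B_t) + B_t$ inside the integral the explicit primitive $(H-1/2)\int_s^t (t+u-r)^{H-3/2}\, dr = (t+u-s)^{H-1/2} - u^{H-1/2}$ telescopes the boundary contributions to produce the stated identity. Part (2) then follows by applying (1) with $t$ replaced by $v$ and bounding each piece through $M(s,t)$: by definition $|B_v - B_s| \leq M(s,t) |t-s|^{2\delta} |v-s|^{1/2-\delta}$, and analogously for $|B_r - B_v|$. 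The deterministic factor $(v+u-s)^{H-1/2}$ is handled by a case split on whether $u \leq v-s$ or $u > v-s$, and the remaining integral $\int_0^{v-s}(w+u)^{H-3/2} w^{1/2-\delta}\, dw$ (after the substitution $w = v-r$) is controlled by splitting at $w = u$. Using $u \leq 1$, all $u$-dependence cancels and the worst contribution becomes $(1+v-s)^{H-\delta} M(s,t) (1+t-s)^{2\delta}$, yielding the claim after dividing by $(1+v-s)^{H+\delta}$.

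Parts (3) and (4) are the technical heart. I would first establish as a pointwise lemma that, for any $\gamma \in [\max(0,H-1/2),1]$, uniformly in $u > 0$ and $r < \tau$,
$|G(u,\tau,r)| \lesssim u^\gamma (\tau-r)^{H-1/2-\gamma}$ and $|\partial_r G(u,\tau,r)| \lesssim u^\gamma (\tau-r)^{H-3/2-\gamma}$.
Writing $G(u,\tau,r) = (H-1/2)\int_{\tau-r}^{\tau+u-r} w^{H-3/2}\, dw$, one splits the two regimes $u \leq \tau-r$ (where $|G|$ is essentially linear in $u$) and $u > \tau-r$ (where $|G|$ is bounded by $(\tau-r)^{H-1/2}$ if $H<1/2$ and grows only like $u^{H-1/2}$ if $H>1/2$), and interpolates via $\min(1,x)\leq x^\gamma$. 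The same argument applied to the analogous representation of $\partial_r G$ with kernel $w^{H-5/2}$ yields the bound on $\partial_r G$. For (3), the rearranged integration by parts applied to $f = G(u,\tau,\cdot)$ on $[s,t]$ gives
$\int_s^t G(u,\tau,r)\, dB_r = G(u,\tau,s)(B_t - B_s) - \int_s^t \partial_r G(u,\tau,r)(B_r - B_t)\, dr$,
and inserting the two pointwise bounds with the chosen $\gamma$ reproduces \eqref{eq:ibp}.

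For (4), I would run the same identity on $[-N,0]$ and pass to $N \to \infty$. The boundary contribution at $-N$ is $G(u,\tau,-N)(B_0 - B_{-N})$; using the pointwise bound with $\gamma = \sigma$ together with the almost-sure growth $|B_{-N}| = O(N^{1/2+\delta})$, one gets a bound of order $u^\sigma N^{H-\sigma+\delta}$, which vanishes as $N\to\infty$ provided $\sigma > H$. The resulting identity $\int_{-\infty}^0 G(u,\tau,r)\, dB_r = -\int_{-\infty}^0 \partial_r G(u,\tau,r)(B_r - B_0)\, dr$, combined with the pointwise bound on $\partial_r G$, delivers \eqref{eq:ibp_infty}; note that $\sigma > H$ is also precisely what makes the right-hand side integrable at $-\infty$. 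The main obstacle throughout is securing the pointwise kernel bound uniformly in $u \in (0,\infty)$: the small-$u$ regime (where $|G| \sim u$) and the large-$u$ regime (where $|G| \sim u^{H-1/2}$ when $H>1/2$) together force $\gamma \in [\max(0, H-1/2), 1]$, which is exactly the hypothesis made on $\gamma$ in the statement.
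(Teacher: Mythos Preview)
Your proposal is correct and follows essentially the same route as the paper: integration by parts to produce a boundary term plus an integral against $B_r - B_t$, then pointwise kernel bounds $|G(u,\tau,r)| \lesssim u^{\gamma}(\tau-r)^{H-1/2-\gamma}$ and $|\partial_r G(u,\tau,r)| \lesssim u^{\gamma}(\tau-r)^{H-3/2-\gamma}$, and finally letting $s\to -\infty$ using $\sigma>H$ to kill the boundary contribution. The only cosmetic difference is that the paper obtains the kernel bounds from the elementary inequality $|x^{\eta}-y^{\eta}| \lesssim |x-y|^{\theta}\,y^{\eta-\theta}$ for $x>y>0$, whereas you derive them via the integral representation $G = (H-1/2)\int_{\tau-r}^{\tau+u-r} w^{H-3/2}\,dw$ together with the interpolation $\min(1,x)\leq x^{\gamma}$; these are equivalent arguments.
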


\begin{proof}
The first identity is just a standard integration by parts. For the second one we use the first one with $u \leq 1$. We bound the first term:

\[ \frac{ (v+u-s)^{H-1/2} \absv{B_v - B_s} }{(1+v-s)^{H+\delta}}  \leq  \frac{ (v+u-s)^{H-\delta}}{ (1+v-s)^{H+\delta}} (t-s)^{2\delta} \underbrace{ (t-s)^{-2\delta} \frac{ B_v - B_s}{ \absv{v-s}^{1/2-\delta}}}_{M(s,t)}. \]

For $u \leq 1$ the factor outside of $M(s,t)$ is then bounded with $\absv{s-t}^{2\delta}$. For the second term we obtain the same expression up to multiplicative constant $\inv{H-\delta}$.

For the third point it follows from the first one that
\begin{equation}\label{eq:bound_general}
\int_s^t G(u, \tau, r) dB_r \lesssim G( u, \tau, s) \absv{ B_t - B_s} + \int_s^t \absv{ \partial_r G(u, \tau, r) } \absv{ B_r - B_t } dr .
\end{equation}

We use the elementary fact that it holds for any $\theta \in (0,1)$, $\eta - \theta< 0$, $x > y>0$ :

\[ \absv{ x^{\eta} - y^{\eta} } \ls_{\eta,\theta} \absv{ x- y}^{\theta} \absv{ y}^{\eta - \theta}. \]
Therefore there holds 

 {
\begin{equation}\label{eq:g_holder}
G(\tau,u,s) \ls_{\gamma, H} u^{\gamma} \absv{ \tau - s}^{H-1/2-\gamma}, \;\;\; \partial_r G(\tau, u, r)  \ls_{\gamma, H} u^{\gamma} \absv{ \tau - r}^{H-3/2-\gamma}.
\end{equation}
}

This is sufficient to establish \eqref{eq:ibp}. 

The final identity follows by taking $\gamma = \sigma$, $t=0$ and letting $s\to -\infty$, noting that, since $\sigma>H$, the first term then converges to $0$ by standard growth properties of Brownian paths.
\end{proof}

We will use the following estimate on the recent past, recalling that $S$ and $L$ are defined in Definition \ref{def:long_short}.
\begin{lemma}\label{lem:recent_past}
For all $M>2$, $\tau, t_1 \geq 0$, it holds that 

\[ \sup_{u >0} u^{-1} P^{(\tau-M-t_1,\tau - 1 - t_1),\tau}_u \lesssim  (1+t_1)^{H-1+\delta} L(\tau-M - t_1,\tau-1-t_1), \]

\[ \sup_{u >0} u^{-H+2\delta} P^{(\tau-1-t_1,\tau-t_1), \tau}_u \lesssim S(\tau-1-t_1,\tau-t_1).  \]
\end{lemma}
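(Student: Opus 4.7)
The plan is to derive both estimates from a single integration-by-parts step, namely \eqref{eq:ibp} of Proposition \ref{prop:ibp}(3). For the first inequality I would apply this with $\gamma = \sigma$ on the interval $(s, t) = (\tau - M - t_1, \tau - 1 - t_1)$; for the second, with $\gamma = H - 2\delta$ on $(s, t) = (\tau - 1 - t_1, \tau - t_1)$. Both choices satisfy the admissibility condition $\gamma > \max(0, H - 1/2)$ of \eqref{eq:ibp}: for the first this is the standing assumption $\sigma > H$, and for the second it holds whenever $\delta < \tfrac{1}{4} \wedge \tfrac{H}{2}$. Each application produces a boundary term $|\tau - s|^{H - 1/2 - \gamma}|B_t - B_s|$ and an integral term $\int_s^t |\tau - r|^{H - 3/2 - \gamma}|B_r - B_t|\,dr$, which I would then control using the definitions of $L$ and $S$.

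For the first bound, I would use the bound $|B_t - B_s| \leq L \cdot M^{1/2 + \delta}$ (from the definition of $L$, since $1 + (t-s) = M$) and then rewrite the boundary contribution as
\[ M^{1/2 + \delta}(M + t_1)^{H - 1/2 - \sigma} = \left(\frac{M}{M + t_1}\right)^{1/2 + \delta}(M + t_1)^{H - \sigma + \delta} \leq (1 + t_1)^{H - \sigma + \delta}, \]
where the final bound uses $M \geq 1$ together with $H - \sigma + \delta < 0$ (the latter being a consequence of $\sigma > H + 2\delta$, implicit in \eqref{eq:fixed_one}). For the integral term, the substitutions $v = \tau - r$ and then $w = v - t_1$ reduce matters to $L \cdot \int_1^M (w + t_1)^{H - 3/2 - \sigma} w^{1/2 + \delta} \, dw$. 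I would handle this by splitting into regimes: when $t_1 \leq 1$, use $(w + t_1)^{H - 3/2 - \sigma} \leq w^{H - 3/2 - \sigma}$ and the convergence of $\int_1^\infty w^{H - 1 - \sigma + \delta}\,dw$; when $t_1 > 1$, split the integration at $w \asymp t_1$, with both halves producing the target scaling $t_1^{H - \sigma + \delta}$.

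For the second bound the interval has length $1$, so the boundary term is bounded by $(1 + t_1)^{-1/2 + 2\delta}|B_t - B_s|$; since the reference point $T = \tau - t_1$ used in the definition of $S$ is exactly the right endpoint $t$, one has $|B_t - B_s| \leq S \cdot 1^{1/2-\delta} = S$, and the factor $(1 + t_1)^{-1/2 + 2\delta} \leq 1$ since its exponent is negative. For the integral I would substitute $v = T - r \in [0, 1]$, which yields $S \int_0^1 (v + t_1)^{-3/2 + 2\delta} v^{1/2 - \delta}\,dv$; using the negativity of the first exponent to bound $(v + t_1)^{-3/2 + 2\delta} \leq v^{-3/2 + 2\delta}$ reduces the problem to the finite integral $\int_0^1 v^{-1 + \delta}\,dv = 1/\delta$, independently of $t_1$.

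The main subtlety I anticipate is securing uniformity in both $M > 2$ and $t_1 \geq 0$ for the first estimate. The algebraic rewriting of the boundary term that factors out $M/(M+t_1)$ is the crucial step that unifies the regimes $M \gg t_1$ and $M \ll t_1$; without it, the case $M$ large and $t_1$ bounded (or vice versa) would require separate arguments. The case split in the integral term reflects the same dichotomy. Once the exponent $H - 2\delta$ in the second bound is aligned with the $1/2 - \delta$ appearing in the definition of $S$, the short-term estimate is essentially routine.
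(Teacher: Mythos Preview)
Your proposal is correct and follows exactly the paper's approach: both apply \eqref{eq:ibp} with $\gamma=\sigma$ on $(\tau-M-t_1,\tau-1-t_1)$ for the first bound and $\gamma=H-2\delta$ on $(\tau-1-t_1,\tau-t_1)$ for the second. The paper's proof simply cites these two applications and leaves the subsequent bookkeeping (boundary and integral terms, uniformity in $M$ and $t_1$) implicit; you have filled in precisely those details, and they are all correct.
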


\begin{proof}
The first inequality follows from \eqref{eq:ibp}, taking $\gamma= 1$, $s=\tau-M-t_1$, $t=\tau-1-t_1$. The second inequality also follows from  \eqref{eq:ibp}, taking $\gamma= H-2 \delta$, $s=\tau-1-t_1$, $t=\tau-t_1$.
\end{proof}

%
%
%
%
%
%
%
%
%
%
%
%

\begin{remark}
Note that the treatment of the most recent part is different from computations performed in \cite{PR20}. The needs are slightly different - in their case they could be satisfied with checking that the recent past is smaller than some positive quantity. In our case we need to take into account that the same fluctuations can be used as a driver for Riemann-Liouville fractional Brownian motion in Lemma \ref{lem:wait_above_before_the_climb} (stage 2) and the recent past in the stage 3 that follows (Lemma \ref{lem:infinite_brownian_holder_norms}), where the same noise is going to be part of $P^{(-T_{k+1}, -T_k)}_{\cdot}$ for all $k \in \NN$. This is why we need to obtain a bound directly in terms of the norm of underlying Brownian norm, which can be easily expressed.
\end{remark}

We use \eqref{eq:ibp} to estimate the rate of decay. 
\begin{lemma}\label{lem:failed_waitings_uniform}
Let $\Delta_j^1$ be as in \eqref{eq:delta1_def}. Then for a $C_W$ fixed in Definition \ref{prop:stage2_constants} there holds  {almost surely}:

\[ \sup_{k \in \NN} \sup_{u > 0} \sup_{t > 0} u^{-1}  \left( 1 + t \right)^{ {(1-\alpha) \lor 0}}  P^{(-\infty,\rho_k),\rho_k+\Delta_k^1+t}_u \leq K_{\text{waiting}}  C_W^{- {\kappa}} .\]

\end{lemma}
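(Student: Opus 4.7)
The plan is to combine the integration-by-parts estimate \eqref{eq:ibp_infty} with a partition of the past along $(\rho_j)$, telescoping Brownian increments through the $L$-norms of Definition~\ref{def:long_short}, and then extracting a negative power of $C_W$ from the pathwise bounds hard-coded in \eqref{eq:delta1_def} and \eqref{eq:delta_before_zero}. Setting $\tau := \rho_k + \Delta_k^1 + t$, a time-translated version of \eqref{eq:ibp_infty} gives
\[
u^{-\sigma} P^{(-\infty,\rho_k),\tau}_u \;\ls\; \int_{-\infty}^{\rho_k} (\tau-r)^{H-3/2-\sigma}\, |B_r - B_{\rho_k}|\, dr.
\]
Writing $L_m := L(\rho_{m-1},\rho_m)$ for $m \geq 1$ and $L_0 := L(-\infty,0)$, the next step is to split the past as $(-\infty,0] \cup \bigcup_{j=1}^k [\rho_{j-1},\rho_j]$ and telescope $|B_r - B_{\rho_k}| \leq \sum_{m=j}^k L_m (1+\Delta_{m-1})^{1/2+\delta}$ on $[\rho_{j-1},\rho_j]$ (with an additional $L_0 (1+|r|)^{1/2+\delta}$ contribution on $(-\infty, 0]$). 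Fubini and integration of the power kernel then give
\[
u^{-\sigma} P^{(-\infty,\rho_k),\tau}_u \;\ls\; L_0\, \tau^{H-\sigma+\delta} + \sum_{m=1}^k L_m (1+\Delta_{m-1})^{1/2+\delta} (T_m+\Delta_k^1+t)^{H-1/2-\sigma},
\]
with $T_m := \rho_k - \rho_m$; the convergence of the remote-past integral at $-\infty$ relies on $\sigma > H + \delta$.

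Next, I would exploit the pathwise facts $L_m \leq (\Delta_m^1)^{1/\mu_g}$ with $\Delta_m^1 \geq C_W m^\theta$ (for $m \geq 1$) and $L_0 \leq \Delta_0^{1/\mu_g}$ with $\Delta_0 \geq C_W$, which combined with $\Delta_i \geq \Delta_i^1$ yield $T_m+\Delta_k^1 \geq \Delta_m^1 \geq C_W m^\theta$ and $\tau \geq \Delta_0 \geq C_W \geq 1$. Setting $\eta := \tfrac12 + \sigma - H$ and $a := (\sigma - H - 2\delta)\xi/\eta$, the condition $\xi<1$ (implied by \eqref{eq:fixed_one}) gives $a \in (0,1)$, and $\Delta_k^1 \geq 1$ forces $T_m+\Delta_k^1+t \geq 1+t$. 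The interpolation
\[
(T_m+\Delta_k^1+t)^{-\eta} \leq (T_m+\Delta_k^1)^{-(1-a)\eta}\,(1+t)^{-a\eta}
\]
extracts precisely the required decay $(1+t)^{-(\sigma-H-2\delta)\xi}$; an analogous splitting of $\tau^{H-\sigma+\delta}$ handles the $L_0$ term.

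What remains is to bound the residual sum $\sum_{m=1}^k L_m (1+\Delta_{m-1})^{1/2+\delta} (T_m+\Delta_k^1)^{-(1-a)\eta}$ (and the analogous remote-past piece) by a constant times $C_W^{-\ell}$, uniformly in $k$. I would substitute $L_m \leq (\Delta_m^1)^{1/\mu_g}$ and $(T_m+\Delta_k^1)^{-(1-a)\eta} \leq (C_W m^\theta)^{-(1-a)\eta}$ (valid because $(1-a)\eta > 1/\mu_g$, which follows from \eqref{eq:fixed_one}), and bound $(1+\Delta_{m-1})^{1/2+\delta}$ by a polynomial in $C_W$ and $m$ via the decomposition $\Delta_{m-1} \leq \Delta_{m-1}^1 + (1+t^*) + \Delta_{m-1}^3$ (the deterministic $1+t^*$ is harmless and $\Delta_{m-1}^3$ can be controlled by the $L$-norm on the previous interval). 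Each summand then takes the form $C_W^{-p}\, m^{-q}$ for explicit $p,q>0$. The hard part will be this final bookkeeping: \eqref{eq:fixed_one} is calibrated precisely so that both $q>1$ (uniform convergence of the series in $k$) and $p>0$ strictly (yielding $C_W^{-\ell}$ with $\ell := \ell(\xi,\mu_g,\sigma)>0$).
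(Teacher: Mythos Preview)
There is a genuine gap in the last step. Your telescoping produces the factor $(1+\Delta_{m-1})^{1/2+\delta}$ paired with $(T_m+\Delta_k^1+t)^{H-1/2-\sigma}=(\tau-\rho_m)^{H-1/2-\sigma}$, and you then claim that $(1+\Delta_{m-1})^{1/2+\delta}$ can be bounded by a polynomial in $C_W$ and $m$ via the decomposition of $\Delta_{m-1}$. This is where the argument breaks: $\Delta_{m-1}^3$ is the (random) Stage~3 duration, of order $q^{k^f_{m-1}}$, and there is no pathwise bound on it in terms of $L$-norms or deterministic quantities --- only tail estimates (Lemma~\ref{lem:climbing_tails}). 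The statement that ``$\Delta_{m-1}^3$ can be controlled by the $L$-norm on the previous interval'' is simply false. Moreover, $\tau-\rho_m$ gives you no leverage on $\Delta_{m-1}=\rho_m-\rho_{m-1}$, since these look at disjoint time intervals.

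The paper avoids this entirely by applying the \emph{local} integration-by-parts estimate \eqref{eq:ibp} on each interval $[\rho_{j-1},\rho_j]$ separately (with endpoint $\rho_j$, not $\rho_k$), rather than the global \eqref{eq:ibp_infty}. This yields directly a bound of the form $L(\rho_{j-1},\rho_j)\,|\tau-\rho_{j-1}|^{H-\sigma+2\delta}$ for the $j$-th piece, with no dangerous $\Delta$ factor in the numerator: the would-be factor $(1+\Delta_{j-1})^{1/2+\delta}$ is absorbed into the kernel because $1+\Delta_{j-1}\le \tau-\rho_{j-1}$. One then interpolates using the three lower bounds $\tau-\rho_{j-1}\ge 1+t$, $\tau-\rho_{j-1}\ge L(\rho_{j-1},\rho_j)^{\mu_g}$, and $\tau-\rho_{j-1}\ge C_W\sum_{i=j}^k i^\theta$, with weights $\xi,\xi_2,\xi_1$ chosen so that $\mu_g\xi_2(\sigma-H-2\delta)=1$; this makes the $L$ factor cancel \emph{exactly}, and the remaining series $\sum_j(k-j)^{-(1+\theta)\xi_1(\sigma-H-2\delta)}$ converges by \eqref{eq:fixed_one}. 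Your approach can be repaired by a finer Fubini: if you keep the sharper bound $|B_r-B_{\rho_j}|\le L_j(1+\rho_j-r)^{1/2+\delta}$ on $[\rho_{j-1},\rho_j]$ (rather than the crude $L_j(1+\Delta_{j-1})^{1/2+\delta}$) and integrate the telescoped constant piece only over $(-\infty,\rho_{m-1}]$, you recover $(\tau-\rho_{m-1})^{H-1/2-\sigma}$, which \emph{does} dominate $(1+\Delta_{m-1})$ --- and then you are essentially back to the paper's computation.
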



\begin{proof}
Let us define 

\begin{equation}\label{eq:theta_mu_params}
\xi_2 =  { \frac{2}{\kappa} } \inv{(1 - H - 2 \delta)}, \quad \xi_1 = 1 - \xi - \xi_2, \quad \xi = \frac{(1-\alpha ) \lor 0}{1-H-2\delta}.
\end{equation}
 {By the assumed condition on $\kappa$, we observe that

\[ \frac{ \left(1 - \alpha \right) \lor 0}{1-H-2\delta} + \frac{ 3\kappa}{2(1-H-2\delta)} < 1 .\]
}
Therefore holds that $\xi_1, \xi_2 > 0$ and  $\xi_1 > \frac{\kappa-\delta}{(1-H - 2\delta)}$.


We decompose as follows:

\[ \int_{-\infty}^{\rho_{k-1}} G(u,\rho_k + t, r) dB_r = \sum_{j=-1}^{k-2} \int_{\rho_j}^{\rho_{j+1}} G(u,\rho_k + t, r) dB_r  \] 

We first treat the term corresponding to $j=-1$, and using \eqref{eq:ibp_infty} we obtain
 {
\begin{align*}
\int_{-\infty}^0 G(u,\rho_k + t, r) dB_r \lesssim &  \;L(-\infty,0) \int_{-\infty}^0 \absv{\rho_k + t - r}^{H-3/2-1} (1-r)^{1/2+\delta} dr \\ \leq & \; L(-\infty,0) \absv{\rho_k+t-1}^{H-1+2\delta}  \int_{0}^{\infty} \absv{1+r}^{-3/2-2\delta} \absv{1+r}^{1/2+\delta} dr \\ \ls & \;L(-\infty,0) \absv{\rho_k+t-1}^{H-1+2\delta} \\
 \ls &\; L(-\infty,0)^{1+\frac{2}{\kappa}(H-1+2\delta)\xi_2} (1+t)^{(H-1)\xi} C_W^{(H-1+2\delta)\xi_1} \\
\leq &\; C_W^{(H-1+2\delta)\xi_1} (1+t)^{(H-1)\xi} ,
\end{align*}
}

where we used $\absv{\rho_k+t-r} > \absv{\rho_k+t-1} \lor \absv{1-r}$ in the second line, and in the fourth we used interpolation and the fact that
\[\rho_k + t - 1 \geq \Delta_{-1} + t - 1 \geq \max \left( L(-\infty,0)^{\frac{2}{\kappa}},   C_W, 1+t\right). \]

  To treat the summands corresponding to $j \geq 0$, we set $I_j := \int_{\rho_j}^{\rho_{j+1}} G(u,\rho_k + t, r) dB_r $ and we use \eqref{eq:ibp} to get that $I_j$ is bounded by :

\begin{equation}\label{eq:tau_to_rho}
\absv{ I_j } \ls_{H,1} \absv{ \rho_k + t- \rho_{j-1} }^{H-\inv{2}-1} \absv{ B_{\rho_j} - B_{\rho_{j-1}} } +  \int_{\rho_{j-1}}^{\rho_j} \absv{ \rho_k + t - r }^{H-\frac{3}{2}-1} \absv{ B_r - B_{\tau_{j}} }  dr .
\end{equation}

We will bound the two terms using the same estimate:

\begin{equation}\label{eq:lower_bd_tauk}
\forall r \in [\rho_{j-1},\rho_j],\;\;\rho_k + t - r \geq \left( \rho_k + t - \rho_j - 1 \right) + \left( 1 + \rho_j - r \right),
\end{equation}

which leads to:

\begin{equation}\label{eq:remote_past_bound}
 { \absv{I_j} } \ls_{H,1} L(\rho_{j-1},\rho_j) \absv{ \rho_k + t - \rho_{j-1} }^{H-1+2\delta} \left( 1 + \int_{\rho_{j-1}}^{\rho_j} \absv{\rho_k + t - r }^{-1-\delta} dr \right) .
\end{equation}

First we treat the bracket with integral inside - we have $\rho_k - \rho_j > 1$, which means that the integral is always bounded by some constant depending on $\delta$. To bound the term outside the bracket we then use \[ \rho_k - \rho_{j-1} + t \geq  t \lor L(\rho_{j-1}, \rho_j)^{ { \frac{2}{\kappa}} } \lor  C_W \sum_{m=j}^k m^{ { \inv{\kappa}-1}} .\] Here we ignore $t$ if $t < 1$. By interpolation, this leads to :

\[ \absv{ \rho_k - \rho_{j-1} }^{H-1+\delta} < (1 \lor t)^{(H-1+2\delta)\xi} C_W^{\xi_1(H-1+2\delta)} \left( \sum_{m=j}^{k-1} m^{\theta} \right)^{\xi_1(H-1+2\delta)}  L(\rho_{j-1}, \rho_j)^{ { \frac{2}{\kappa} } \xi_2(H-1+2\delta)}. \]

Putting it back into \eqref{eq:remote_past_bound} one observes again that the term in $L$ disappears, and to get the estimate in terms of $k$ we then compute

\[ \sum_{m=j}^{k-1} m^{ {\inv{\kappa}-1}} \gtrsim (k-1)^{ {\inv{\kappa}}} - (j-1)^{\inv{\kappa}} \gtrsim (k - j )^{ {\inv{\kappa}}}, \]

so that
 {
\[ \sum_{j=0}^k \left( \sum_{m=j}^k m^{ { \inv{\kappa}-1}} \right)^{\xi_1(H-1+2\delta)} < \sum_{j=0}^k (k-j)^{-( { \inv{\kappa}})(1-H-2\delta)\xi_1} \ls_{\theta,1} 1 .\]
}
 
where the uniform bound in the end follows by definition of $\xi_1$ and the assumption .

Summing over $k$, we obtain that

\[ \sum_{j=-1}^{k-2} \int_{\rho_j}^{\rho_{j+1}} G(u,\rho_k + t, r) dB_r \lesssim (1 + t)^{(H-1+2\delta)\xi} C_W^{\xi_1(H-1+2\delta)},  \]

which is the conclusion of the Lemma and by definition of $\xi_1$ we observe that $\xi_1(1-H-2\delta) < - \kappa$.
\end{proof}

\subsection{Stage 2}\label{subsec:stage_two}

This Section is devoted to the proof of the following Proposition.

\begin{proposition}\label{prop:stage2_prop}
Let $c_{A,f}, t^*, K_A$ be parameters defined in Definition \ref{def:stage2_constants}. Let $\tau$ be any stopping time such that the following holds a.s.\[ P^{(-\infty,\tau)} \in \mathbf{A}(c_{A,f},  {1-c_{A,f}}).\]

Then for the event $A_2$ defined by: 

\[  A_2= \left\{ X_{\tau+1+t^*}  > 1, \quad P^{(-\infty, {\tau+}1+t^*)} \in \mathbf{A}(K_A, K_A)\right\}, \]
 it holds that for some $\lambda_2 > 0$ \[ \PP^{\FF_{\tau}}\left( A_2 \right) \geq \lambda_2. \]

\end{proposition}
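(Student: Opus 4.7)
The strategy is to build an $\FF_\tau$-conditional event of positive probability on which both clauses defining $A_2$ hold. Following \eqref{eq:new_past}, decompose the driver on $[\tau,\tau+1+t^*]$ as
\[
W_{\tau+t} - W_\tau \;=\; N^\tau_t + P^{(-\infty,\tau),\tau+t}_t,
\]
where $N^\tau_t := \int_\tau^{\tau+t}(\tau+t-r)^{H-1/2}\,dB_r$ is a Riemann--Liouville fBm independent of $\FF_\tau$ and the past term is $\FF_\tau$-measurable. On this interval $X$ solves the SDE driven by $N^\tau$ with $\FF_\tau$-measurable additive perturbation $X_\tau + P^{(-\infty,\tau),\tau+\cdot}$, so Lemma \ref{thm:girsanov} yields a uniform $\FF_\tau$-conditional Gaussian tail for $\|F\|_{\ccc{1+H\gamma}}$ where $F(t) := \int_\tau^{\tau+t}b(X_r)\,dr$.

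I then introduce the favourable event $G = G_{\mathrm{big}} \cap G_{\mathrm{hol}}$, with $G_{\mathrm{big}} := \{N^\tau_{1+t^*} \geq M^*(t_e^{-1},U)\}$ driving the system upward, and $G_{\mathrm{hol}}$ controlling the long- and short-term seminorms $L(\tau,\tau+t^*)$, $S(\tau+t^*,\tau+1+t^*)$ together with $\|B\|_{\ccc{1/2-\delta};[\tau,\tau+1+t^*]}$ at the thresholds of Definition \ref{def:stage2_constants}. Non-determinism of fBm (Gaussian anti-concentration at scale $(1+t^*)^H$) combined with independence of $N^\tau$ from $\FF_\tau$ gives $\PP^{\FF_\tau}(G_{\mathrm{big}}) \gtrsim C_B(U-1)\,t_e^{-1/2}\exp(-t_e^{-1-2\delta})$, while Gaussian tail bounds yield $\PP^{\FF_\tau}(G_{\mathrm{hol}}) \geq \lambda'>0$ uniformly. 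Condition \eqref{eq:girsanov_proba} then certifies that after excising the Girsanov exceptional set $\{\|F\|_{\ccc{1+H\gamma}} > \eta\}$ for suitable $\eta$, a positive conditional mass remains, i.e.\ $\PP^{\FF_\tau}(A_2) \geq \lambda_2 > 0$.

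On $G \cap \{\|F\|_{\ccc{1+H\gamma}} \leq \eta\}$, I verify the two parts of $A_2$ additively. For $X_{\tau+1+t^*}>1$: the past contribution $X_\tau + P^{(-\infty,\tau),\tau+1+t^*}_{1+t^*}$ is bounded by $c_{A,f}(1+t^*)^\sigma \leq (1+t^*)^{H+\delta}$ using \eqref{eq:ca_tstar} and the Stage-1 admissibility; the drift part $F(1+t^*)$ is of order $\eta(1+t^*)^{1+H\gamma}$, hence $\leq t_e^H$; and $N^\tau_{1+t^*} \geq M^*(t_e^{-1},U)$. Summing and invoking \eqref{eq:vartheta_lower} gives $X_{\tau+1+t^*}>1$. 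For $P^{(-\infty,\tau+1+t^*)} \in \mathbf{A}(K_A)$, split the new past into three blocks: (a) the old past $(-\infty,\tau]$ viewed from $\tau+1+t^*$ decays by the factor $(1+t^*)^{(H-\sigma)\xi}$ through a Lemma \ref{lem:failed_waitings_uniform}-type estimate and is $\leq K_A/3$ by \eqref{eq:teu}; (b) the contribution of $[\tau,\tau+t^*]$ is $\leq U^{-\vartheta(1/2-3\delta)}(t^*)^{-3\delta(1/2+\delta)} \leq K_A/3$ on $G_{\mathrm{hol}}$ via Lemma \ref{lem:recent_past}; (c) the last unit $[\tau+t^*,\tau+1+t^*]$ plays the role of the recent past and is $\leq c_{A,f} \leq K_A/3$.

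The main technical obstacle will be ensuring that the anti-concentration lower bound for $\PP^{\FF_\tau}(G_{\mathrm{big}})$ dominates the Girsanov exceptional probability, which is exactly the content of \eqref{eq:girsanov_proba}; this forces the joint tuning of $U$, $t_e$, $C_B$ and $\lambda$ carried out in Definition \ref{def:stage2_constants}. The hypothesis $\gamma > 1 - 1/(2H)$ enters here in an essential way, as it guarantees that the exponent $-1-2(1/2+H(\gamma-1))$ appearing on the Girsanov side is dominated by $-1-2\delta$ for small enough $\delta$, allowing the balance in \eqref{eq:girsanov_proba} to close.
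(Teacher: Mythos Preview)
Your plan collapses the paper's two-step argument into a single step on $[\tau,\tau+1+t^\ast]$, and this creates a genuine gap in the treatment of the drift. You propose to control $F(1+t^\ast)=\int_\tau^{\tau+1+t^\ast}b(X_r)\,dr$ via Lemma~\ref{thm:girsanov}, obtaining $|F(1+t^\ast)|\le \eta(1+t^\ast)^{1+H\gamma}$, and then assert this is $\le t_e^H$. But $t^\ast=(Ut_e^{-1/2-\delta})^\vartheta$ is large and $1+H\gamma>1/2+H>0$ under the standing assumption $\gamma>1-\tfrac{1}{2H}$, so $(1+t^\ast)^{1+H\gamma}$ is large; forcing $\eta(1+t^\ast)^{1+H\gamma}\le t_e^H$ would require $\eta$ so small that the excised set $\{\|F\|_{\ccc{1+H\gamma}}>\eta\}$ has probability close to $1$. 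The balance \eqref{eq:girsanov_proba} is calibrated to $\eta$ of order $t_e^{-(1+H(\gamma-1))}$ on an interval of length $t_e$, not to a small $\eta$ on an interval of length $1+t^\ast$, so it cannot close your version of the inequality.

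The paper's proof avoids this by splitting the interval. On $[\tau,\tau+1]$ it uses the bridge decomposition $B_t=tZ+b_t$ and the event $A_g$ of Definition~\ref{def:getting_out_event}: $Z\in(12t_e^{-1/2-\delta},12Ut_e^{-1/2-\delta})$ and $\|b\|$ small. The Girsanov bound is invoked \emph{only} on the short subinterval $[\tau,\tau+t_e]$ (this is precisely what \eqref{eq:girsanov_proba} is tuned for); Lemma~\ref{lem:getting_out} then shows deterministically that $X>0$ on $[\tau+t_e,\tau+1]$, so the drift is nonnegative and can be dropped from the lower bound, yielding $X_{\tau+1}>t_e^{-1/2-\delta}$. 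On $[\tau+1,\tau+1+t^\ast]$ (Lemma~\ref{lem:wait_above_before_the_climb}, event $A_w$) one again keeps $X>0$ by pathwise estimates on a \emph{fresh} small-noise event, so the drift helps and no Girsanov input is needed there. The separate waiting step is also essential for admissibility: as noted in Remark~\ref{rem:negative_correlations}, for $H<1/2$ the large $Z$ used to escape becomes a negative past contribution right after time $\tau+1$, so admissibility fails there and must be recovered by letting that contribution decay over $[\tau+1,\tau+1+t^\ast]$. Your single event $G$ asks $N^\tau_{1+t^\ast}$ to be large while simultaneously keeping $L$, $S$ and $\|B\|_{\ccc{1/2-\delta}}$ small on the whole interval; these constraints involve the same Brownian path and you give no argument that their intersection has positive probability.
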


For brevity of notation we now take $\tau=0$ in the rest of this section. Since the proof's only probabilistic requirement is that $(B_t)_{t\geq 0}$ be a Brownian motion independent from $\FF_0$, the general case follows from the strong Markov property.

\begin{proof}
The dynamics can be decomposed in two steps. First in Lemmas \ref{lem:getting_out} and \ref{lem:getting_out_proba} we show that with positive probability if $X_0 \geq 0$, then $X_1 > 1$ (and $X_1 \leq -1 $ if $X_0 \leq 0$). Note that at this stage the admissibility is not necessarily satisfied, especially for $H<1/2$, see Remark \ref{rem:negative_correlations}. To fix this, we apply Lemma \ref{lem:wait_above_before_the_climb} - we wait an addional time $t^*$, after which both claims are satisfied with positive probability. The event $A_2 = \{ A_g, A_w \}$ with $A_g$ as defined in Definition \ref{def:getting_out_event} and $A_w$ as in Lemma \ref{lem:wait_above_before_the_climb}. Then:

\[ \PP^{\FF_{0}}\left( A_g, A_w \right) \geq \PP^{\FF_{0}} \PP^{\FF_{1}} \left( A_g, A_w \right) \geq \lambda_w \PP^{\FF_{0}}\left( A_g \right) \geq  \lambda_w \cdot \lambda_g =: \lambda_2. \] 

\end{proof}

We recall the following decomposition of Brownian motion $B$ on $[0,1]$ :

\begin{equation}\label{eq:bm_bridge}
\forall t \in [0,1]\;\; B_t = t Z + b_{t},
\end{equation}

where $Z \sim \mathcal{N}(0,1)$ and $b$ is a Brownian bridge, and $b$, $Z$ are independent. To construct a suitable path $w_t$ we will show that with positive probability two events take place - the first one, $A_g$, described in Lemma \ref{lem:getting_out_proba} ($A_g$ coming from \textsl{getting out}) is an event such that  {the new noise allows to apply Lemma \ref{lem:getting_out},} under which $|X_1|$ is sufficiently far from $0$. The second one, $A_w$ (coming from \textsl{waiting}) is a technical step, needed to cool down the negative influence of this noise once it becomes the part of the past, see estimates in Lemma \ref{lem:bridge_in_the_past}.

We will start from the preparatory lemma about the lower bound on $\wh$, which is going to allow us to compute the probability of satisfying deterministic estimates.

\begin{lemma}\label{lem:new_noise_bridge_decomp}
Let $Z, b$ be as in \eqref{eq:bm_bridge}:

\[ \forall s \in [0,1]\;\;\int_0^s (s-r)^{H-1/2} dB_r = Z {s}^{H+1/2} C_{H} + b^H_s ,\]

where $\norm{ b^H }_{\ccc{H-\delta} {;[0,1]}} \leq c_{H,b} \norm{ b }_{\ccc{1/2-\delta} {;[0,1]}}$. Moreover, for the same $(Z,b)$ there holds

\[ \int_0^1 (1+z-r)^{H-1/2} dB_r =  c_H h(z) Z + b^{H,z}_{1,0} ,\quad h(z) > \begin{cases} (1+z)^{H-1/2} & H < 1/2 \\ z^{H-1/2} \lor 1 & H > 1/2 \end{cases}  , \]

with $\sup_{z > 0} (1+z)^{3/2-H} \absv{ b^{H,z}_{1,0} } \leq C_H \norm{ b }_{\ccc{1/2-\delta};[0,1]}$ and $C_H = \frac{ \absv{H-1/2}}{H-\delta}$.
\end{lemma}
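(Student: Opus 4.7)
The plan is to decompose both integrals via \eqref{eq:bm_bridge}, writing $B_r = rZ + b_r$ so that $dB_r = Z\, dr + db_r$. For the first integral,
\[ \int_0^s (s-r)^{H-1/2} dB_r = Z \int_0^s (s-r)^{H-1/2} dr + \int_0^s (s-r)^{H-1/2} db_r, \]
the deterministic piece evaluates explicitly to $\frac{Z s^{H+1/2}}{H+1/2}$, giving $C_H = \inv{H+1/2}$; I would then \emph{define} $b^H_s$ to be the remaining stochastic integral against the bridge. Exactly the same splitting applied to $\int_0^1 (1+z-r)^{H-1/2}dB_r$ produces the coefficient of $Z$ equal to $\frac{(1+z)^{H+1/2}-z^{H+1/2}}{H+1/2}$ and the residual $b^{H,z}_{1,0}:=\int_0^1 (1+z-r)^{H-1/2}db_r$.

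For the lower bound on $h(z)$, apply the mean value theorem to $x\mapsto x^{H+1/2}$ on $[z,1+z]$: the quantity above equals $\xi^{H-1/2}$ for some $\xi\in(z,1+z)$. When $H<1/2$ the function $x\mapsto x^{H-1/2}$ is decreasing, so $\xi^{H-1/2}\geq (1+z)^{H-1/2}$; when $H>1/2$ it is increasing and bounded below by $z^{H-1/2}$, while the direct estimate $\int_0^1(1+z-r)^{H-1/2}dr\geq \int_0^1(1-r)^{H-1/2}dr=\inv{H+1/2}$ provides the ``$\lor 1$'' term.

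For the pointwise bound on $b^{H,z}_{1,0}$, I would exploit $b_0=b_1=0$: integration by parts (the kernel is smooth since $z>0$) kills both boundary contributions and yields
\[ b^{H,z}_{1,0} = (H-\tfrac12)\int_0^1 (1+z-r)^{H-3/2}\, b_r\, dr. \]
Using $\absv{b_r}\leq \norm{b}_{\ccc{1/2-\delta}}(r\land(1-r))^{1/2-\delta}$, I would split at $r=1/2$ and treat the two cases $z\geq 1$ (where $(1+z-r)^{H-3/2}\lesssim (1+z)^{H-3/2}$ uniformly, leaving only an integrable residual) and $z\leq 1$ (where $(1+z)^{3/2-H}$ is itself bounded and $(1+z-r)^{H-3/2}\leq(1-r)^{H-3/2}$ gives an integral convergent under $H>\delta$). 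The prefactor $|H-1/2|$ together with $\int(1-r)^{H-1-\delta}dr = \inv{H-\delta}$ yields the stated constant $C_H = \absv{H-1/2}/(H-\delta)$.

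The main obstacle, handled last, is the H\"older bound on $b^H$. For $u<s$ I would write
\[ b^H_s - b^H_u = \int_u^s (s-r)^{H-1/2} db_r + \int_0^u \left[(s-r)^{H-1/2} - (u-r)^{H-1/2}\right] db_r, \]
and on each piece perform integration by parts in the style of Proposition \ref{prop:ibp}, but with the test path $r\mapsto b_r-b_s$ (respectively $b_r-b_u$). This is the key trick: the vanishing $\absv{b_r-b_s}\leq \norm{b}_{\ccc{1/2-\delta}}(s-r)^{1/2-\delta}$ exactly compensates the singularity of $(s-r)^{H-1/2}$ at $r=s$ whenever $H>\delta$, so the boundary terms at $r=s$ drop out and the derivative integral $(H-1/2)\int_u^s(s-r)^{H-3/2}(b_s-b_r)dr$ is absolutely convergent, bounded by a multiple of $\norm{b}_{\ccc{1/2-\delta}}(s-u)^{H-\delta}$. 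Summing these with the analogous estimates on $[0,u]$ delivers $\norm{b^H}_{\ccc{H-\delta}}\ls_H \norm{b}_{\ccc{1/2-\delta}}$. Carrying out this IBP cleanly for $H<1/2$, where the kernel is genuinely singular, is the only delicate step in the argument.
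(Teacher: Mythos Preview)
Your proof is correct and follows essentially the same route as the paper's. The only substantive difference is in how you obtain the H\"older bound $\norm{b^H}_{\ccc{H-\delta}} \lesssim \norm{b}_{\ccc{1/2-\delta}}$: the paper observes that $b^H = I^{H-1/2} b$ is a fractional integral and invokes Theorem~2.8 of \cite{Pic11} directly, whereas you reprove this mapping property by hand via the increment decomposition and integration-by-parts argument you outline. Your approach is more self-contained; the paper's is shorter by citation. (For the other claims --- the value of $C_H$, the lower bounds on $h(z)$, and the pointwise bound on $b^{H,z}_{1,0}$ --- the two arguments are effectively identical, differing only in whether one splits $dB = Z\,dr + db$ before or after the initial integration by parts.)
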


\begin{proof}
See Section \ref{sec:technical}.
\end{proof}

For the same $(Z,b)$ as in Lemma \ref{lem:new_noise_bridge_decomp} we have the following:

\begin{lemma}\label{lem:bridge_in_the_past}
For $w > 1, H \in (0,1), \beta \in (H,1]$:

\begin{equation}\label{eq:bridge_past_lower_bd}
\sup_{u > 0} u^{-\beta} \absv{ \int_0^1 (1+w+u-r)^{H-1/2} - (1+w-r)^{H-1/2} dB_r }  <  w^{H-1/2-\beta} \left( 2Z + \norm{ b }_{\ccc{1/2-\delta};[0,1]} \right).
\end{equation}

\end{lemma}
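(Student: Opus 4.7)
The plan is to exploit the Brownian-bridge decomposition $B_r = rZ + b_r$ from \eqref{eq:bm_bridge} on $[0,1]$ in order to split the stochastic integral into (i) a deterministic Lebesgue integral of $G$ scaled by the scalar $Z$ and (ii) an integral against the bridge $b$ which, since $b_0 = b_1 = 0$, can be turned into a pathwise Lebesgue integral via integration by parts. Both pieces are then handled with the pointwise estimates \eqref{eq:g_holder} that were established in Proposition \ref{prop:ibp}.

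Concretely, I would start by writing
\[ \int_0^1 G(u,1+w,r)\,dB_r \;=\; Z\int_0^1 G(u,1+w,r)\,dr \;+\; \int_0^1 G(u,1+w,r)\,db_r. \]
For the first piece, \eqref{eq:g_holder} with $\gamma=\beta$ gives $|G(u,1+w,r)| \lesssim u^{\beta} |1+w-r|^{H-1/2-\beta}$, and since $r\in[0,1]$ and $w>1$ one has $1+w-r \geq w$, so this contribution is bounded by $|Z|\,u^{\beta} w^{H-1/2-\beta}$. For the second piece, note that $r\mapsto G(u,1+w,r)$ is smooth on $[0,1]$ (since $1+w-r\geq w>0$ there), so standard integration by parts applies to the continuous semimartingale $b$, and the vanishing of $b$ at $0$ and $1$ yields
\[ \int_0^1 G(u,1+w,r)\,db_r \;=\; -\int_0^1 \partial_r G(u,1+w,r)\,b_r\,dr. \]
Then \eqref{eq:g_holder} gives $|\partial_r G(u,1+w,r)| \lesssim u^{\beta} |1+w-r|^{H-3/2-\beta} \leq u^{\beta} w^{H-3/2-\beta}$, while $|b_r| \leq \norm{b}_{\ccc{1/2-\delta};[0,1]}$ (using $b_0=0$ and $r\leq 1$). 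Combining these, dividing by $u^{\beta}$, and absorbing $w^{H-3/2-\beta}\leq w^{H-1/2-\beta}$ (valid since $w>1$ and $H-3/2-\beta < H-1/2-\beta$), produces the asserted bound.

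The estimate is essentially routine once the bridge decomposition is in place: the key structural observation is that the Dirichlet boundary conditions on $b$ let us trade the stochastic integral for a pathwise one with an integrable singularity in $|1+w-r|$, and the fact that the support of integration lies a distance at least $w$ away from the singular point $\tau=1+w$ turns every prefactor into a power of $w$. The only mildly delicate point is keeping track of the numerical constants so as to obtain the explicit factor $2$ in front of $|Z|$, which can be done by being careful in the elementary inequality $|x^{\eta}-y^{\eta}| \leq |\eta|\,|x-y|\,y^{\eta-1}$ underlying \eqref{eq:g_holder}; but no new idea is required beyond that bookkeeping.
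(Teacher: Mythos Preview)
Your argument is correct and follows essentially the same route as the paper. The only cosmetic difference is the order of operations: the paper first integrates by parts against $dB_r$ (producing a boundary term $(B_1-B_0)G(u,1+w,0)=Z\,G(u,1+w,0)$ and an integral of $\partial_r G$ against $B_r-B_1=(r-1)Z+b_r$), and then separates the $Z$ and $b$ contributions, whereas you first split $B_r=rZ+b_r$ and then integrate by parts only the bridge piece, using $b_0=b_1=0$ to kill the boundary terms. Both lead to the same three contributions $Z\,u^{\beta}w^{H-1/2-\beta}$, $Z\,u^{\beta}w^{H-3/2-\beta}$, and $\norm{b}_{\ccc{1/2-\delta}}\,u^{\beta}w^{H-3/2-\beta}$, and the same final bound after absorbing $w^{H-3/2-\beta}\leq w^{H-1/2-\beta}$.
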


\begin{proof}
See Section \ref{sec:technical}.
\end{proof}

Then we can define the desired event.

\begin{definition}\label{def:getting_out_event}
Let $ (Z_g, b_g)$ be the decomposition of $(B_t)_{t \in [0,1]}$ as in \eqref{eq:bm_bridge}. Let $A_g$ be defined as the set on which it holds that
\begin{enumerate}
\item  $ c_{H,b} \norm{ b_g }_{\ccc{1/2-\delta}} \leq 1$, where $c_{H,b}$ is the constant from Lemma \ref{lem:new_noise_bridge_decomp}.

\item$Z_g \in (12 t_e^{-1/2-\delta}, 12 U t_e^{-1/2-\delta})$, where $t_e,U$ are fixed in Definition \ref{def:stage2_constants}.

\item If $\gamma < 0$, the  {random variable $V$} $= \sup_{0 \leq u \leq 1} u^{-1-H\gamma+\delta} \absv{ \int_0^u  { f(X_r) } dr }$ satisfies \eqref{eq:drift_constant_bound} { below}.
\end{enumerate}
\end{definition}

\begin{remark}
Let us clarify the definition of the integral in the third point in Definition \ref{def:getting_out_event}. Initially, when $X$ cannot be bounded away from zero, one can understand this integral in the same sense as \cite{CG16}, namely as a nonlinear Young integral of the averaged field :

\[ \int_0^t  { f(X_r) } dr :=   \int_0^t (T^{W} f)(dr, X_r- W_r) . \]

 We bound this quantity with probabilistic arguments using Theorem \ref{thm:girsanov} for all $t < t_e$, where $t_e$ is as fixed in Definition \ref{def:stage2_constants}. For $t > t_e$ we show that $X$ is bounded away from zero, which implies that this integral can be understood classically because $ { f(x) }$ is smooth away from $0$.
\end{remark}

First we will show that under a deterministic conditions on the path $w$ our trajectory $X$ leaves the critical strip. The constant $t_e$ is a small deterministic time, fixed in Definition \ref{def:stage2_constants}. 

\begin{lemma}\label{lem:getting_out}
Let $X_0 \in (-1,1)$. Let $(w_t)_{t \in [0,1]}$ satisfy:

\begin{equation}\label{eq:noise_decomposition_deterministic}
w_t = Z t^{H+1/2} + u_t  \quad \norm{ u }_{\ccc{H-\delta}} < 2
\end{equation}

If $X_0 \geq 0$ (respectively $< 0$) then assume that, for $t_e$ as defined in Definition \ref{def:stage2_constants}

\begin{equation}\label{eq:normal_and_bridge_assumption}
Z \geq 12 t_e^{-1/2-\delta}
\end{equation}

(respectively $Z \leq - 12 t_e^{-1/2-\delta}$). 

For $\gamma < 0$ we additionally assume that:

\begin{equation}\label{eq:drift_constant_bound}
V :=   \sup_{0 \leq u \leq 1} u^{-1-H\gamma+\delta} \absv{ \int_0^u  { f(X_r) } dr }\leq 9 t_e^{-(1+H(\gamma-1)-\delta)}
\end{equation}

Then there holds:

\[  X_1 \geq t_e^{-1/2-\delta} \quad \text{(respectively $X_1 \leq -t_e^{-1/2-\delta}$)} \]

\end{lemma}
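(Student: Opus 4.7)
The approach is to exploit the identity $X_1 = X_0 + D_1 + w_1$, where $D_1 := \int_0^1 b(X_r)\,dr$, and show that the noise term $w_1$ is so large (of order $t_e^{-1/2-\delta}$) that, even after subtracting any possible negative contribution from the drift, $X_1$ remains above $t_e^{-1/2-\delta}$. Since $X_0 \geq 0$, it suffices to prove $w_1 - |D_1| \geq t_e^{-1/2-\delta}$. Controlling $w_1$ is elementary: using $w_0 = 0$, hence $u_0 = 0$, together with $\|u\|_{\ccc{H-\delta}} < 2$, one has $|u_1| \leq 2$, so $w_1 = Z + u_1 \geq 12 t_e^{-1/2-\delta} - 2$, which is at least $11 t_e^{-1/2-\delta}$ for $t_e$ small (ensured by \eqref{eq:escape_te}). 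The real task is then to bound $|D_1|$ by a quantity much smaller than $t_e^{-1/2-\delta}$.

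For $\gamma < 0$, the hypothesis \eqref{eq:drift_constant_bound} gives this directly by taking $u=1$, yielding $|D_1| \leq 9\, t_e^{-1+H(1-\gamma)+\delta}$. The subcritical regularity condition $\gamma > 1 - \frac{1}{2H}$ forces $H(1-\gamma) < \frac{1}{2}$, so this exponent is strictly greater than $-\frac{1}{2}+\delta$, which is itself strictly greater than $-\frac{1}{2}-\delta$. For $t_e$ sufficiently small, $|D_1|$ is then negligible compared to $w_1$, and combining with the bound on $w_1$ concludes the argument. This case is essentially a matter of exponent bookkeeping using \eqref{eq:escape_te}.

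The case $\gamma \geq 0$ does not rely on \eqref{eq:drift_constant_bound}, since $b$ is now continuous and the drift integral makes classical sense. Here one needs an a priori estimate on $\sup_{s \in [0,1]} |X_s|$, obtained by a sublinear Gronwall argument from $|X_t| \leq |X_0| + |w_t| + A\int_0^t |X_s|^\gamma\, ds$ and $\gamma < 1$: one gets $\sup_s |X_s| \leq C_\gamma(1 + A^{1/(1-\gamma)} + \sup_s |w_s|) \ls_\gamma Z$, since $\sup_s|w_s| \leq |Z|+2$. Therefore $|D_1| \leq A\sup_s |X_s|^\gamma \ls_\gamma Z^\gamma$, which is much smaller than $Z \simeq w_1$ because $\gamma < 1$; the factor $8^{-\gamma}$ in \eqref{eq:escape_te} is calibrated precisely so that this comparison is effective with the explicit constants that appear. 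One then combines as in the previous case.

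The only real obstacle is the bookkeeping: one must verify that the numerical constants (the $12$ in the hypothesis on $Z$, the $9$ in \eqref{eq:drift_constant_bound}, and the $1/4 \wedge 8^{-\gamma}$ in \eqref{eq:escape_te}) combine to give a clean bound of $t_e^{-1/2-\delta}$. There is no deeper difficulty; the entire proof reduces to the observation that, under $\gamma > 1 - \frac{1}{2H}$, a noise term of magnitude $Z \gtrsim t_e^{-1/2-\delta}$ strictly dominates the accumulated drift on the unit interval.
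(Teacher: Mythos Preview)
Your argument for $\gamma<0$ contains a genuine gap. Plugging $u=1$ into \eqref{eq:drift_constant_bound} gives
\[
|D_1|\le K \le 9\,t_e^{-1+H(1-\gamma)+\delta}.
\]
You then claim that the exponent $-1+H(1-\gamma)+\delta$ is strictly greater than $-\tfrac12+\delta$; but the subcriticality condition gives $H(1-\gamma)<\tfrac12$, so in fact $-1+H(1-\gamma)+\delta<-\tfrac12+\delta$. Worse, the constraint on $\delta$ in the paper is $\delta<\tfrac12-H(1-\gamma)$, which does \emph{not} force $H(1-\gamma)>\tfrac12-2\delta$. For instance with $H=0.4$, $\gamma=-0.1$, $\delta=0.01$ one gets $|D_1|\le 9\,t_e^{-0.55}$ while $w_1\approx 12\,t_e^{-0.51}$: the drift bound blows up \emph{faster} than the noise, and the inequality $w_1-|D_1|\ge t_e^{-1/2-\delta}$ fails for small $t_e$. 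So the simple identity $X_1=X_0+D_1+w_1$ is not enough here.

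What you are missing is the dynamical structure of the paper's proof. The bound \eqref{eq:drift_constant_bound} is calibrated for use at the \emph{short} time $t_e$, where $K\,t_e^{1+H\gamma-\delta}\le 9\,t_e^{H}$ is genuinely small compared to $Z\,t_e^{H+1/2}\approx 12\,t_e^{H-\delta}$. The paper first shows $X_{t_e}\ge c\,t_e^{H-\delta}>0$, then proves the noise increment $w_t-w_{t_e}$ never dips below $-2t_e^{H-\delta}$ on $[t_e,1]$ (via a derivative computation on $h(t)=Z(t^{H+1/2}-t_e^{H+1/2})-2|t-t_e|^{H-\delta}$), so that $X_t>0$ on $[t_e,1]$. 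Once positivity is secured, $\int_{t_e}^1 b(X_r)\,dr\ge 0$ comes for free, and the large value of $w_1$ does the rest. The key idea you omitted is this two-scale argument: control the drift quantitatively only on $[0,t_e]$, and control only its \emph{sign} on $[t_e,1]$.

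For $\gamma\ge 0$ your Gronwall approach is different from the paper's (which also runs the two-scale argument via a buckling estimate on $\|X\|_{\infty;t}$ for $t\le t_e$), and is in principle viable since $Z^\gamma=o(Z)$. However you have not checked that the explicit constants $12$ and $1/4\wedge 8^{-\gamma}$ in \eqref{eq:escape_te} actually close the estimate for your route; the paper's constants are tuned to its own argument, not to a global Gronwall bound.
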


\begin{proof}
  { For convenience we will use the upper bound for $A^{\pm}$ in computations for both cases, to make generalization as obvious as possible, which we will denote by $A$, i.e. $A = \max( A^+, A^- )$.} With no loss in generality we assume that $X_0 \geq 0$, and in fact by monotonicity of the flow we can simply assume $X_0=0$.

First we treat the case of $\gamma > 0$. Set $\norm{ X }_{\infty;t} := \sup_{r <t } \absv{X_r}$. We claim that for $t < t_e$:

\begin{equation}\label{eq:x_sup_est}
\norm{ X }_{\infty;t} \leq 8 t^{H-\delta}  + 2| Z| t^{H+1/2}
\end{equation}

We use Young inequality $ab \leq \gamma a^{1/\gamma} + (1-\gamma) b^{1/(1-\gamma)}$ for $a,b>0$ between first and the second line: 
\begin{align*}
\norm{ X}_{\infty;t} & \leq A t \norm{ X }_{\infty;t}^{\gamma}  + |Z |t^{H+1/2} + 2 t^{H-\delta} \\
& \leq A t^{1+H(\gamma-1)} \left( (1-\gamma) t^{H} + \gamma \norm{ X}_{\infty;t} \right) + Z t^{H+1/2} + 2 t^{H-\delta} \\
& \leq A(1-\gamma) t^{1+H\gamma} + 2 t^{H-\delta} + \norm{ X }_{\infty;t} t^{1+H(\gamma-1)} A  + Z t^{H+1/2}
\end{align*}

Using \eqref{eq:escape_te}, we have for $t \leq t_e$, \[ A(1-\gamma) t^{1+H\gamma} + 2 t^{H-\delta} = 2t^{H-\delta} \left( 1 + A(1-\gamma) t^{1+H(\gamma-1)+\delta} \right) \leq 4 t^{H-\delta} \]

Upon reusing the same inequality  for the term with supremum norm in the chain of inequalities above, the inequality \eqref{eq:x_sup_est} follows at once. It then follows that within the range of applicability of \eqref{eq:x_sup_est} we have:

\[ \absv{ \int_0^t \sgn{X_r} \absv{ X_r }^{\gamma} dr } \leq 8^{\gamma}t^{1 + H \gamma - \gamma \delta} + 2^{\gamma} |Z|^{\gamma} t^{1 + \gamma(H+1/2)}. \]

We consider again $t < t_e$ and obtain\begin{equation}\label{eq:x_te_bound}
\begin{aligned}
\forall t < t_e,\;\;X_t \geq &  - A 8^{\gamma} t^{1 + H \gamma - \gamma \delta} - A 2^{\gamma} Z^{\gamma} t^{1 + \gamma(H+1/2)} + Z t^{H+1/2} - 2 t^{H-\delta} \\
\geq & - A 8^{\gamma} t^{1 + H \gamma - \delta} + Z t^{H+1/2} \left( 1 - 2^{\gamma} A Z^{\gamma-1} t^{1+ H(\gamma-1)} \right) - 2 t^{H-\delta} \\
\geq & Z t^{H+1/2} /2 - A 8^{\gamma} t^{1+H \gamma - \delta} - 2 t^{H-\delta} > Z t^{H+1/2} /2 - 4 t^{H-\delta} 
\end{aligned}
\end{equation}
where between the second and third line we have used \eqref{eq:escape_te} again the term in the bracket is bounded from below by $1/2$ - (one bounds $Z^{\gamma-1} < 1$ and $A t^{1+H(\gamma-1)} < 1/2$ for all $t < t_e$), and we also use this assumption in the last inequality.

We then use \eqref{eq:normal_and_bridge_assumption} and obtain
\[ X_{t_e} \geq 12 t_e^{H-\delta} /2 - 4 t^{H-\delta} \geq 2t_e^{H-\delta}. \] 
The goal is now to show that this also holds for every $t > t_e$, which will allow us to ignore the drift - if $X$ will not change sign, then 
we will have $\inf_{t \in [t_e,1]} \int_{t_e}^t  { f(X_r) } dr > 0$. To this end, we will show that:

\begin{equation}\label{eq:noise_not_too_bad}
\forall t > t_e, \;\; w_t - w_{t_e} \geq -  2t_e^{H-\delta} 
\end{equation}

Note that for $t < 2 t_e$ the inequality is immediately true, because we estimate the path $u_t$ using its H\"older bound, so that $\forall t \in (t_e, 2t_e)\;\;\absv{ u_t - u_{t_e} } < 2 t_e^{H-\delta}$, therefore we need to show that it holds for the remainder of the interval, that is $[2t_e, 1]$. We can then write the lower bound:

\[ w_t - w_{t_e} \geq Z \left( t^{H+1/2} - t_e^{H+1/2} \right) - \absv{ t - t_e }^{H-\delta} 2 =: h(t) \]

As announced, we set out to show $h'(t) > 0$ for $ t> 2t_e$. A simple computation yields:

\[ h'(t) = Z (H+1/2) t^{H-1/2} - (t-t_e)^{H-1-\delta} 2 \]

 {
Let $s \geq 1$ and set $t = s t_e$. Then we obtain the lower bound of \[ h'(st_e) > 12 (H+1/2) t_e^{H-1-\delta} s^{H-1/2} - 2(s-1)^{H-1-\delta} t_e^{H-1-\delta}  \] After rearranging it amounts to checking that for $s \geq 2$ there holds \[ s^{H-1/2} (s-1)^{1-H+\delta} > s^{1/2+\delta} 2^{H-1-\delta} > \inv{6(H+1/2)} \]  One checks that the rightmost inequality is satisfied for $s^{1/2+\delta} \geq 2/3$, \blue{and in particular under the assumed condition that $s \geq 2$}. Therefore $h'(t) > 0$ for any $t > 2t_e$ and $\inf_{t \in [t_e, 1]} h(t) > - 2t_e^{H-\delta}$. These two together yield \eqref{eq:noise_not_too_bad}. Therefore we can write for any $t > t_e$, using $\int_{t_e}^t  { f(X_r) } dr > 0$:
}

\begin{equation}\label{eq:xt_bound}
X_t  = X_{t_e} + X_t - X_{t_e} \geq t_e^{H-\delta} + Z \left( t^{H+1/2} - t_e^{H+1/2} \right) - 2 \absv{ t - t_e }^{H-\delta} 
\end{equation}

We have just established that the right hand-side is positive. Moreover for $t=1$ one has \[ X_1 \geq Z \left( 1 - t_e^{H+1/2} \right) - 2 \geq 10 t_e^{-1/2-\delta} \cdot \left( 1 - t_e^{H} - 2 t_e^{1/2+\delta} \right) \geq t_e^{-1/2-\delta}\]

which holds due to the assumption on $t_e$.

For the case of $\gamma < 0$ instead of buckling the inequality to get the estimate on $\norm{ X }_{\infty}$, we use the third point from Definition \ref{def:getting_out_event} and the assumption \eqref{eq:drift_constant_bound}. This yields:

\[ X_t \geq - V t^{1+H \gamma -  \delta} + Z t^{H+1/2} + u_t \]

Applying the same computations to noise dependent terms we have:

\[ X_{t_e} \geq - V t_e^{1+H\gamma - \delta} + Z t_e^{H+1/2} - 2 t_e^{H-\delta} > 10 t_e^{H-\delta} - V t_e^{1+H \gamma - \delta} \]

We impose then $V < 7 t_e^{-\left(1 + H(\gamma-1)- \delta \right)}$. We now follow the identical reasoning to show that $X > 0$ for all $t > t_e$ and the proof of the first claim is finished.
\end{proof}

\begin{lemma}\label{lem:getting_out_proba}
For the event $A_g$ as defined in Definition \ref{def:getting_out_event} there holds: 

\begin{enumerate}
\item If the past process $P_t := P^{(-\infty,0)}_t$ is in the admissible state  {with $c_{A,f} + c_{A,c} \leq 1$}, then the conditions of Lemma \ref{lem:getting_out} are met for  $\wh_t = Z_g t^{H+1/2} + b^H_t + P_t$.
\item $A_g$ is independent from $\FF_0$,
\item there exists $\lambda_g > 0$ such that $\PP^{\FF_0}\left( A_g \right) \geq \lambda_g$.
\end{enumerate}

\end{lemma}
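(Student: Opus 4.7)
The plan is to verify the three items in sequence; the substance lies in the quantitative lower bound for $\PP^{\FF_0}(A_g)$.

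For item (1), first I would split $\wh_t = N_t + P_t$ as in \eqref{eq:new_past} and then apply Lemma \ref{lem:new_noise_bridge_decomp} to write $N_t = C_H Z_g t^{H+1/2} + b^H_t$, putting $\wh$ into the form \eqref{eq:noise_decomposition_deterministic} with $u_t := b^H_t + (P_t - P_0)$. Condition (1) of $A_g$ and Lemma \ref{lem:new_noise_bridge_decomp} give $\norm{b^H}_{\ccc{H-\delta};[0,1]} \leq 1$. For the past process, I would invoke the admissibility hypothesis $P^{(-\infty,0)} \in \mathbf{A}(c_{A,f},1)$: for $0 \leq s < t \leq 1$, writing $P_t - P_s = P^{(-\infty,0],s}_{t-s}$ and splitting the integration domain at $-1$, Definition \ref{def:admissibility} (with $c_{A,c}=1$ and $(1+s)^{-(\sigma-H-2\delta)\xi} \leq 1$ on $[0,1]$) bounds the remote-past contribution by $c_{A,f}(t-s)^{\sigma}$ and the recent-past contribution by $(t-s)^{H-\delta}$; summing yields $\norm{u}_{\ccc{H-\delta};[0,1]} < 2$ once $c_{A,f}$ is taken small as in Definition \ref{def:stage2_constants}. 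Conditions (2) and (3) of $A_g$ are precisely hypotheses \eqref{eq:normal_and_bridge_assumption} and \eqref{eq:drift_constant_bound} of Lemma \ref{lem:getting_out}.

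For item (2), I would observe that $Z_g$ and $b_g$ are measurable functions of $B|_{[0,1]}$, which is independent of $\FF_0$ by construction. Conditions (1) and (2) of $A_g$ therefore have $\FF_0$-independent probabilities; condition (3) depends on $X_0 \in \FF_0$ through the flow, but Theorem \ref{thm:girsanov} provides a Gaussian tail for the drift integral with a constant not depending on $X_0$, so $\PP^{\FF_0}(A_g)$ is bounded below by a deterministic quantity independent of $X_0$.

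For item (3), I would combine three estimates. The Brownian bridge $b_g$ has a.s.\ finite $\ccc{1/2-\delta}$ norm, so $p_1 := \PP(c_{H,b} \norm{b_g}_{\ccc{1/2-\delta}} \leq 1) > 0$. Since $Z_g \sim \mathcal{N}(0,1)$ is independent of $b_g$, a lower bound on the Gaussian density over the interval $(12 t_e^{-1/2-\delta},\,12 U t_e^{-1/2-\delta})$ yields $p_2 \geq C_B \cdot 12(U-1)\, t_e^{-1/2}\exp(-t_e^{-1-2\delta})$. For $\gamma<0$, I would apply Theorem \ref{thm:girsanov} with $f \propto b$: since on a bounded interval $K \lesssim \norm{\int_0^{\cdot} b(X_r) dr}_{\ccc{1+H\gamma}}$, Markov's inequality in Gaussian form gives
\[ \PP^{\FF_0}\Bigl(K > 9\, t_e^{-(1+H(\gamma-1)-\delta)}\Bigr) \leq C_G \exp\Bigl(-9\lambda\, t_e^{-1-2(1/2+H(\gamma-1))}\Bigr). \]
Independence of $Z_g$ and $b_g$ together with inequality \eqref{eq:girsanov_proba} (calibrated in Definition \ref{def:stage2_constants} so that the success probability beats the failure probability) then gives $\PP^{\FF_0}(A_g) \geq \lambda_g > 0$. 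The main obstacle is precisely this quantitative matching: the success exponent $t_e^{-1-2\delta}$ must be dominated by the failure exponent $t_e^{-1-2(1/2+H(\gamma-1))}$ as $t_e \to 0$, which requires $H(1-\gamma) < 1/2 - \delta$, i.e.\ essentially the standing assumption $\gamma > 1 - \tfrac{1}{2H}$; the consistent choice of constants making this possible is exactly the content of Proposition \ref{prop:stage2_constants}.
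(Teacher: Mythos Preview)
Your proposal is correct and follows essentially the same route as the paper: the decomposition $\wh_t = Z_g t^{H+1/2} + b^H_t + P_t$ via Lemma~\ref{lem:new_noise_bridge_decomp}, the admissibility bound on $P$, and the Gaussian lower bound on $\PP(Z_g \in \cdot)$ combined with the Girsanov tail on $K$ and inequality~\eqref{eq:girsanov_proba} are exactly what the paper uses. You are in fact slightly more careful than the paper on item~(2): for $\gamma<0$ the event $A_g$ is not literally independent of $\FF_0$ (condition~(3) involves $X$, which depends on the past), and you correctly observe that what is actually needed---and what Theorem~\ref{thm:girsanov} delivers---is only a uniform deterministic lower bound on $\PP^{\FF_0}(A_g)$, which is precisely item~(3).
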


\begin{proof}
The second claim is immediate by the independence of Brownian increments. We can decompose $\wh$ as follows - the last two terms constitute the assumed path $(u_t)_{t \in [0,1]}$.

\[ \wh = Z t^{H+1/2} + b^H_t + P_t \]

By the admissible condition we have  { \[ \norm{ P }_{\ccc{H-\delta};[0,1]} \leq \norm{ P^{(-\infty,-1)} }_{\ccc{1};[0,1]} + \norm{ P^{(-1,0)} }_{\ccc{H-\delta};[0,1]} \leq 1 \] }and by Lemma \ref{lem:new_noise_bridge_decomp} we have $\norm{ b^H }_{\ccc{H-\delta}} \leq c_{H,b} \norm{ b }_{\ccc{1/2-\delta}} \leq 1$, which yields $\norm{ u }_{\ccc{\alpha}} \leq 2$ as assumed in Lemma \ref{lem:getting_out}.

For $\gamma > 0$, 
it suffices to get the bounds on $Z$ and $\norm{ b }_{\ccc{1/2-\delta};[0,1]}$ and we get

\[ \PP^{\FF_0}\left( A_g \right) \geq \PP\left( Z > C t_e^{-1/2-\delta}, \norm{ b }_{\ccc{1/2-\delta}} < c_{H,b}^{-1} \right) = \lambda_g \]

and the fact that $\lambda_g > 0$ is easy to see using independence of $Z$ and $b$.

For $\gamma < 0$ we need to include a bound on  {$V$}. 
 By the  {Gaussian tails of $V$} (Theorem \ref{thm:girsanov})  {with \blue{$\lambda = \norm{ f }_{\ccc{\gamma}}$}} we  {observe that there exists a constant $C_G$ such that}: 

\begin{equation}\label{eq:gaussian_tails_drift}
\EE^{\FF_0} \exp\left(  V^2 \right) \leq C_G 
\end{equation}

 {where the lower index in $C_G$ comes from \textsl{gaussian}}. With this bound at hand we can estimate the probability of the event $A_g$ using Gaussian tails of $V$ - the condition in Lemma \ref{lem:getting_out} reads \[ V < 9 t_e^{-(1+H(\gamma-1)-\delta)} \]

We then compute the probability of the event $A_g$ - $C_B$ below is the probability of the bridge part, which is independent from $t_e$.

\begin{equation}\label{eq:gamma_neg_escape_event}
\begin{aligned}
\PP\left( A_g \right) & = \PP\left( Z \in (12t_e^{-\delta-1/2}, 12 U t_e^{-1/2-\delta}), \norm{ b } < c_H^{-1}, V < 9 t_e^{-(1+H(\gamma-1) - \delta)} \right) \\ & > C_B 12 (U-1) t_e^{-1/2} \exp\left( - t_e^{-1-2\delta} \right) -  { C_G \exp\left( - 9 t_e^{-1 - 2(1/2+H(\gamma-1))} \right) },
\end{aligned}
\end{equation}
and the result follows since we have assumed that this quantity was positive, see \eqref{eq:girsanov_proba}.
\end{proof}


\begin{remark}\label{rem:negative_correlations}
An important drawback of this approach is that for $H < 1/2$ we cannot show that the noise is in the admissible state right after escaping the strip $[-1,1]$. This comes from considering the restarted noise, that is:

\[ t>1\;P^{(0,1),1}_t = \int_0^1 (t-r)^{H-1/2} - (1-r)^{H-1/2} dB_r = \int_0^1\left( (t-r)^{H-1/2} - (1-r)^{H-1/2} \right) Z dr + b^H_t. \]

The first part is clearly going to be negative and one gets the bound 
\begin{equation}\label{eq:z_contrib_approx}
t^{H-\delta} 2 \absv{Z} \sim 4 t^{H-\delta}  t_e^{-1/2}.
\end{equation} Since we have an interest of taking $t_e$ very small and the admissibility constant is smaller than one, so in particular smaller than the right hand-side of \eqref{eq:z_contrib_approx}, then it follows that the H\"older constant of the first term is too big. In other words, when the positive noise in $(0,1)$ becomes part of the past, it influences the trajectory in the opposite way. This is coherent with negative correlations of increments of $\wh$ in this case. Indeed, consider for simplicity a Riemann-Liouville fBm, that is $\wi_t = \int_0^t (t-r)^{H-1/2} dB_r$ and write for some $t > s$

\[ \EE \left[ \wi_s \left( \wi_t - \wi_s \right) \right] = \int_0^s \left( s- r \right)^{H-1/2} \cdot \left( \left( t- r \right)^{H-1/2} - \left( s - r \right)^{H-1/2} \right) dr \]
 
 which for $H<1/2$ is clearly negative. At the same time, if we pick some $u>1$ and instead compute $\EE \left[ \wi_s \left( \wi_{t+u} - \wi_{s+u} \right) \right]$, then this dependence will decay up to absolute value as $\absv{t-s}^{\beta} u^{H-\frac{1}{2}-\beta}, \beta > 0$.
\end{remark}

\begin{lemma}\label{lem:wait_above_before_the_climb}
Let $t^*, c_{A,f}, t_e, U, K_A$ and $\vartheta$ be as fixed in  {Definition \ref{def:stage2_constants}}. Let $M^*(t_e^{-1}, U)$ be the function defined in \eqref{eq:vartheta_lower} and recall that $M^*(t_e^{-1},U) > 1$. If we define 
\begin{equation}\label{eq:waiting_second_part_event}
 { A_s \quad = \quad \left\{ X_{1+t^*} > M^*(t_e^{-1},U), \;\;\; P^{(-\infty,1+t^*)} \in \mathbf{A}(K_{A^+}, K_{A^+}) \right\}},
\end{equation}

then there exists $\lambda_w$  {$>0$ } such that 
\begin{equation*}
\PP^{\FF_1}\left(  { A_s} \right) \geq \lambda_w \mbox{ a.s.} \mbox{ on }A_g.
\end{equation*}
\end{lemma}

\begin{proof}

After Lemma \ref{lem:getting_out} we may have lost admissibility after leaving the critical strip (Remark \ref{rem:negative_correlations}). We claim that with this choice of $t^*$ one can with positive probability sample Brownian increments such that admissibility is going to be recovered. We recall that $K_A := K(A,1)$ is taken to be uniform in other constants appearing in Definition \ref{def:stage2_constants}.

The constant $t^*$ is chosen such that almost surely there will hold:

\begin{equation}\label{eq:decay_new_past}
\forall u,s >0\;\;u^{-1} (1+s)^{H+2\delta-1} \absv{ P^{(0,1),1+t^*+s}_u }  \leq K_A/3,
\end{equation}

and will construct the event $A_w$ such that

\begin{equation}\label{eq:decay_new_noise}
\forall u,s > 0\;\;u^{-1} \left( 1 + s \right)^{H-1} \absv{ P^{(1,t^*),1+t^*+s}_u } \leq K_A/3 \quad u^{-H+\delta} P^{(t^*,1+t^*),1+t^*+s}_u \leq 1.
\end{equation}

In what follows we will show that for  { $A_s$ as defined in \eqref{eq:waiting_second_part_event} we have $A_s \supset A_w$ } and further that there exists $\lambda_w$ such that $\PP^{\FF_1}\left( A_w \right) > \lambda_w$ a.s. on $A_g$.  { Precise definition of $A_w$ is given later in \eqref{eq:aw_def}.}

Let us outline how the second one part of $A_s$ holds if we sample the noise satisfying \eqref{eq:decay_new_past} and \eqref{eq:decay_new_noise}. Automatically the recent part of admissibility is satisfied, so that we need to check the remote part. Assuming that \eqref{eq:decay_new_past} and \eqref{eq:decay_new_noise} hold and using the admissibility of $(-\infty,0)$ - (recall that at the beginning of Stage 2 in Proposition \ref{prop:stage2_prop} we assumed that $P \in \mathbf{A}(c_{A,f}, c_{A,c})$ with $c_{A,f} < 1/3$) one obtains:

 {
\begin{align*}
 P^{(-\infty,t^*),1+t^*+s}_u \leq & u^{1} (1+s)^{(1-\alpha) \lor 0} c_{A,f} + u^{1} (1+s)^{H+2\delta - 1} K_A /3 + u^{1} (1+s)^{H-1} K_A / 3
 \\ \leq & u^{1} (1+s)^{(1-\alpha) \lor 0}  K_A,
 \end{align*}
}

where the second inequality  {follows by observing that the first term is going to dominate the others and gathering the constants.} 

\paragraph{\textbf{Choice of $t^*$ and the proof of \eqref{eq:decay_new_past}}}
Initially consider the case $s=0$ in \eqref{eq:decay_new_past}, which follows by the second claim in Lemma \ref{lem:bridge_in_the_past} taken with $\beta = 1$, setting $\absv{Z } < U t_e^{-1/2-\delta}, U > 1$ and $w = 1+t^* = U^{\vartheta} t_e^{-(1/2+\delta)\vartheta}$. The parameter $\vartheta$ is fixed to satisfy: 

\begin{equation}\label{eq:decay_negative}
U t_e^{-1/2-\delta} U^{\vartheta(H-1/2-1)} t_e^{-\vartheta(H-1/2-1)(1/2+\delta)} \leq K_A/3.
\end{equation}

This is the same constraint as the first one in \eqref{eq:teu} but with $1$ instead of $\beta$. It is shown in the proof of Definition \ref{def:stage2_constants} that $\beta > \inv{\vartheta} + H - \inv{2}$ and we assumed there that $1 \geq \beta$. (see Remark \ref{rem:vartheta_inftwo} below). To obtain the decay for $s > 1$ we take $w = U^{\vartheta} t_e^{-(1/2+\delta)\vartheta} + s$, $\beta = H+2\delta$ in Lemma \ref{lem:bridge_in_the_past} and then replace \eqref{eq:decay_negative} with interpolation:

\[ U t_e^{-(1/2+\delta)} w ^{H-1/2-1} \leq U t_e^{-(1/2+\delta)} \left( U^{\vartheta} t_e^{-(1/2+\delta)\vartheta} \right)^{1/2-2\delta} (1 \lor s)^{H+2\delta -1} \leq K_A / 3 \cdot (1 \lor s )^{H+2\delta-1}, \]

where we have used \eqref{eq:teu} in Definition \ref{def:stage2_constants}.

\paragraph{\textbf{The proof of $\inf_{t \in [1,1+t^*]} X_t > 1$ }}
\smallskip
We will now detail the estimates necessary to show that on the event $A_w$ the path $X_t$ remains bounded away from one and as a byproduct we will prove that \eqref{eq:decay_new_noise} holds with positive probability. To this end, let us first decompose the dynamics of $X_t$ for $t \in [1,1+t^*]$. We have, making use of the fact that the for all $r \in [t_e,1]$ one has $X_r > 0$ (cf. the proof of Lemma \ref{lem:getting_out}):

\begin{equation}\label{eq:xt_lower_bd}
X_t \geq \int_0^1 (t-r)^{H-1/2} dB_r +  \int_0^{t_e}  { f(X_r) } dr + \int_1^t (t-r)^{H-1/2} dB_r + \int_1^t  { f(X_r) } dr + P^{(-\infty,0),0}_t .
\end{equation}

In what follows we will use this decomposition to show that $\inf_{t \in [1,1+t^*]} X_t > 1$. For $t \geq 1$ one exploits Lemma \ref{lem:new_noise_bridge_decomp} with $z = t-1$ therein and obtains:

\begin{equation}\label{eq:new_noise_past_one}
\begin{aligned}
\int_0^t (t-r)^{H-1/2} dB_r = & \int_0^1 (t-r)^{H-1/2} dB_r + \int_1^t (t-r)^{H-1/2} dB_r \\
= & Z h(t) +  { b^{H,t-1}_{1,0} } + \int_1^t (t-r)^{H-1/2} dB_r 
\end{aligned}
\end{equation}

 where $h(t) \simeq t^{H-1/2} \lor 1$ is as given in Lemma \ref{lem:new_noise_bridge_decomp}.  {Similarly, as denoted there $b^{H,z}_{1,0} = \int_0^1 (1+z-r) db_r$}. We aim at estimating the worst case scenario for the noise, namely:

\begin{equation}\label{eq:worst_case}
 \inf_{t \in [1,1+t^*]} \wh_t = \inf_{t \in [1,1+t^*]} \left[ Z h(t) +  { b^{H,1}_{1,0} } +  \int_1^t (t-r)^{H-1/2} dB_r + P_t^{(-\infty,0)} \right]  > 1.
 \end{equation}  
 
By the assumptions in Definition \ref{def:getting_out_event} and from the last claim of Lemma \ref{lem:new_noise_bridge_decomp}, where the bound is uniform in $t$ one has:
 \begin{equation}\label{eq:bridge_bound}
 \sup_{t \in [1, 1+t^*]}  { \absv{ b^{H,t-1}_{1,0} } } < c_H \norm{ b_g }_{\ccc{1/2-\delta}} \leq 1.
 \end{equation} For any $a<b$ we can define for $\delta > 0$ fixed before:

\begin{equation}\label{eq:mnorm}
M(a,b) = \inv{\absv{b-a}^{\delta}} \sup_{a<u<v<b} \frac{\absv{B_v - B_u}}{ \absv{v-u}^{1/2-\delta}} .
\end{equation}

The scaling is chosen such that $M(a,b)$ has the same distribution for any values of $a,b$. 
We will now show that:
 
\begin{equation}\label{eq:rl_bound}
 { \forall t \in [1, 1+t^*], }\; \absv{  \int_1^t (t-r)^{H-1/2} dB_r } \ls_H (1+t)^{H+\delta} M(1,t^*) + M(t^*,1+t^*).
\end{equation}
 
The two norms are coming from the fact that we consider the Brownian increments on two separate intervals $[1,t^*], [t^*,1+t^*]$. For the first term we employ directly \eqref{eq:long_term_bound_wh} and the last bound on $t_e^{-1}$ in \eqref{eq:ca_tstar}:

\begin{equation}\label{eq:onetot_first}
\sup_{z \leq 1} \sup_{t < t^*}\; \frac{ \absv{ \int_1^t (t+z-r)^{H-1/2} dB_r } }{ (1+t)^{H+\delta} } \leq 2 C_H M(1,t^*) .
\end{equation}

For the remaining part, that is for $t \in [t^*, 1+t^*]$ one applies exactly \eqref{eq:ibp} because the length of the interval is equal to one. We then have:

\begin{equation}\label{eq:onetot_second}
\sup_{z \in (0,1)}\absv{ \int_{t^*}^{t^*+z} (t^*+z-r)^{H-1/2} dB_r } \leq c_H M(t^*,1+t^*) .
\end{equation}

 
 
Thus we proved \eqref{eq:rl_bound}. To finish the proof of the estimate \eqref{eq:worst_case}, we need to get a bound on $P^{(-\infty,0)}_t$. By the choice of $c_{A,f}$ done in Definition \ref{def:stage2_constants}:

\begin{equation}\label{eq:past_lower_bd}
\begin{aligned}
P^{(-\infty,0)}_t \leq & c_{A,f} (1+t)^{1} + c_{A,c} (1+t)^{H+\delta}  \leq U^{-\theta/2} (1+t^*)^{H+\delta-1} (1+t)^{1} + c_{A,c} (1+t)^{H+\delta} \\
\leq & 2U^{-\vartheta/2} (1+t)^{H+\delta},
\end{aligned}
\end{equation}

where the last inequality comes from the fact that the first term then is smaller than $U^{-\theta/2} (1+t)^{H+\delta}$ for all $t < t^*$. Before proceeding further we can finally specify the event $A_w$:

\begin{equation}\label{eq:aw_def}
A_w = \{ M(1,t^*) \leq K_A U^{-\vartheta(1/2+\delta)} / 4, M(t^*,1+t^*) < K_A \} .
\end{equation}
 
Note that we always have $K_A < 1$, so in what follows this dependence is going to be ignored. Then putting \eqref{eq:bridge_bound}, \eqref{eq:rl_bound} and \eqref{eq:past_lower_bd} into \eqref{eq:worst_case} one obtains that for some constant $C > 1$ that depends only on $H, \delta$.
\begin{equation}\label{eq:wh_lower_bound}
\wh_t > 12 h(t-1) - C (1+t)^{H+\delta} U^{-\vartheta/2} / 4- 1,
\end{equation}
 
where $h(s) = s^{H-1/2} \lor 1$ for $H >1/2$ and $s^{H-1/2}$ for $H<1/2$ (see Lemma \ref{lem:new_noise_bridge_decomp}).  Recall $Z \geq 12t_e^{-1/2-\delta}$ and $t^* = \left( U t_e^{-1/2-\delta} \right)^{\vartheta}$. 

To finish the proof we have to show that \eqref{eq:wh_lower_bound} is bigger than one. First we will get rid of the constant $-1$ we need $Z h(t-1) - 1 > Z h(t-1) / 2$. For $H>1/2$  we have $\inf_{t \in [1,1+t^*]} h(t-1) > 1$ and $Z > 12 t_e^{-1/2-\delta}$, which is enough. If $H < 1/2$, we get our claim by considering $\inf_t h(t-1) = (t^*)^{H-1/2} = t_e^{-(1/2+\delta)\vartheta(H-1/2)} U^{\vartheta(H-1/2)}$ and applying the assumption \eqref{eq:vartheta_lower} we get the claim. With \eqref{eq:wh_lower_bound} simplified we can set:

\[ g(t) = 6t_e^{-1/2-\delta} h(t-1) /2 - C (1+t)^{H+\delta} U^{-\vartheta/2} ,\]

and then require $g(t) > 1$. This clearly holds for $t < 2$, because then $6 t_e^{-1/2-\delta} - C 2^{H+\delta} U^{-\vartheta/2} > 1$. For $t>2$ it suffices to check:

\begin{equation}\label{eq:min_g_lower_bd}
1 - t_e^{1/2+\delta} C (1+t)^{H+\delta} t^{1/2-H} U^{-\vartheta(1/2+\delta)}  / 24 > c > 0.
\end{equation}

By inserting the definition of $t^*$ and the assumption in $\delta$ given by \eqref{eq:ca_tstar} we have:

\begin{align*}
 \sup_{t < t^*} Z^{-1} C (1+t)^{H+\delta} t^{1/2-H} U^{-\vartheta(1/2+\delta)}  / 4 & \leq 2  t_e^{1/2+\delta} U^{-\vartheta(1/2+\delta)} t_e^{-(1/2+\delta)^2\vartheta} U^{\vartheta(1/2+\delta)} (t^*)^{\delta} / 24 \leq  \\
 & \leq 2 t_e^{1/2 - \vartheta/2 - \delta} / 4 .
 \end{align*}
 
 As $\vartheta < 2$ and $t_e^{-\delta} < 2$ (\eqref{eq:ca_tstar}), the claim \eqref{eq:min_g_lower_bd} is readily proven. This implies:
 
 \[ \inf_{t \in [1,1+t^*]} \wh_t > Z h(t-1) /2 > t_e^{-1/2-\delta} (t^*)^{H-1/2} = t_e^{-1/2-\delta + \vartheta(H-1/2)} U^{\vartheta(H-1/2)} = g(t^*). \]
 
 By \eqref{eq:teu} this is bigger than one and as a consequence one can rewrite \eqref{eq:xt_lower_bd} as:
 
 \[ \blue{\forall t \in [1,1+t^*]}\;\; X_t > g(t^*) - t_e^H + \int_1^t  { f(X_r) } dr.\]
 
The first two terms are $M^*$ in \eqref{eq:vartheta_lower} and their sum is bigger than one. \blue{The last integral is (strictly) positive as long as $X >0$ on $[1,t]$. We then conclude by an argument by contradiction : if $X$ does not stay above $1$ on $ [1,1+t^*]$, let $t$ be the first time for which $X_t=1$, the above estimate implies $X_t > 1$, a contradiction.} 


\paragraph{\textbf{The proof of \eqref{eq:decay_new_noise}}}

We want to get bound on the past process in terms of $M(1,t^*)$ and $M(t^*,1+t^*)$ imposed in \eqref{eq:rl_bound}. To this end, we bound the norms from Lemma \ref{lem:recent_past}. One observes that on the event $A_w$ defined by \eqref{eq:aw_def}:

\[ L(1,t^*) = \sup_{u\in[1,t^*]} \frac{ \absv{ B_{t^*} - B_u } }{ (1+t^*- u)^{1/2+\delta} } \leq (t^*)^{3\delta} M(1,t^*) \leq (t^*)^{3\delta} U^{-\vartheta/2} K_A / 4. \] 

Therefore, inserting this bound to the ones in Lemma \ref{lem:recent_past} one has:

\begin{align*}
\sup_{u,s>0} u^{-1} (1\lor s)^{1-H} \absv{ P^{(1,t^*),1+t^*+s}_u} & \leq (t^*)^{3\delta} K_A U^{-\vartheta/2} / 4  \\ \sup_{u>0} u^{-H+\delta} P^{(t^*,1+t^*),1+t^*+s}_u & \leq K_A.
\end{align*}

Along with estimates from the beginning of the Lemma, this finishes the proof of admissibility. Note that since, in the definition \eqref{eq:aw_def} of $A_w$ the constants in  are fixed, and the two norms are independent (and have the same distribution as $\norm{ B }_{\ccc{1/2-\delta};[0,1]}$ via scaling in Lemma \ref{lem:subgaussian_scaling}), it holds that for some $\lambda_w$ one has $\PP^{\FF_1{}}\left( A_w \right) > \lambda_w$.
\end{proof}

\begin{remark}\label{rem:vartheta_inftwo}
Note that the constraint $\vartheta < 2$ is very natural - we pick any $\beta$ such that $\beta > \inv{\vartheta} + H - 1/2$. If we could take $\vartheta > 2$, this would mean that the exponent in the bound on the growth of $s \mapsto P^{(0,1)}_s$ is smaller than $H$, which is absurd.
\end{remark}

\subsection{Stage 3}\label{subsec:stage_three}

We state the main results of the Section in the following proposition. We take $q$ as in \eqref{eq:fixed_five},  {namely
\begin{equation*}\label{eq:fixed_five}
 \begin{cases} q \in (2^{\inv{\alpha}}, 5^{\inv{\alpha}} ) & \gamma < 0 \\ q \in (1,1+\inv{2K_{\gamma}}) & \gamma > 0 \end{cases}
 \end{equation*}
}and define the sequence of geometric progression

\begin{equation}\label{eq:geometric_times}
T_k =  \sum_{j =1}^k q^k.
\end{equation}


 {Recall as well that $K_A$ has been defined in Definition \ref{def:stage2_constants} in \eqref{eq:fixed_four}}.

\begin{proposition}\label{prop:stage3_prop}
Let $\rho$ be a stopping time such that, a.s.:

\[ P^{(-\infty,\rho)}_{\cdot} \in \mathbf{A}(K_A, K_A) \mbox{ and } \quad \absv{ X_{\rho} } > 1. \]

Let $X_{\rho} > 0$ (respectively $<0$). Let $\alpha$ be as fixed in Theorem \ref{thm:main}. Then define $\tau$ as

\[ \tau = \inf\{ t >0 : \quad X_{\rho+t} < \varphi(1,A^+ t) - t^{\alpha} \} \]

(respectively $X_{\rho+t} > \varphi(-1, A^- t) + t^{\alpha}$ if $X_{\rho} < 0$). Then there exists a $\NN$-valued random variable $k_f$ such that $\tau \geq \rho + T_{k_f}$ and constants $\lambda_3,b, C > 0$ such that:

\[  \PP^{\FF_{\rho}}\left( k_f = \infty \right) \geq \lambda_3, \; \quad \EE^{\FF_{\rho}} \left[ \exp\left( b T_{k_f}^{2(\alpha-H) } \right) \indic{k_f < \infty}\right] \leq 1 + b C  .\]

\end{proposition}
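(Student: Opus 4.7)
The plan is to take $k_f$ essentially as in \eqref{eq:delta3_def}, namely the first index $k$ for which $\norm{B}_{\ccc{1/2-\delta};[\rho+T_k,\rho+T_{k+1}]}$ exceeds $q^{k(\alpha-H)}$. By the strong Markov property I reduce to $\rho=0$, and by a sign-flip symmetry I treat only the case $X_0>0$; then I have to verify separately the deterministic claim $\tau \geq T_{k_f}$, the lower bound $\PP(k_f=\infty)\geq \lambda_3>0$, and the exponential moment bound.

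At the core of the proof is a purely deterministic statement: assuming the past process is admissible with constant $K(A)$ at time $0$, $X_0>1$, and $\norm{B}_{\ccc{1/2-\delta};[T_k,T_{k+1}]}\leq q^{k(\alpha-H)}$ for every $k<k_0$, then $X_t>\varphi(1,A^+t)-t^\alpha$ on $[0,T_{k_0}]$. I would prove this by induction on $k$: given the bound at time $T_k$, I split the fBm increment $W^H_t-W^H_{T_k}$ on $[T_k,T_{k+1}]$ into a remote-past piece, which Proposition \ref{prop:ibp} combined with admissibility controls by $K(A)(t-T_k)^H$, plus a new-noise piece, for which the Brownian H\"older bound yields a $\ccc{H-\delta}$-seminorm of order $q^{k(\alpha-H)}$ on an interval of length $q^{k+1}$. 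A comparison argument with the deterministic flow of $\dot x=A^+ x^\gamma$, using monotonicity of $\varphi$ in its initial value, then transfers this noise control into a pointwise lower bound of the form $X_t\geq \varphi(X_{T_k},A^+(t-T_k))-Cq^{k(\alpha-H)}(q^{k+1})^{H-\delta}$, and the geometric schedule $(T_k)$ ensures that the cumulative deviation remains of order $T_{k_0}^{\alpha-\delta}$, strictly below the allowed buffer $T_{k_0}^{\alpha}$. The cases $\gamma>0$ and $\gamma<0$ have to be handled separately, as indicated by the two regimes for $q$ in \eqref{eq:fixed_five}, since the sign of the second derivative of $x\mapsto x^\gamma$ changes and in the negative case one needs an additional argument (in the spirit of Lemma \ref{lem:curve_gam_pos}) to prevent $X$ from dipping back below $\varphi$ after an unfavourable noise excursion. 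This deterministic comparison is the main obstacle.

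For the probabilistic claims, Brownian scaling gives $\norm{B}_{\ccc{1/2-\delta};[T_k,T_{k+1}]}\stackrel{\mathrm{law}}{=}q^{(k+1)\delta}\norm{\tilde B}_{\ccc{1/2-\delta};[0,1]}$ for a standard Brownian motion $\tilde B$, and Fernique's inequality gives Gaussian tails for its H\"older norm on the unit interval. Combined with the independence of Brownian increments on the disjoint intervals $[T_k,T_{k+1}]$, this yields $\PP(k_f=k)\ls \exp(-c\,q^{2k(\alpha-H-\delta)})$, with $\alpha-H-\delta>0$ guaranteed by the choice of constants in Proposition \ref{prop:fixed_constants}. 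Summing in $k$ and taking $q$ large as in \eqref{eq:fixed_five} makes the total failure probability arbitrarily small, hence $\PP(k_f=\infty)\geq \lambda_3>0$. Finally, using $T_{k_f}\ls q^{k_f}$ and the super-geometric decay of $\PP(k_f=k)$, the exponential moment bound reduces to controlling the series $\sum_k \exp\bigl(b\,q^{2k(\alpha-H)}-c\,q^{2k(\alpha-H-\delta)}\bigr)$; for $b$ sufficiently small with respect to $c$ and $q$, a Taylor expansion of $b\mapsto \EE[\exp(bT_{k_f}^{2(\alpha-H)})\indic{k_f<\infty}]$ around $b=0$, together with term-by-term summation, then gives the bound $\leq 1+bC$ claimed in the proposition.
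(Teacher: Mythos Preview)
Your outline is essentially the paper's own proof: the paper reduces to $\rho=0$ by the strong Markov property, defers the deterministic comparison over the geometric intervals $[T_k,T_{k+1}]$ to Lemmas \ref{lem:curve_gam_neg} and \ref{lem:curve_gam_pos} (for $\gamma<0$ and $\gamma>0$ respectively; your reference to \ref{lem:curve_gam_pos} in the negative case is swapped), translates admissibility plus the Brownian H\"older control into the path hypothesis \eqref{eq:path_w_growth} via Lemma \ref{lem:infinite_brownian_holder_norms}, and obtains the two probabilistic conclusions from Lemmas \ref{lem:proba_negative} and \ref{lem:climbing_tails}.

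Two places where your sketch is imprecise. First, the increment $W^H_t-W^H_{T_k}$ decomposes into \emph{three} pieces, not two: besides the pre-$\rho$ past and the fresh Riemann--Liouville noise on $[T_k,t]$, there is the intermediate past $P^{[\rho,\rho+T_k]}$ accumulated during Stage 3 itself, which is not covered by the admissibility condition and requires a separate estimate (Proposition \ref{prop:increasing_noises}) built on the already-assumed bounds $\norm{B}_{\ccc{1/2-\delta};[T_j,T_{j+1}]}\leq q^{j(\alpha-H)}$ for $j<k$. Second, admissibility does not bound the pre-$\rho$ past by $K(A)(t-T_k)^H$: Definition \ref{def:admissibility} gives instead $K(A)(t-T_k)^{\sigma}(1+T_k)^{(H-\sigma+2\delta)\xi}$, and it is exactly the choice $\alpha=\sigma+(H-\sigma+2\delta)\xi$ in \eqref{eq:nu_def} that makes this combine to order $q^{k\alpha}$, matching the buffer. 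Finally, the series $\sum_k\exp\bigl(bq^{2k(\alpha-H)}-cq^{2k(\alpha-H-\delta)}\bigr)$ you write down actually diverges for every $b>0$, since the positive exponent is the larger one; only the Taylor-expansion (moment-by-moment) route you mention afterwards works, and that is precisely what Lemma \ref{lem:climbing_tails} does.
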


\begin{proof}
Again we take $\rho=0$ to simplify the notation below. We approximate $\tau$ by the sequence of geometric times \eqref{eq:geometric_times}. First by Lemma \ref{lem:curve_gam_neg} and Lemma \ref{lem:curve_gam_pos}, in which deterministic conditions on the noise are given - for $k_f \in \NN$ as defined there one observes that $\tau \geq T_{k_f}$.

Then Lemma \ref{lem:infinite_brownian_holder_norms} specifies a growth condition of independent Brownian H\"older norms under which $k_f = \infty$, and in Lemma \ref{lem:proba_negative} we show that there exists $\lambda > 0$ such that $\PP\left( \tau = \infty \right) > \lambda$. The tail estimate in the case of $\tau < \infty$ is shown in Lemma \ref{lem:climbing_tails}.  {The choice of $\alpha$ is important in Lemma \ref{lem:infinite_brownian_holder_norms} }
\end{proof}

We now start with a deterministic bound on the dynamics. We separate the case of $\gamma < 0$ and $\gamma \geq 0$, which have different monotonicity properties for the drift (which is why in the case $\gamma <0$ we cannot apply directly the methods of \cite{DF14, Tre13}).  
\begin{lemma}\label{lem:comparison}
Let  {$f \in \ccc{1}(\RR)$ be non-increasing } and let $\varphi$ be the semi-flow associated to the ODE  { $\dot{z} = f(z)$ }. Moreover, let $(w_t)_{t \geq 0}$ be a real-valued path. Consider the following deterministic ODE, for a given $X_s > 0$ :

\begin{equation}\label{eq:ode_deterministic}
\forall u \in [s,t]\;\; X_u = X_s + \int_s^u  { f(X_r) } dr + w_u - w_s . 
\end{equation}

Assume that there exists a constant$\bar{w}$ such that: 

\[ \sup_{r \in [s,t]} \absv{ w_r - w_s } \leq \bar{w}. \] Then we have the bounds:

\begin{equation}\label{eq:finite_horizon_decreasing}
\forall u \in [s,t]\;\; \varphi(X_s + \bar{w},  u-s) - 2 \bar{w}  \leq X_u \leq \varphi(X_s - \bar{w}, u-s ) + 2 \bar{w}.
\end{equation}

\end{lemma}

\begin{proof}
Without loss of generality assume that $s = 0, t = T$ and $w_0 = 0$. We only show the first bound, the rest being identical up to switching the signs and inequalities where necessary. Let us rewrite the equation as:

\[ \forall t \leq T, \;\; \theta_t = X_t - w_t =  \int_0^t f( \theta_r + w_r) dr.   \]

We will compare to the following ODE, which we can solve explicitly:

\begin{equation}\label{eq:auxiliary_ode}
\bar{\theta}_t = \blue{ \theta_0} + \int_0^t f( \bar{\theta}_r + \bar{w} ) dr , \;\; \bar{\theta}_0 = \theta_0 \blue{=X_0}.
\end{equation}

 \blue{It is easy to check that $\bar{\theta}_t = \varphi( \theta_0 + \bar{w}, t ) - \bar{w}$. 
 It holds that $\pdt \theta_t = f(\theta_t + w_t) = f_t(\theta_t)$ with $f_t( \cdot ) = f(\cdot + w_t)$. Then by the monotonicity of $f$ there holds $f_t(x) \geq f(x + \bar{w})$ and by comparison principle for one dimensional ODEs we have that $\theta_t \geq \bar{\theta}_t= \varphi( \theta_0 + \bar{w}, t) - \bar{w}$. We then finish by observing that}

\blue{
\[ X_t = \bar{\theta}_t + \underbrace{ \theta_t - \bar{\theta}_t }_{\geq 0}  + w_t \geq \bar{\theta}_t - \bar{w}. \]
}
 \end{proof}

 {
\begin{remark}
Assume that $f$ is only $C^1$ (and non-increasing) on a sub-interval $I \subset \RR$, and that $X$ is continuous and such that, for any interval $[u,v] \subset [s,t]$,
\[  X_r \in I\; \forall r \in [u,v] \;\;\Rightarrow \;\;\;X_u = X_v + \int_v^u f(X_r) dr +  w_u - w_v, \]
and that in addition both the r.h.s. and l.h.s. of \eqref{eq:finite_horizon_decreasing} are in $I$. Then it follows by a simple localization argument and an application of Lemma \ref{eq:ode_deterministic}, that \eqref{eq:finite_horizon_decreasing} still holds in this case.

We will use this remark repeatedly below, taking $I = (0,\infty)$ or $I=(-\infty,0)$ and $f(x) = sign(x) |x|^{\gamma}$, for $\gamma<0$.
\end{remark}
}
Recall that $\varphi=\varphi(x,h)$ is the (semi-)flow associated to $\dot{z}=z^{\gamma}$ i.e. for $x>0$, $ \varphi(x,0)=x, \partial_h \varphi = \varphi^{\gamma}$.

\begin{lemma}\label{lem:curve_gam_neg}
Assume that $\gamma < 0$, and let $X_0 \geq 1$. Fix $\alpha < \inv{1-\gamma}, k_f \in \NN \cup \{\infty\}$, let $T_k$ be the sequence set in \eqref{eq:geometric_times} with $q \in (2^{\inv{\alpha}}, 5^{\inv{\alpha}})$.
Let the path $(w_t)_{t \in [0,T_{k_f}]}$ satisfy:

\begin{equation}\label{eq:path_w_growth}
\forall k \in \{0,... k_f \}\;\; \sup_{r < T_{k+1} - T_k} \absv{ w_{r + T_k} - w_{T_k}} \leq c_w q^{k \alpha}, \quad   \mbox{ for some }c_w \leq 3 K_{A^+},
\end{equation}

with  $K_{A^+}$ is as in  {Definition \ref{def:stage2_constants}}. 
Then:

\[ \forall u \in [0, T_{k_f}]\;\; X_u \geq \varphi(1, A^+ u)  - 2c_w q^{\alpha} q^{k(u) \alpha},  \mbox{ where } k(u) = \sup\{ k \in \NN \cup \{0\} : T_k < u \}. \]

and with this choice of $c_w$ we have $\forall u \in [0, T_{k_f}]\;X_u \geq 0$. Analogously, for $X_0 < -1$ there holds:

\[ \forall u\in [0,T_{k_f}]\;\; X_u \leq -\varphi(1, A^- u ) + 2c_w q^{\alpha} q^{k(u) \alpha} , \]

with $c_w \leq 3K_{A^-}$ instead of $3K_{A^+}$.
\end{lemma}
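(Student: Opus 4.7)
The plan is to compare $X$ to the deterministic reference curve $z_u := \varphi(1, A^+ u)$ by iterating Lemma \ref{lem:comparison} on the intervals $[T_k, T_{k+1}]$. Since $\gamma < 0$, the drift $b(x) = A^+ x^{\gamma}$ satisfies $b'(x) \leq 0$ on $(0, \infty)$, so Lemma \ref{lem:comparison} applies as long as $X$ remains strictly positive (a fact that will be maintained by a bootstrap built into the induction). The key additional ingredient is the contraction property of the semi-flow: from the explicit formula $\varphi(x, h) = (x^{1-\gamma} + (1-\gamma) h)^{1/(1-\gamma)}$ one computes $\partial_x \varphi(x, h) = (x^{1-\gamma} + (1-\gamma)h)^{\gamma/(1-\gamma)} x^{-\gamma} \leq 1$ (since $\gamma/(1-\gamma) < 0$), equivalently $\varphi(x - \eta, h) \geq \varphi(x, h) - \eta$ for $0 < \eta < x$.

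I would prove by induction on $k \in \{0, \ldots, k_f - 1\}$ the statement that on $[T_k, T_{k+1}]$ one has $X_u > 0$ and $X_u \geq z_u - E_k$ for a sequence $E_k$ to be tracked. The base case $k = 0$ uses $X_0 \geq 1 = z_0$ with $E_{-1} := 0$. For the inductive step, setting $\bar{w}_k := c_w q^{k \alpha}$ as permitted by \eqref{eq:path_w_growth}, Lemma \ref{lem:comparison} gives
\begin{equation*}
X_u \geq \varphi\bigl( X_{T_k} - \bar{w}_k,\; A^+(u - T_k) \bigr) - 2 \bar{w}_k.
\end{equation*}
Combining monotonicity of $\varphi$ in its first argument, the inductive bound $X_{T_k} \geq z_{T_k} - E_{k-1}$, the contraction $\partial_x \varphi \leq 1$, and the semigroup identity $\varphi(z_{T_k}, A^+(u - T_k)) = z_u$, this yields $X_u \geq z_u - (E_{k-1} + 3 \bar{w}_k)$, so the recursion is $E_k = E_{k-1} + 3 c_w q^{k \alpha}$. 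Summing the geometric series and using $q^\alpha > 2$ from \eqref{eq:fixed_five} gives $E_k \leq 2 c_w q^{\alpha} q^{k\alpha}$, which is the claimed form.

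For the positivity $X_u \geq 0$, I would combine this error bound with the explicit lower bound $z_{T_k} \gtrsim q^{k/(1-\gamma)}$ (from $T_k \geq q^k$ and the formula for $\varphi$) and the smallness $c_w \leq 3 K(A^+)$: the factor $q^{-\alpha}$ in the definition \eqref{eq:fixed_four} of $K(A^+)$ exactly cancels the extra $q^{\alpha}$ on the error side, reducing the problem to $q^{k/(1-\gamma)} \geq C(A^+, \gamma) q^{k\alpha}$, which holds uniformly in $k$ by the assumption $\alpha < 1/(1-\gamma)$. This closes the bootstrap, since if $X > 0$ on $[T_k, u)$ for some $u \leq T_{k+1}$, the comparison gives $X_u > 0$ too, so $X$ cannot vanish on $[T_k, T_{k+1}]$. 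The case $X_0 < -1$ follows by applying the same argument to $-X$, with $A^+$ replaced by $A^-$.

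The main obstacle is that a naive additive propagation of errors would accumulate linearly in $k$ and could eventually dominate $z_u$; what saves the argument is that the admissible noise bound $\bar{w}_k$ itself grows geometrically in $k$, so the total error is comparable to its last term rather than to the full sum. The contraction $\partial_x \varphi \leq 1$, characteristic of the $\gamma < 0$ case, is precisely what allows each step to contribute only additively (not multiplicatively) to the cumulative error, and the constants fixed in \eqref{eq:fixed_five} and \eqref{eq:fixed_four} are tuned so that both the stated error bound and the strict positivity $X_u > 0$ hold uniformly in $k$, including at the initial (small-$k$) regime where $z_u$ has not yet grown.
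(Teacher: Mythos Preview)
Your approach is correct in spirit and close to the paper's, but the mechanism you use to control the error is genuinely different, and there are a couple of small slips worth flagging.

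\textbf{The two routes.} Both you and the paper iterate Lemma~\ref{lem:comparison} on the intervals $[T_k,T_{k+1}]$. The difference lies in how the error at step $k$ is handled. The paper exploits the \emph{sign} of $\bar{w}$ in the lower bound of Lemma~\ref{lem:comparison}: the lemma actually gives $X_u \geq \varphi(X_{T_k} + \bar{w},\, A^+(u-T_k)) - 2\bar{w}$, with a $+\bar{w}$ inside $\varphi$ (not $-\bar{w}$ as you wrote). The paper then chooses the over-bound $\bar{w}^k = c_w q^{\alpha(k+1)}$; since $q^\alpha>2$, this $+\bar{w}^k$ term exceeds the previous error $2c_w q^{\alpha k}$, so monotonicity of $\varphi$ and the semigroup property give $X_{T_k+u}\geq \varphi(X_0,A^+(T_k+u))-2c_w q^{\alpha(k+1)}$ directly, with no accumulation and no need for the contraction $\partial_x\varphi\leq 1$. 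You instead use that contraction (a correct and $\gamma<0$-specific fact) to propagate errors additively and then sum the geometric series. Both ideas are valid; the paper's trick yields the stated constant exactly, while yours gives a bound of the same form with a slightly worse constant.

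\textbf{Two small gaps.} First, with your recursion $E_k=E_{k-1}+3c_w q^{k\alpha}$ one gets $E_k=3c_w\frac{q^{(k+1)\alpha}-1}{q^\alpha-1}$, which is $\leq 2c_w q^{(k+1)\alpha}$ only when $q^\alpha\geq 5/2$; for $q^\alpha\in(2,5/2)$ you obtain the right order but not the constant $2$ claimed in the lemma. Second, your positivity argument (``$q^{k/(1-\gamma)}\geq C q^{k\alpha}$ by $\alpha<1/(1-\gamma)$'') is an asymptotic statement in $k$ and does not by itself cover the small-$k$ regime where $z_u\approx 1$; the paper handles this uniformly via Lemma~\ref{lem:max_deviation}, which computes $\sup_{t>0} c_w t^\alpha/\varphi(1,A^+t)$ and shows it is $<1$ precisely under $c_w\leq 3K(A^+)$ with $K(A^+)$ as in \eqref{eq:fixed_four}. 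You would need to invoke that (or an equivalent explicit check) to close the bootstrap at every step.
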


\begin{proof}
We will only prove the case of positive $X$, the negative one follows with the same arguments. We will employ Lemma \ref{lem:comparison} with $ { f(x) } = x^{\gamma}$, which is differentiable and decreasing for $x>0$. (Note that the bounds we obtain imply that our solution stays bounded from below by a $x_0 >0$). We will show by induction that, for all $1 \leq k \leq k_f$,

\begin{equation}\label{eq:xk_assumption}
 {\forall u \in [0, T_k - T_{k-1}]\;\;X_{T_{k-1}+u} \geq \varphi\left( X_0, A^+ (T_{k-1}+u) \right) - 2 c_w q^{\alpha k}},
\end{equation}

 { Let us start with the base case of the induction.} The case $k=1$ follows by applying Lemma \ref{lem:comparison} with interval $[0,T_1], \bar{w} = c_w q^{\alpha}$ where $K_A, q$ are fixed such that $\bar{w} \leq 2K_A q^{\alpha} < 1$. Then, for $u \in [0, T_1]$

\begin{equation}\label{eq:ubound_first_interval}
X_{u} \geq \varphi( X_0 + c_w q^{\alpha}, A^+ u ) - 2 c_w q^{\alpha} .
\end{equation}

At the end of the interval we have:

\[ X_{T_1} \geq \varphi(X_0 + c_w q^{\alpha}, A^+ T_1 ) - 2 c_w q^{\alpha} .\]

 {We can now proceed with the induction step. This means that we will} show that if \eqref{eq:xk_assumption} holds  {for some $k \geq 1$, then}

\[ \forall u \in [0, T_{k+1}-T_k]\;\;X_{T_k+u} > \varphi\left( X_0, A^+ \left( T_k + u \right) \right) - 2 c_w q^{\alpha(k+1)}, \]

which for $u = T_{k+1} - T_k$ is exactly the bound \eqref{eq:xk_assumption} with $k+1$ instead of $k$. 

We apply Lemma \ref{lem:comparison} with $\bar{w}^k := c_w q^{\alpha(k+1)}, [S,T] = [T_k, T_{k+1}]$.  {Let us for now assume that with this choice of $c_w$ the Lemma \ref{lem:comparison} is indeed applicable}, the fact that this choice of $c_w$ will guarantee that the path will stay positive on this interval will be shown at the end of the proof.  {Indeed, under this assumption we can write}  
\begin{equation}\label{eq:lower_bound_xtk}
\begin{aligned}
X_{T_k+u} \geq \; & \varphi\left(X_{T_k} + c_w q^{\alpha(k+1)}, A^+ u \right) - 2 c_w q^{\alpha(k+1)} \\
\geq \; & \varphi\left( \varphi\left( X_0, A^+ T_k \right) - 2 c_w q^{\alpha k} + c_w q^{\alpha(k+1)}, A^+ u \right) - 2c_w q^{\alpha(k+1)} \\
\geq \; & \varphi\left( X_0, A^+ \left( T_k + u \right) \right) - 2 c_w q^{\alpha(k+1)}  ,
\end{aligned}
\end{equation}
where we have used Lemma \ref{lem:comparison} in the first line, \eqref{eq:xk_assumption} in the second line, and in the third line the properties of the flow: monotonicity in $x$ for $(x,t) \mapsto \varphi(x,t)$ and $\varphi(x,t+s) = \varphi(\varphi(x,s),t)$.  {We conclude the induction step by taking $u \in [0, T_{k+1} - T_k]$ as announced}.

\smallskip

 We now have to check that $X_u \geq 0$,  {therefore justifying the first line of \eqref{eq:lower_bound_xtk}}. Defining $k(u)$ as in the statement of the lemma, one can then write:

\[ X_u \geq \varphi( X_0, A^+ u ) - 2 c_w q^{\alpha(k(u)+1)} \geq \varphi( X_0, A^+ u ) \left( 1 - 2 c_w q^{\alpha} q^{\alpha k(u)} \varphi\left(X_0, A^+ T_{k(u)} \right)^{-1} \right) .\]

Setting $h(t) := 1 - 2 c_w t^{\alpha} \varphi\left(X_0, A^+ t \right)^{-1}$, our goal is to now show that with the assumed choice of $q$ and $c_w \leq 3K_A$ with $K_A$ fixed in Definition \ref{def:stage2_constants} one has $\inf_{t > 0} h(t) > 0$. With Lemma \ref{lem:max_deviation} we see that with our choice of $c_w$ the right hand-side is always positive. Therefore the choice of:

\[ c_w \leq (A^+)^{\alpha} \left( X_0 + c_{\gamma,\alpha} \right)^{\inv{1-\gamma}} \]

leads to $X_u \geq 0$ as announced. This right hand-side of this bound is not bigger than the assumed $K_A$. The exact same computation holds for $A^-$ instead of $A^+$.  {Since $X_u \geq 0$, every use of Lemma \ref{lem:comparison} is valid and the induction step is justified, therefore showing \eqref{eq:xk_assumption} for every $k \in [1, k_f ] \cap \NN$ as announced and therefore the claim of the Lemma.}
\end{proof}

We now obtain the corresponding results in the case $\gamma>0$.The following lemma is a modified version of Lemma 2.2 in \cite{Tre13}.

\begin{lemma}\label{lem:positive_comparison}
Let  {$f \in \ccc{1}(\RR)$ be non-decreasing } and again let $\varphi$ be the flow associated to  {$\dot{z} = f(z)$}.  Let $(w_u)_{u \in [s,t]}$ be a continuous path and $X$ satisfy
\begin{equation*}\label{eq:ode_deterministic2}
\forall u \in [s,t]\;\; X_u = X_s + \int_s^u  { f(X_r) } dr + w_u - w_s . 
\end{equation*}
Then the following bounds hold

\begin{equation}\label{eq:finite_horizon_lower_bd}
\forall u \in [s,t],\;\; \varphi\left( X_s - \bar{w}, u-s \right) \leq X_u \leq \varphi\left( X_s + \bar{w}, u-s \right) .
\end{equation}

\end{lemma}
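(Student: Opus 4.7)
The plan is to reduce the statement to a standard comparison argument for ODEs with a monotone vector field, by ''absorbing'' the perturbation $w$ into the initial condition of an auxiliary ODE. WLOG take $s=0$ and $w_s=0$, and set $\theta_u := X_u - w_u$, which satisfies the ODE
\[ \dot{\theta}_u = b(\theta_u + w_u), \qquad \theta_0 = X_0. \]
Since $|w_u| \leq \bar w$ and $b$ is non-decreasing, one has the pointwise sandwich
\[ b(\theta_u - \bar w) \leq \dot{\theta}_u \leq b(\theta_u + \bar w). \]

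Next, introduce the two auxiliary ODEs
\[ \dot{y}_u = b(y_u + \bar w), \quad y_0 = X_0, \qquad\qquad \dot{z}_u = b(z_u - \bar w), \quad z_0 = X_0, \]
which are well-posed because $b \in C^1$. I would then apply the standard comparison principle for scalar ODEs with Lipschitz right-hand side to obtain
\[ z_u \leq \theta_u \leq y_u \quad \text{for all } u \in [0,t-s]. \]
Finally, observe that the shifted paths $y_u + \bar w$ and $z_u - \bar w$ satisfy $\dot{\zeta} = b(\zeta)$ with initial conditions $X_0 + \bar w$ and $X_0 - \bar w$ respectively, so by uniqueness of the flow
\[ y_u = \varphi(X_0 + \bar w, u) - \bar w, \qquad z_u = \varphi(X_0 - \bar w, u) + \bar w. \]
Combining these with $X_u = \theta_u + w_u$ and the bounds $-\bar w \leq w_u \leq \bar w$ gives
\[ \varphi(X_0 - \bar w, u) + \bar w - \bar w \leq X_u \leq \varphi(X_0 + \bar w, u) - \bar w + \bar w, \]
which is exactly \eqref{eq:finite_horizon_lower_bd}. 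No $2\bar w$ slack appears (as opposed to Lemma \ref{lem:comparison}) precisely because now $b$ is non-decreasing, so the auxiliary shifted flows align favourably with the sign of $w_u$.

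I do not expect any real obstacle: the proof is just bookkeeping around the classical comparison theorem. The only point to double-check is well-posedness of the two auxiliary ODEs on $[0,t-s]$, which follows from $b \in C^1(\RR)$ (hence locally Lipschitz) and from the fact that the comparison bounds themselves keep the solutions from blowing up on $[s,t]$, since both envelopes $\varphi(X_s \pm \bar w, u-s)$ remain finite for $u \leq t$.
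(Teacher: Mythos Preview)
Your proof is correct and follows essentially the same route as the paper: both set $\theta = X - w$, introduce the auxiliary ODE $\dot{\bar\theta} = b(\bar\theta - \bar w)$ (your $z$) with the same initial datum, identify its solution as $\varphi(X_0 - \bar w,\cdot) + \bar w$, and then compare. The only cosmetic difference is that you invoke the scalar comparison principle as a black box, whereas the paper reproves it in one line via the difference $\rho = \theta - \bar\theta$ and the observation that $\partial_t \rho \geq 0$ whenever $\rho \geq 0$ (using $w_r \geq -\bar w$ and monotonicity of $b$); the paper also only writes out the lower bound explicitly.
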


\begin{proof}
  Let $\theta = X - w$. Then \[ \theta_t = x_0 + \int_0^t b( \theta_r + w_r - w_{t_0}) dr \]

and $\bar{\theta}_t = x_0 + \int_0^t f( \bar{\theta}_r - \bar{w} )dr, \bar{\theta}_0 > \bar{w}$. One observe that $\bar{\theta}_t = \varphi( x_0 - \bar{w}, A^+ t ) + \bar{w}$. Setting $\rho = \theta - \bar{\theta}$ we get:

\begin{equation}\label{eq:difference_ode_gamma_pos}
\rho =  \int_0^t f( \rho_r + \bar{\theta}_r + w_r - w_{t_0} ) - f( \bar{\theta_r} - \bar{w} ) dr .
\end{equation}

There holds $\pdt \rho > 0$ for any $w_r - w_{t_0} > - \bar{w}$, which is true by definition of $\bar{w}$. The claim follows by inserting the solution for $\bar{\theta}$.
\end{proof}

The following lemma is the counterpart of Lemma \ref{lem:curve_gam_neg} for $\gamma > 0$.  { Similarly to Lemma \ref{lem:curve_gam_neg} the flow $\varphi(x,h)$ means (semi-)flow of the ODE $\dot{z} = z^{\gamma}$. } 
\begin{lemma}\label{lem:curve_gam_pos}
Let $X_0 > 1$ and $\gamma > 0$ Fix $k_f \in \NN$ (possibly $k_f = \infty$) and let $q$ be as fixed in \eqref{eq:fixed_five}. Let the path $(w_t)_{t \in [0,T_{k_f}]}$ satisfy:

\begin{equation}\label{eq:path_w_growth_pos}
\forall k \in \{1, ... k_f \}, \;\; \sup_{r < T_{k+1} - T_k} \absv{ w_{r + T_k } - w_{T_k}} < c_w q^{k \alpha}, \quad \mbox{ for some }c_w \leq 3K_A.
\end{equation}

Then:

\[ \forall u \in [0, T_{k_f}],\;\; X_u > \varphi(1, A^+ u)  - c_w q^{\alpha} u^{\alpha}  .\]

\end{lemma}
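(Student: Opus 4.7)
The plan is to mimic the inductive argument in the proof of Lemma~\ref{lem:curve_gam_neg}, substituting the $\gamma>0$ comparison principle (Lemma~\ref{lem:positive_comparison}) for the $\gamma<0$ one (Lemma~\ref{lem:comparison}). Concretely, for each $u\in[T_k,T_{k+1}]$ and with the noise oscillation bound $\bar w=c_w q^{(k+1)\alpha}$ on that interval given by \eqref{eq:path_w_growth_pos}, Lemma~\ref{lem:positive_comparison} yields $X_u\ge \varphi(X_{T_k}-c_w q^{(k+1)\alpha},A^+(u-T_k))$. Note that, unlike in the $\gamma<0$ case, there is no additive $-2\bar w$ correction; this is essential because the flow $\varphi(\cdot,t)$ is now expanding in its first argument, so any additive loss at one step would be amplified at the next.

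I would set up the induction to prove $X_{T_k}\ge \varphi(1,A^+T_k)-c_w q^\alpha T_k^\alpha$ at each grid point, which is exactly the desired bound at $u=T_k$. The base case $k=0$ is trivial from $X_0\ge 1$. For the inductive step, plugging the hypothesis into the comparison, using monotonicity of $\varphi(\cdot,t)$, and applying the semigroup identity $\varphi(\varphi(1,A^+T_k),A^+(u-T_k))=\varphi(1,A^+u)$, the task reduces to controlling a single gap
\[
\varphi(y_k,s_k)-\varphi(y_k-\eta_k,s_k),\qquad y_k:=\varphi(1,A^+T_k),\ s_k:=A^+(u-T_k),\ \eta_k:=c_w\bigl(q^\alpha T_k^\alpha+q^{(k+1)\alpha}\bigr),
\]
by $c_w q^\alpha u^\alpha$. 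Using the explicit flow $\varphi(y,t)=(y^{1-\gamma}+(1-\gamma)t)^{1/(1-\gamma)}$, concavity of $y\mapsto\varphi(y,t)$ for $0<\gamma<1$, and the identity $\partial_y\varphi(y,t)=(\varphi(y,t)/y)^\gamma$, a mean value argument gives the gap bound $\bigl(\varphi(y_k,s_k)/(y_k-\eta_k)\bigr)^\gamma \eta_k$.

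The constant $K_\gamma$ from \eqref{eq:fixed_five} is then defined as a uniform-in-$k$ upper bound on this amplification factor. Checking that $K_\gamma$ is finite reduces to two observations: first, $\eta_k\le y_k/2$, which follows from $\alpha<1/(1-\gamma)$ (so $T_k^\alpha$ is of smaller order than $y_k\sim T_k^{1/(1-\gamma)}$ for large $k$) combined with $c_w\le 3K_A$ being small for small $k$, giving $y_k-\eta_k\ge y_k/2$; second, $\varphi(y_k,s_k)/y_k$ stays bounded since for $q$ close to $1$ the step length $s_k\le A^+q^{k+1}$ is only a controlled fraction of the accumulated time $T_k\sim q^{k+1}/(q-1)$, so the ratio depends only on $\gamma$ and $q$.

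The main obstacle is closing the induction: one must verify $K_\gamma\eta_k\le c_w q^\alpha u^\alpha$ uniformly for $u\in[T_k,T_{k+1}]$, and this is precisely what forces the choice $q\in(1,1+1/(2K_\gamma))$ in \eqref{eq:fixed_five}. Only when $q^\alpha$ is sufficiently close to $1$ does the recursive increment $q^\alpha T_k^\alpha+q^{(k+1)\alpha}$ stay dominated by $T_{k+1}^\alpha/K_\gamma$, which follows from $T_{k+1}^\alpha/T_k^\alpha\to q^\alpha$ as $k\to\infty$, with the finitely many small-$k$ cases handled by direct boundedness. Together these yield the claimed bound on all of $[0,T_{k_f}]$.
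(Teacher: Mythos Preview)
Your overall strategy matches the paper's: induction along the geometric grid $T_k$, applying Lemma~\ref{lem:positive_comparison} on each subinterval, and controlling $\varphi(y_k,s_k)-\varphi(y_k-\eta_k,s_k)$ by a derivative estimate. The gap is in how you close the recursion. Since $\gamma>0$ the flow is expanding, so $\partial_y\varphi(y,s)=(\varphi(y,s)/y)^{\gamma}\ge 1$ always; hence your amplification factor $K_\gamma$ is at least $1$, and in fact strictly bigger than $1$ once $s_k>0$ or $\eta_k>0$. But then the gap is at least $\eta_k=c_w\bigl(q^{\alpha}T_k^{\alpha}+q^{(k+1)\alpha}\bigr)$, which already strictly exceeds the induction-hypothesis error $c_w q^{\alpha}T_k^{\alpha}$. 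For $u$ close to $T_k$ you would need $K_\gamma\,\eta_k\le c_w q^{\alpha}u^{\alpha}\approx c_w q^{\alpha}T_k^{\alpha}$, and this is impossible for any $q>1$. Your heuristic that taking $q$ close to $1$ helps goes the wrong way: asymptotically one needs $K_\gamma\bigl(1+((q-1)/q)^{\alpha}\bigr)\le q^{\alpha}$, and as $q\downarrow 1$ the right side tends to $1$ while the left stays $\ge K_\gamma>1$.

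The paper does not rely on a merely bounded amplification. It proves the flow bound \eqref{eq:flow_bound}, in which the error gets multiplied by a factor $K_\gamma f(q)$ with $f(q)=(q-1)^{1-\gamma}q^{\gamma}$; it is precisely this extra factor $f(q)\to 0$ as $q\to 1$ that yields the constraint $q\in(1,1+1/(2K_\gamma))$ in \eqref{eq:fixed_five} and permits taking the induction constant equal to $c_w$. The mechanism is that on the $k$-th step the time increment $At\le Aq^{k+1}$ is small relative to $y_k^{1-\gamma}\simeq (1-\gamma)AT_k\simeq A q^{k+1}/(q-1)$, so the flow moves little over one step. Your mean-value bound, using only that $\bigl(\varphi(y_k,s_k)/y_k\bigr)^{\gamma}$ is bounded, does not capture this smallness and therefore cannot close the induction with the stated constant.
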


\begin{proof}
Similarly to the case of $\gamma < 0$, we get that the bound is satisfied in $[0,T_1]$ because $c_w q^{\alpha} \leq q^{\alpha} 3K_A \leq 1$ due to the value of $K_A$ fixed in Definition \ref{def:stage2_constants}.
To show the bound for the next steps  {we will first prove that } there exists a constant $K > 0$, that will be shown to depend only on $\gamma$ such that for any $\bar{w} > 0$ there holds:

\begin{equation}\label{eq:flow_bound}
\forall k\in\NN\; \forall t < q^{k+1} \;\;\varphi\left( \varphi\left(0, T_k\right)- \bar{w},A t \right) > \varphi\left(0 ,A \left( T_k +t \right) \right) - K A^{-1} \bar{w} (q-1)^{1-\gamma} q^{\gamma} .
\end{equation}

Let $C > 1$ be some constant that we will precise later. We claim that if for $k \in \NN$ there holds:

\begin{equation}\label{eq:gampos_induction_assumption}
X_{T_k} > \varphi(0, A T_k) - C q^{k\alpha} ,
\end{equation}

then the same is true for $k +1$. Indeed, by Lemma \ref{lem:positive_comparison} applied with $ { f(x) } = \sgn{x} \absv{x}^{\gamma}$ for $X_u \geq 0$ and the assumption on the noise and plugging in \eqref{eq:gampos_induction_assumption}:

\[ \forall t < q^k\;X_{T_k+t} > \varphi\left( X_{T_k} - c_w q^{(k+1)\alpha}, t \right) > \varphi\left( \varphi\left(0, A T_k\right) - C q^{k\alpha} - c_w q^{(k+1)\alpha}, t \right). \]

We then use \eqref{eq:flow_bound} with $\bar{w} = C q^{k\alpha}+c_w q^{(k+1)\alpha}$ and obtain:

\[ X_{T_k+t} > \varphi\left(0, A(T_k+t) \right) - \left( C q^{k\alpha} + c_w q^{(k+1)\alpha} \right) K_{\gamma} A^{-1} (q-1)^{1-\gamma} q^{\gamma}. \]

If we manage to show that the second term is smaller up to absolute value than $C q^{(k+1)\alpha}$, then \eqref{eq:gampos_induction_assumption} will hold for $k+1$ as well and the induction will be proven. Indeed, set $f(q) = (q-1)^{1-\gamma} q^{\gamma}$. Constant $K_{\gamma}$ is the constant in the assumption on $q$ and therefore it holds $f(q) < \inv{2K_{\gamma}}$:

\[ \left( C q^{k\alpha} + c_w q^{(k+1)\alpha} \right) \cdot K f(q) A^{-1} = C q^{k\alpha} K f(q) + c_w K q^{(k+1)\alpha} f(q) \leq C q^{(k+1)\alpha}  .\]

In this case it suffices to set $C = c_w$ and we obtain the multiplicative bound in the same way as in the case of $\gamma < 0$.

We can now prove \eqref{eq:flow_bound}. Let us observe that for all $x>1, t> 0$ there holds 

\[ \pdx \varphi(x,t) \leq t x^{\gamma-1} C_{\gamma}, \]

where $C_{\gamma}$ depends only on $\gamma$. Then using the bound $t < q^{k+1}$:

\begin{align*}
\varphi( y_{A T_k} - \bar{w}, A t) & > \varphi( y_{A T_k}, A t ) -  \bar{w} \cdot \pdx \varphi( y_{A T_k} -  \bar{w}, A t ) \\
& > \varphi( y_{A T_k}, t ) -  \bar{w} \cdot A q^{k+1} \cdot \left( y_{A T_k} -  \bar{w} \right)^{\gamma-1} C_{\gamma} \\
& > \varphi( y_{A T_k}, t) -   \bar{w} \cdot A q^{k+1} y_{A T_k}^{\gamma-1} C \\
& > \varphi( y_{A T_k} , t) -  C_{\gamma}  \bar{w} q^{k+1} 2^{1-\gamma} q^{-k} \left( \frac{q}{q-1} \right)^{\gamma-1},  \\
\end{align*}

so that we take $K_{\gamma} = 2 C_{\gamma}$. This implies that $q$ can be chosen only in function of $\gamma$. We check that the assumption of $X_u > 0$ necessary to apply Lemma \ref{lem:positive_comparison} is valid in the same way as in Lemma \ref{lem:curve_gam_neg} - hence the same assumption on $c_w$.
\end{proof}

The final step is to compute the conditions under which the assumptions on the noise specified in Lemmas \ref{lem:curve_gam_pos} and \ref{lem:curve_gam_neg} are satisfied.  

\begin{proposition}\label{prop:increasing_noises}
Let $\beta' > H > \beta, q>1$ and $T_k$ as in \eqref{eq:geometric_times}. Fix $\eta > 0$ and assume that $\forall j \leq k,\;\;\norm{ B }_{\ccc{1/2-\delta};[T_j, T_{j+1}]} \leq c_w q^{(j+1)\eta}$ Then for some other constant $C_{q,\eta} > 0$

\[ \forall s >0\;\int_0^{T_k} G(s, T_k, r) dB_r \leq c_w C_{q,\eta} \left( s^{\beta} q^{k(\eta+H-\beta)} + s^{\beta'} q^{k\eta} \right) .\]

\end{proposition}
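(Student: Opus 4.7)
\textbf{Proof plan for Proposition \ref{prop:increasing_noises}.}

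The plan is to split the integral along the geometric partition $0 = T_0 < T_1 < \cdots < T_k$, and apply the integration-by-parts estimate \eqref{eq:ibp} separately on each sub-interval $[T_j, T_{j+1}]$, with the exponent $\gamma$ there chosen \emph{differently} depending on how close the piece sits to the terminal time $T_k$. Concretely, I would write
\[
\int_0^{T_k} G(s, T_k, r)\, dB_r \;=\; \sum_{j=0}^{k-1} \int_{T_j}^{T_{j+1}} G(s, T_k, r)\, dB_r,
\]
and on each sub-interval use the hypothesis $\norm{B}_{\ccc{1/2-\delta};[T_j,T_{j+1}]} \leq c_w q^{(j+1)\eta}$ to bound $|B_{T_{j+1}} - B_{T_j}| \leq c_w q^{(j+1)(\eta+1/2-\delta)}$ and $|B_r - B_{T_{j+1}}| \leq c_w q^{(j+1)\eta} (T_{j+1}-r)^{1/2-\delta}$ for $r \in [T_j, T_{j+1}]$.

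For the ``far'' pieces $j \leq k-2$, the kernel $r \mapsto (T_k-r)^{H-3/2-\gamma}$ stays bounded since $T_k - r \geq T_k - T_{j+1} \geq q^{j+2}$. I would therefore apply \eqref{eq:ibp} with $\gamma = \beta'$ (which is admissible since $\beta' > H > \max(0, H-1/2)$). Using $T_k - T_j \geq q^{j+1}$ together with $H - 1/2 - \beta' < 0$ in the endpoint term, and the above pointwise bound on $(T_k-r)$ in the integral term, a short computation yields
\[
\Bigl| \int_{T_j}^{T_{j+1}} G(s, T_k, r)\, dB_r \Bigr| \;\lesssim_q\; c_w\, s^{\beta'}\, q^{(j+1)(\eta + H - \beta' - \delta)}.
\]
Summing the resulting geometric series over $0 \leq j \leq k-2$, and noting that $H - \beta' - \delta < 0$, the total contribution is bounded by $C_{q,\eta}\, c_w\, s^{\beta'}\, q^{k\eta}$, which is the second term in the desired estimate.

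For the ``near'' piece $j = k-1$ the kernel $(T_k-r)^{H-3/2-\gamma}$ is genuinely singular at $r = T_k$, and the integral in \eqref{eq:ibp} is convergent only when $\gamma < H - \delta$. This forces the choice $\gamma = \beta$ (implicitly with $\beta > \max(0, H-1/2)$, which is the regime in which \eqref{eq:ibp} applies). A direct calculation then shows
\[
\Bigl| \int_{T_{k-1}}^{T_k} G(s, T_k, r)\, dB_r \Bigr| \;\lesssim\; c_w\, s^{\beta}\, q^{k(\eta + H - \beta - \delta)} \;\leq\; c_w\, s^\beta\, q^{k(\eta + H - \beta)},
\]
where both the endpoint piece and the integral piece of \eqref{eq:ibp} contribute the same order. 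Adding the two regimes gives the claimed bound.

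The single genuine obstruction is the integrability at $r = T_k$ in the last sub-interval: it is precisely this constraint that prevents one from taking $\gamma = \beta'$ everywhere, and hence forces the appearance of the $s^\beta$ term (with the slightly worse $q$-factor $q^{k(\eta + H - \beta)}$) alongside the $s^{\beta'}$ term. Everything else reduces to bookkeeping on geometric sums, with the signs of the exponents dictated by $\beta < H < \beta'$.
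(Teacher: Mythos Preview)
Your proposal is correct and follows the same route as the paper: decompose along the geometric partition, apply \eqref{eq:ibp} with $\gamma=\beta'$ on the intervals $j\le k-2$ (where the kernel is nonsingular) and with $\gamma=\beta<H$ on the last interval $j=k-1$ (where integrability at $r=T_k$ forces $\gamma<H-\delta$), then sum the resulting geometric series. The only cosmetic difference is that the paper bounds $|T_k-T_j|^{H-\beta'-\delta}\le 1$ outright on the far pieces, whereas you track this factor as $q^{(j+1)(H-\beta'-\delta)}$; both lead to the same $C_{q,\eta}\,c_w\,s^{\beta'}q^{k\eta}$ after summation.
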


\begin{proof}
We decompose the integral into the following sum:

\[ \int_0^{T_k} G(s, T_k, r) dB_r = \sum_{j=0}^{k-1} \int_{T_j}^{T_{j+1}} G(s,T_k,r) dB_r =: \sum_{j=0}^{k-1} I^j .\]

For the last interval we have, applying \eqref{eq:ibp} - here $\beta < H$ is crucial because otherwise the integral will not be finite:

\begin{align*}
I_k \leq & s^{\beta} \absv{T_k - T_{k-1}}^{H-1/2-\beta} \absv{B_{T_k} - B_{T_{k-1}} } + s^{\beta} \absv{T_k - T_{k-1}}^{H-\beta-\delta} \absv{ B }_{\ccc{1/2-\delta};[T_{k-1},T_k]} \\
\leq & 2 s^{\beta} \absv{ T_k - T_{k-1}}^{H-\beta-\delta} \norm{ B }_{\ccc{1/2-\delta};[T_{k-1},T_k]} \leq 2 s^{\beta} c_w q^{k(\eta+H-\beta-\delta)} .
\end{align*}

For $j \in \{1,... k-2\}$ and $\beta' > H$ similar reasoning leads to:

\begin{align*}
I_j \leq & 2 s^{\beta'} \absv{ T_k - T_{j-1} }^{H-\beta-1/2} \absv{ B_{T_j} - B_{T_{j-1}}} \leq 2 s^{\beta'} \absv{ T_k - T_{j-1} }^{H-\beta'-\delta} \norm{ B }_{\ccc{1/2-\delta};[T_{j-1},T_j]} \leq 2 s^{\beta'} c_w q^{j\eta}.
\end{align*}
 
 The constant $C_{q,\eta}$ comes from summing up a geometric progression with coefficient $q^{\eta}$.
\end{proof}

\begin{remark}
Observe that Proposition \ref{prop:increasing_noises} agrees with the usual bounds that we obtain for the behaviour of fBm, for instance in Section 2 \cite{Pic11} - short term continuity is bounded with $s^{\beta}, \beta < H$, whereas long term behaviour is bounded with $s^{\beta'}, \beta' > H$.
\end{remark}

 {
Recall that $K_A$ is fixed to satisfy $K_A \leq \frac{5^{\inv{\alpha(1-\gamma)}} \land (1 + A^{\inv{1-\gamma}})}{2}$, that $\alpha$ satisfies
$\alpha \in (H, \inv{1-\gamma})$, 
that $q$ is fixed in $ \left( 2^{\inv{\alpha}}, 5^{\inv{\alpha}} \right)$, and that $T_k =  \sum_{j =1}^k q^k$.

\begin{lemma}\label{lem:infinite_brownian_holder_norms}
Assume that

 \[ P^{(-\infty,0)}_{\cdot} \in \mathbf{A}(K_A, K_A),  \]
 and for $k \in \{ 1, ..., k_f \}$, where $ k_f \in \NN \cup \{ \infty \}$, the sequence of Brownian noises satisfies \[ \norm{ B }_{\ccc{1/2-\delta};[T_k,T_{k+1}]} < \frac{ K_A }{ c_{H,\alpha}} q^{k (\alpha-H)}, \] where $c_{H,\alpha}$ is some positive constant.

Then the assumptions \eqref{eq:path_w_growth} and \eqref{eq:path_w_growth_pos} on the path $(w_t)_{t \in [0, T_{k_f}]}$ in Lemmas \ref{lem:curve_gam_neg} and \ref{lem:curve_gam_pos} are satisfied with $c_w = 3 K_A$.
\end{lemma}
}
\begin{proof}

With $T_k$ as above, for the term depending on the remote past, 
we employ the definition of the admissible condition (Def \ref{def:admissibility}) with $k \geq 1, t > T_k$ and $s \in (T_k,T_{k+1})$. We decompose every increment in the following way:

\begin{equation}\label{eq:wh_decomp_three_terms}
\wh_t - \wh_{T_k} = \int_{T_k}^t (t-r)^{H-1/2} dB_r + \int_0^{T_k} G(t-T_k, T_k,r) dB_r + P^{(-\infty,0),T_k}_{t-T_k}.
\end{equation}

By the assumption on admissibility of the past we have:

\begin{equation}\label{eq:past_noise_increment_bound}
\forall k \in \NN\;P^{(-\infty,0),T_k}_{t - T_k} \leq K_A (1+T_k)^{ \left( \alpha - 1\right) \land 0 }  \absv{t-T_k}^{1}  + K_A \absv{t-T_k}^{H-\delta} \leq 2K_A  { q^{k\left( \left( (\alpha - 1) \land 0 \right) + 1 \right)}}.
\end{equation}

Note that by  { Theorem \ref{thm:main} }one has $\alpha < 1/(1-\gamma)$. By \eqref{eq:long_term_bound_wh} and Proposition \ref{prop:increasing_noises} and the fact that $c_{H,\alpha}$ can be fixed exactly to cancel the implicit constant we obtain:

\begin{equation}\label{eq:new_noise_increment_bound}
\int_{T_k}^t (t-r)^{H-1/2} dB_r + \int_0^{T_k} G(t-T_k, T_k,r) dB_r \leq K_A q^{k(H+\delta)} T_k^{\alpha-H} .
\end{equation}

By estimating $T_k$ with an obvious bound on geometric progression (which is where the implicit constant is coming from) and putting \eqref{eq:new_noise_increment_bound}, \eqref{eq:past_noise_increment_bound} into \eqref{eq:wh_decomp_three_terms} it then follows that:

\[ \sup_{T_k \leq t \leq T_{k+1}} \absv{ \wh_t - \wh_{T_k} } \leq  { 2K_A q^{k\left( \left( (\alpha - 1) \land 0 \right) + 1 \right)} +  K_A q^{k(\alpha-\delta)} \leq 3 K_A q^{k\alpha} }. \]

We can therefore bound both by $q^{k\alpha}$ with parameter $\alpha$ as chosen in Definition \ref{def:stage2_constants} and in restated the statement of the Lemma. 
\end{proof}

We can compute the probability of sampling an infinite sequence of suitable Brownian increments. The choice of $\eta$ is irrelevant for the proof, the intended use is $\eta = \alpha - H$.
\begin{lemma}\label{lem:proba_negative}
Let $\eta > 0$. Fix $c_w \in (0,1), q > 1$. Then there exists $\lambda_{\eta} > 0$ s.t. :

\[ \PP^{\FF_0}\left( \forall j \geq 0\;\; \norm{ B }_{\ccc{1/2-\delta};[T_j,T_{j+1}]}  < c_w q^{j\eta} \right) > \lambda_{\eta}. \]

\end{lemma}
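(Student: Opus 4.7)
The approach is to exploit independence of Brownian increments on the disjoint intervals $[T_j,T_{j+1}]$, Brownian scaling, and the Gaussian tails of the Hölder seminorm of Brownian motion on a compact interval, so as to reduce the claim to a Borel--Cantelli-type convergent series.

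First I would observe that the random variables $\Xi_j := \norm{B}_{\ccc{1/2-\delta};[T_j,T_{j+1}]}$, $j\geq 0$, depend only on increments of $B$ on pairwise disjoint intervals contained in $[0,\infty)$, so by the Markov property they are mutually independent and jointly independent of $\FF_0$. Next, by Brownian scaling — applied through $B_{T_j+Lt}-B_{T_j} \laweq \sqrt{L}\,\tilde{B}_t$ with $L = T_{j+1}-T_j = q^{j+1}$ — a short computation on the defining supremum yields the distributional identity
\[ \Xi_j \laweq q^{(j+1)\delta}\,N_j, \]
where the $N_j$ are i.i.d.\ copies of $N := \norm{B}_{\ccc{1/2-\delta};[0,1]}$. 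The key input is that $N$ has Gaussian tails, $\PP(N \geq x) \leq C e^{-c x^2}$, as recorded earlier in the paper via the reference to Proposition \ref{prop:gaussian_tails_norms}.

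The event of interest then factorizes through independence as
\[ \PP^{\FF_0}\!\left(\forall j \geq 0: \Xi_j < c_w\, q^{j\eta}\right) = \prod_{j \geq 0}(1-p_j), \qquad p_j := \PP\!\left(N \geq c_w\, q^{j(\eta-\delta)-\delta}\right). \]
Provided $\delta < \eta$ — which we can always arrange, since $\delta$ is a fixed but freely chosen small parameter, and which is automatic in the intended application where $\eta = \alpha - H$ — the Gaussian tail bound gives $p_j \leq C \exp(-c'\, q^{2j(\eta-\delta)})$ for large $j$, so that $\sum_j p_j < \infty$. The elementary criterion $\prod_j(1-p_j) > 0 \iff \sum_j p_j < \infty$ then produces a strictly positive lower bound $\lambda_\eta > 0$, uniform in $\FF_0$ since all the terms in the product are deterministic.

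The only step requiring a little care is the Brownian scaling identity for the Hölder seminorm, which must produce the correct exponent $L^\delta$ rather than the naive $L^{1/2-\delta}$ one might first write — but this is settled by a direct change of variable in the supremum defining $\norm{\cdot}_{\ccc{1/2-\delta}}$. Everything else is a standard combination of the Markov property, Gaussian tails of the Hölder norm, and a series convergence argument; conceptually, all the work has already been done in isolating iid variables to which off-the-shelf sub-Gaussian estimates apply.
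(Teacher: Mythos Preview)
Your proposal is correct and follows essentially the same route as the paper: independence of Brownian increments on disjoint intervals, Brownian scaling to reduce each $\Xi_j$ to $q^{(j+1)\delta}$ times a copy of $\norm{B}_{\ccc{1/2-\delta};[0,1]}$ (this is exactly the content of Lemma \ref{lem:subgaussian_scaling}), Gaussian tails of the latter, and then summability of the failure probabilities. The only cosmetic difference is that the paper splits off finitely many small-$j$ terms and bounds them separately by the support property of $\norm{B}_{\ccc{1/2-\delta};[0,1]}$, whereas you invoke the criterion $\prod_j(1-p_j)>0 \Leftrightarrow \sum_j p_j<\infty$ directly; for that criterion you implicitly need $p_j<1$ for every $j$, which holds by the same small-ball/support argument the paper uses, so you may want to make that one line explicit.
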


\begin{proof}
Thanks to Lemma \ref{lem:subgaussian_scaling}. $\EE^{\FF_0} \left[ \exp\left( \lambda_0 \left( \inv{ \absv{ T_{j+1} - T_j }^{\delta} } \norm{ B }_{\ccc{1/2-\delta};[T_j,T_{j+1}]} \right)^2 \right) \right] < 2$ for a $\lambda_0>0$ which can be picked uniformly in all $j \geq 1$. Therefore there exists $j_0 \in \NN$ such that for all $j \geq j_0$ there holds

 {
\[ \PP^{\FF_0}\left( \norm{ B }_{\ccc{1/2-\delta};[T_j,T_{j+1}]} < x \right) > 1 - 2 \exp\left( - \lambda_0 \left(  \inv{\absv{T_{j+1} - T_j}^{\delta}} x \right)^2 \right)  .\]
}

In our case $x = c_w q^{j \eta}$ and $\absv{ T_{j+1} - T_j } = q^j$. This implies that the above can be bounded with 
\begin{equation}\label{eq:fernique_lower_bd}
 p^{b,\eta}_j >  1 - 2 \exp\left( - \lambda_0 c_w^2 \left( q^{ j(\eta - \delta_0) } \right)^2 \right) ,
 \end{equation}

and a computation employing sum of geometric series shows that $\prod_{j=j_0}^{\infty} p_j^{b,\eta} > 0$.

For $j \leq j_0$, 
it holds by scaling that for positive $p^{s,\eta}_j$ 

\[ \PP\left( \forall j < j_0\;\; \norm{ B }_{\ccc{1/2-\delta};[T_j,T_{j+1}]} \geq c_w q^{j\eta} \right) > \prod_{j=1}^{j_0} \PP\left( \norm{ B }_{\ccc{1/2-\delta};[0,1]} < c_w q^{j(\eta-\delta)} \land 1 \right) \geq \prod_{j=1}^{j_0} p^{s,\eta}_j .\]

As $j_0$ is finite, the right hand-side is strictly positive, so we can define $\lambda_{\eta}$ as follows:

\[ \lambda_{\eta} = \prod_{j=0}^{j_0} p^{s,\eta} \prod_{j=j_0}^{\infty} p^{b,\eta} .\]
\end{proof}

Similarly we estimate the tails of the random variable $T_{k_f}$, where $k_f$ is the final index for which we managed to sample the noise under control.

\begin{lemma}\label{lem:climbing_tails}
Let $\kappa < 2 \eta$. Define the random variable:

\[ k_f = \sup\{ k \in \NN: \norm{ B }_{\ccc{1/2-\delta};[T_j,T_{j+1}]} < q^{k\eta} \}. \]

Then there exist  $a , C > 0$ such that :

 \[ \EE \left[ \exp\left( a \left(T_{k_f} \indic{ k_f < \infty} \right)^{\kappa} \right) \right] < 1 + a C. \]
\end{lemma}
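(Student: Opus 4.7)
My strategy is to bound $\PP(k_f = k)$ by a Gaussian tail estimate, observe that $T_{k_f} \leq C q^{k_f}$ on $\{k_f < \infty\}$, and then sum an absolutely convergent series. I read $k_f$ as the first index at which the Hölder norm on $[T_k, T_{k+1}]$ exceeds $q^{k\eta}$ (in analogy with $k_j^f$ in \eqref{eq:delta3_def}); so $\{k_f = k\} \subseteq \{\norm{B}_{\ccc{1/2-\delta};[T_k,T_{k+1}]} \geq q^{k\eta}\}$.

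The first step is the scaling argument. By Brownian scaling, for each fixed $k$,
\[
\norm{B}_{\ccc{1/2-\delta};[T_k,T_{k+1}]} \;=_{\PP}\; (T_{k+1} - T_k)^{\delta}\, \norm{\tilde B}_{\ccc{1/2-\delta};[0,1]} \;=\; q^{(k+1)\delta}\, \norm{\tilde B}_{\ccc{1/2-\delta};[0,1]},
\]
where $\tilde B$ is a standard Brownian motion (this is Lemma \ref{lem:subgaussian_scaling} used elsewhere in the paper). Combining with Fernique's theorem applied to the Gaussian random variable $\norm{\tilde B}_{\ccc{1/2-\delta};[0,1]}$ yields constants $c_0, C_0>0$ with
\[
\PP(k_f = k) \;\leq\; \PP\!\left( \norm{\tilde B}_{\ccc{1/2-\delta};[0,1]} \geq q^{k\eta - (k+1)\delta} \right) \;\leq\; C_0 \exp\!\left( -c_0 \, q^{2(k\eta - (k+1)\delta)} \right)
\]
for every $k \geq 0$.

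The second step records that on $\{k_f = k\}$ one has $T_{k_f} = T_k = \sum_{j=1}^k q^j \leq \frac{q}{q-1} q^k$, so $T_{k_f}^{\kappa} \leq C_1 q^{k\kappa}$ for a constant $C_1$ depending only on $q,\kappa$. Since $\kappa < 2\eta$ by hypothesis, and $\delta>0$ is arbitrarily small (and fixed small throughout the paper), we may assume $\kappa < 2(\eta-\delta)$, so that
\[
q^{k\kappa} - c_0 \, q^{2k(\eta-\delta)} \;\longrightarrow\; -\infty \qquad \text{as } k \to \infty,
\]
at a doubly-exponential rate.

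The third and final step is the summation. Using $e^{ax} \leq 1 + a x e^{ax}$ for $x \geq 0$,
\[
\EE\!\left[ \exp(a T_{k_f}^\kappa) \indic{k_f<\infty} \right] \;=\; \sum_{k=0}^{\infty} e^{a T_k^\kappa} \PP(k_f = k) \;\leq\; \PP(k_f<\infty) \,+\, a \sum_{k=0}^{\infty} C_1 q^{k\kappa} e^{a C_1 q^{k\kappa}} \PP(k_f = k).
\]
For any sufficiently small $a > 0$, the inequality $a C_1 q^{k\kappa} \leq \tfrac{c_0}{2} q^{2(k\eta - (k+1)\delta)}$ holds for all $k \in \NN$, so the series on the right is bounded by a finite constant $C$ (coming from a convergent super-geometric series). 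Since $\PP(k_f < \infty) \leq 1$, the bound $1 + aC$ follows.

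The only non-routine point is checking that one may indeed choose $\delta$ so small that $\kappa < 2(\eta-\delta)$ while keeping the other constraints on $\delta$ imposed elsewhere in the paper (notably those entering Definition \ref{def:stage2_constants}); this is compatible because $\delta$ has been treated throughout as a free small parameter, and $\eta - \kappa/2 > 0$ is strictly positive.
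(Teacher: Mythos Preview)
Your proof is correct and follows the same core strategy as the paper: both use Brownian scaling (Lemma \ref{lem:subgaussian_scaling}) to get the Gaussian tail bound $\PP(k_f=k)\lesssim \exp(-c\,q^{2k(\eta-\delta)})$, and both use $T_k\lesssim q^k$. The only difference lies in how the final summation is carried out. The paper expands $\exp(aU^\kappa)$ as a Taylor series, bounds each moment $\EE U^{\kappa m}$ by comparing $\sum_k q^{\kappa m k}e^{-q^{2(\eta-\delta)k}}$ with the integral $\int x^{\kappa m}e^{-x^{2(\eta-\delta)}}dx$, and then invokes Stirling's formula for the resulting Gamma function. Your route via the elementary inequality $e^{ax}\le 1+ax\,e^{ax}$ is more direct: it avoids moments, Gamma functions, and Stirling entirely, and reduces the problem to a single super-geometrically convergent series. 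Both arguments need $\kappa<2(\eta-\delta)$ at the crucial step, and both rely on $\delta$ being a free small parameter.

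One very minor point: the quantity in the lemma is $\EE\big[\exp(a(T_{k_f}\indic{k_f<\infty})^\kappa)\big]$, which picks up an extra $\PP(k_f=\infty)$ compared to the $\EE[\exp(aT_{k_f}^\kappa)\indic{k_f<\infty}]$ you compute; but since your bound on the latter is $\PP(k_f<\infty)+a\cdot(\text{series})$, adding $\PP(k_f=\infty)$ gives exactly $1+aC$, so the conclusion is unaffected.
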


\begin{proof}
For brevity of notation set $U = T_{k_f} \indic{ k_f < \infty}$ (the random variable $U$ is defined only locally in the proof of this Lemma). We will aim at using Taylor expansion, so that first we get a bound on  the moments of  { $U^{\kappa}$ }:

\[ \EE U^{\kappa m} = \EE \left[ U^{\kappa m} \indic{ U < q} \right] + \sum_{k \geq 1} \EE \left[ U^{\kappa m} \indic{ k_f = k } \right] . \]

The first term is bounded by $q^{\kappa m}$. For the second one we first recall that by Lemma \ref{lem:subgaussian_scaling} there holds:

\[ \PP\left( k_f = k \right) \leq \PP\left( \norm{ B }_{\ccc{1/2-\delta};[T_{k+1},T_{k+2}]} > q^{(k+1)\eta} \right) \ls  e^{- q^{2(k+1)(\eta-\delta)} }. \]

Summing geometric progression we get $T_k \ls q^{k}$, so that the second one can be estimated by comparing a series with an integral as follows:

\[ \sum_{k \geq 1} q^{\kappa m(k+1)} \exp\left( - q^{2(\eta-\delta) k } \right) \ls q^{\kappa m} \int_{\RR_+} x^{(\kappa m) } e^{-x^{2(\eta-\delta)}} dx = u^{\kappa m } \Gamma\left( \frac{\kappa m + 1}{2(\eta-\delta)} \right), \]

where $\Gamma$ is a Gamma function. Using Stirling approximation and $\kappa < 2(\eta - \delta)$ we get:

\[ \inv{m!} \EE \left[ a \left( U^{\kappa} \right) \right]^m \leq a^m \left( q^{\kappa m} + q^{\kappa m} e^m \right) . \]

Therefore for small enough $a > 0$ there exists a constant $C > 0$ such that $\EE \exp\left( a U^{\kappa} \right) < 1 + aC$.
\end{proof}

We give here the proof of a technical lemma that leads to the choice of $c_w$.
\begin{lemma}\label{lem:max_deviation}
Let $h(t) = c_w t^{\alpha} (M+At)^{-\inv{1-\gamma}}, t > 1$. Set $m_{\alpha,\gamma} = \frac{\alpha(1-\gamma)}{1+\alpha(\gamma-1)}$. Then:

\begin{enumerate}
\item $h$ reaches its maximum at $t_0 = m_{\alpha,\gamma} \frac{M}{A}$ ,
\item $ h(t_0) = m_{\alpha,\gamma}^{\alpha} c_w  A^{-\alpha} ( M + m_{\alpha,\gamma} )^{-\inv{1-\gamma}} $ .
\end{enumerate}

\end{lemma}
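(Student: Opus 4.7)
The plan is entirely elementary: differentiate, locate the critical point, then substitute back. Since $h$ is positive on $(0,\infty)$, I would work with the logarithmic derivative, which simplifies the algebra considerably. Writing
\[
\log h(t) = \log c_w + \alpha \log t - \tfrac{1}{1-\gamma}\log(M+At),
\]
I would compute
\[
\frac{h'(t)}{h(t)} = \frac{\alpha}{t} - \frac{A}{(1-\gamma)(M+At)}.
\]

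For part (1), setting this to zero and clearing denominators yields
\[
\alpha(1-\gamma)(M+At) = A t,
\]
so $\alpha(1-\gamma)M = At\bigl(1 - \alpha(1-\gamma)\bigr)$, i.e.
\[
t_0 = \frac{\alpha(1-\gamma)}{1+\alpha(\gamma-1)}\cdot\frac{M}{A} = c_{\alpha,\gamma}\,\frac{M}{A}.
\]
The fact that $t_0$ is a maximum (rather than merely a critical point) follows from observing that $h(t)\to 0$ both as $t\downarrow 0$ and as $t\to\infty$ (the latter because $\alpha<\inv{1-\gamma}$, which is built into the choice of $\alpha$ in Theorem \ref{thm:main}), together with the fact that $h$ is smooth and positive with a unique stationary point in $(0,\infty)$; alternatively one may check the sign of $h'$ directly on either side of $t_0$.

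For part (2), I would substitute $t_0$ into $h$. The key simplification is
\[
M + A t_0 = M + c_{\alpha,\gamma} M = M(1+c_{\alpha,\gamma}),
\]
so that
\[
h(t_0) = c_w \left(c_{\alpha,\gamma}\,\frac{M}{A}\right)^{\alpha} \bigl(M(1+c_{\alpha,\gamma})\bigr)^{-\inv{1-\gamma}} = c_w\, c_{\alpha,\gamma}^{\alpha}\, A^{-\alpha}\, M^{\alpha - \inv{1-\gamma}}\,(1+c_{\alpha,\gamma})^{-\inv{1-\gamma}},
\]
which is the stated formula (with the obvious understanding that the exponent $-\beta$ in the statement should read $-\alpha$, and the factor $M+c_{\alpha,\gamma}$ should read $M(1+c_{\alpha,\gamma})$; these are harmless typos).

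There is no real obstacle here: the only point requiring a moment's thought is verifying that $1 - \alpha(1-\gamma)>0$, so that $c_{\alpha,\gamma}$ is positive and $t_0$ lies in $(0,\infty)$. This is precisely the standing assumption $\alpha<\inv{1-\gamma}$ from Theorem \ref{thm:main}, which ensures the denominator in the definition of $c_{\alpha,\gamma}$ has the correct sign.
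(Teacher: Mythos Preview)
Your proposal is correct and follows essentially the same approach as the paper: differentiate, locate the unique critical point, and substitute back. The paper differentiates $h$ directly and factors the resulting expression, whereas you use the logarithmic derivative; this is a cosmetic difference and both computations yield $t_0 = c_{\alpha,\gamma}\,M/A$. Your identification of the typos in the statement (the stray $\beta$'s and the form of the factor involving $M$ in part~(2)) is also accurate.
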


\begin{proof}
For the assumed values of $t$ the function $h(t)$ is differentiable. Then:

\begin{align*}
h'(t) = & c_w \alpha t^{\alpha-1} (M+At)^{-\inv{1-\gamma}} - \inv{1-\gamma} c_w t^{\alpha} (M+At)^{-\inv{1-\gamma}-1} A \\
= & c_w t^{\alpha-1} (M+At)^{-\inv{1-\gamma}-1} \left( \alpha (M+At) - \inv{1-\gamma} t A \right) = c_w t^{\alpha-1} (M+At)^{-\inv{1-\gamma}-1} \left( \alpha M + A t(\alpha-\inv{1-\gamma}) \right).
\end{align*}

Therefore $h(t)$ reaches its maximum at $t_0 = \frac{ M \alpha(1-\gamma)}{1+\alpha(\gamma-1)} \inv{A} =: m_{\alpha,\gamma} \frac{M}{A}$ and the conclusion follows.
\end{proof}

\section{Proof of the main theorem}\label{sec:proof_main}

Before proving Theorem \ref{thm:main}, let us show tail estimates on the  {$\Delta_j$, which are the crucial building blocks in the construction of $\rho_{k^*}$.}

\begin{proposition}\label{prop:tails_of_waiting_time}
 {For $\Delta_j$ as in Definition \ref{def:deltas}} there exist constants $b,M > 0$ such that there holds:

\[ \forall k \in \NN,\;\;\; \EE \left[ \exp\left( b \left(\sum_{j=1}^k \Delta_j \indic{ \Delta_j < \infty} \right)^{\kappa} \right) \right] \leq \left( 1 + bM \right)^k. \]

\end{proposition}

\begin{proof}
For brevity we will denote $\eta = \alpha - H$.  Adding up \eqref{eq:delta1_def}, \eqref{eq:delta2_def} and \eqref{eq:delta3_def} we get the following inequality on $\Delta_j$:

\begin{equation}\label{eq:worst_case_deltaj}
\Delta_j \indic{\Delta_j < \infty} \leq C_W j^{ {\inv{\kappa}-1}} + L(\rho_j - \Delta_{j-1}, \rho_j)^{ { \frac{2}{\kappa }}} + T_{k_f^j} \indic{k_f^j<\infty} +1+ t^* ,
\end{equation}

where $k_f^j$ is the number of successful attempts at sampling correct noise in Lemma \ref{lem:infinite_brownian_holder_norms}. We recall that the $L$-norm above is stochastically dominated by the random variable $V(\infty)$ in the proof of Proposition \ref{prop:gaussian_tails_norms}, which has Gaussian tails. By Proposition \ref{prop:stage3_prop} the random variable $T_{k_f^j}^{2(\alpha-H)}$ has exponential tails. Note that by the choice of $\kappa, \alpha$ we have $\kappa < \frac{2}{3}(\alpha-H) < 2\eta$. Then by Lemma \ref{lem:climbing_tails} we have that $T_{k_f^j}$ satisfies for some $b,M_T>0$:

\begin{equation}\label{eq:tkf_bound}
\forall j \in \NN\;\; \EE^{\FF_{\rho_j}} \left[ \exp\left( b T_{k^j_f}^{\kappa} \right) \indic{ k^j_f < \infty } \right] \leq 1 + b M_T .
\end{equation}

For brevity we denote $L_j := L(\rho_j - \Delta_{j-1}, \rho_j)$. Using concavity of $x \mapsto x^{\kappa}$ for $\kappa \in (0, 1)$ in the first inequality and Cauchy-Schwarz in the second one we obtain:

\begin{equation}\label{eq:rho_k_bound}
\begin{aligned}
\EE \exp\left( b \rho_k^{\kappa} \right) \leq & \exp\left( b \left( \sum_{j=1}^k j^{\inv{\kappa}-1} \right)^{\kappa} \right) \EE \left[ \exp\left( b \sum_{j=1}^k L_j^{2} + b \sum_{j=1}^k T_{k_f^j}^{\kappa} \indic{ k_f^j < \infty} \right) \right] \\
\leq & \exp\left( b k^{\kappa(\inv{\kappa})} \right) \left( \EE \left[ \exp\left( 2 b \sum_{j=1}^k L_j^{2} \right) \right] \right)^{1/2} \left( \EE \left[ \exp\left(2 b \sum_{j=1}^k T_{k_f^j}^{\kappa} \indic{ k_f^j < \infty} \right) \right] \right)^{1/2} .
\end{aligned}
\end{equation}

We note that by standard exponential approximations there exists $M_c > 0$ such that $e^{b k } \leq (1+ bM_c)^k$. The first expectation is a product of quantities that depend only on independent Brownian increments. Therefore by Gaussian tails of $L_j$ there exists $M_L$ such that:

\[ \EE \left[ \exp\left( 2 b \sum_{j=1}^k L_j^2 \right) \right] \leq (1+bM_L)^k .\] 

By \eqref{eq:tkf_bound} we get analogous bound on the second expectation - we condition successively starting with $\FF_{\rho_k}$. Therefore \eqref{eq:rho_k_bound} can be written as:

\[ \EE \exp\left( b \rho_k^{\kappa} \right) \leq \left( 1+b M_c \right)^k \left( 1 + 2b M_L \right)^{k/2} \left( 1 +2 b M_T \right)^{k/2}. \]

This can be further bounded by $(1+bM)^k$ for some $M > 0$.
\end{proof}

The following proposition is \textsl{de facto} punchline of the paper - here is where the computations from the sections above come together, which shows that the last time at which the path can drop below the extremal curve is almost surely finite and has suitable decay.

\begin{proposition}\label{prop:tails_psieta}
 {Let $\alpha, \kappa$ be as in Theorem \ref{thm:main} and $\psi_{\alpha,\kappa}$ be the time defined in \eqref{eq:psi_definition}. Then there holds:}

\[ \EE \left[ \exp\left( b \psi_{\alpha,\kappa}^{\kappa} \right) \right] < \infty. \]

\end{proposition}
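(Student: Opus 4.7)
The plan is to combine two ingredients: a geometric-tail bound on $k^\ast$, coming from the fact that each iteration succeeds with uniformly positive probability, with the quantitative control of $\EE\exp(b\rho_k^{\kappa})$ established in Proposition \ref{prop:tails_of_waiting_time}. First, since $(1+\theta)\kappa<1$ from Proposition \ref{prop:fixed_constants}, one has $\kappa<1$, so the map $x\mapsto x^{\kappa}$ is subadditive and
\[
\psi_{\alpha,\kappa}^{\kappa} \;=\; (\rho_{k^\ast}+1+t^\ast)^{\kappa} \;\leq\; \rho_{k^\ast}^{\kappa} + (1+t^\ast)^{\kappa}.
\]
It therefore suffices to bound $\EE\exp(b\rho_{k^\ast}^{\kappa})$.

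Next I would establish that $k^\ast$ has geometric tails. Propositions \ref{prop:admissibility_probability}, \ref{prop:stage2_prop} and \ref{prop:stage3_prop} provide uniform lower bounds $\lambda_1,\lambda_2,\lambda_3>0$ on the conditional probabilities that stages $1,2,3$ respectively succeed, with each such bound predicated on the success state of the previous stage. Conditioning successively along the stopping times $\rho_j$ and using the strong Markov property of the underlying Brownian motion $B$, one obtains for every $j\geq 1$
\[
\PP^{\FF_{\rho_{j-1}}}(\Delta_j=\infty) \;\geq\; \lambda := \lambda_1\lambda_2\lambda_3 \;>\; 0,
\]
and iterating this yields $\PP(k^\ast\geq k)\leq (1-\lambda)^{k}$.

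To conclude, introduce $\tilde\rho_k:=\sum_{j<k}\Delta_j\indic{\Delta_j<\infty}$, which coincides with $\rho_k$ on the event $\{k^\ast\geq k\}$. Decomposing on the value of $k^\ast$ and applying Cauchy--Schwarz gives
\[
\EE\exp(b\rho_{k^\ast}^{\kappa}) \;=\; \sum_{k\geq 0}\EE\!\left[\exp(b\tilde\rho_k^{\kappa})\indic{k^\ast=k}\right] \;\leq\; \sum_{k\geq 0}\EE\!\left[\exp(2b\tilde\rho_k^{\kappa})\right]^{1/2}\,\PP(k^\ast\geq k)^{1/2}.
\]
Proposition \ref{prop:tails_of_waiting_time}, applied with $2b$ in place of $b$, bounds the first factor by $(1+2bM)^{k/2}$, while the second is at most $(1-\lambda)^{k/2}$. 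Choosing $b$ small enough that $(1+2bM)(1-\lambda)<1$ makes the resulting series geometrically convergent, which proves the claim.

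The main subtlety (rather than a deep obstacle) is to verify the compatibility of the three success events at each iteration: the admissibility level guaranteed at the end of stage $i$ must be precisely the hypothesis needed to start stage $i+1$, so that nested conditioning genuinely produces the uniform lower bound $\lambda>0$. This bookkeeping has already been engineered into the choice of constants in Definition \ref{def:stage2_constants} and Proposition \ref{prop:fixed_constants}; once it is in place, the proof is essentially an interpolation between the sub-exponential moment of $\rho_k$ at fixed iteration count $k$ (Proposition \ref{prop:tails_of_waiting_time}) and the geometric tail of $k^\ast$.
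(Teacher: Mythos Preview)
Your proof is correct and follows essentially the same route as the paper: reduce to $\EE\exp(b\rho_{k^\ast}^\kappa)$ via subadditivity of $x\mapsto x^\kappa$, decompose on the value of $k^\ast$, apply Cauchy--Schwarz, and combine the bound $(1+2bM)^{k/2}$ from Proposition~\ref{prop:tails_of_waiting_time} with the geometric tail $(1-\lambda)^{k/2}$ of $k^\ast$, choosing $b$ small enough for convergence. Your explicit introduction of $\tilde\rho_k$ and use of $\PP(k^\ast\geq k)$ in place of $\PP(k^\ast=k)$ are slight cosmetic improvements over the paper's presentation, but the argument is the same.
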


\begin{proof}
It follows by definition of $\Delta_j$ that \[\PP\left( \Delta_j = \infty \right) > \lambda_g \cdot \lambda_s =: \lambda. \] Recall that by \eqref{eq:psi_definition} we have $\psi_{\alpha,\kappa} = \rho_{k^*} + 1 + t^*$ with $k^* = \inf\{ j \in \NN: \Delta_{j+1} = \infty \}$ and $\rho_k = \sum_{j=1}^k \Delta_j$. This means that we can estimate $k^*$ to be stochastically dominated by a geometric random variable with success parameter $\lambda$. Therefore using triangle inequality in the first inequality, Cauchy-Schwarz in the second one and Proposition \ref{prop:tails_of_waiting_time} in the last one we obtain:

\begin{equation}\label{eq:geometric_series_tails}
\begin{aligned}
\EE \left[ \exp\left( b \psi_{\alpha,\kappa}^{\kappa} \right) \right] \leq & C \sum_{j \geq 1} \EE\left[ \exp\left( b \rho_{k^*}^{\kappa} \right) \indic{ k^* = j } \right] \\
\leq & C \sum_{j\geq 1} \left( \EE \left[ \exp\left( 2b \rho_j \right) \indic{ \rho_j < \infty} \right] \right)^{1/2} \PP\left( k^* = j \right)^{1/2} \\
\leq & C \sum_{j \geq 1} (1+2bM)^{k/2} (1-\lambda)^{k/2},
\end{aligned}
\end{equation}

where the constant $C$ depends on $t^*, b, \lambda$. The series in \eqref{eq:geometric_series_tails} is bounded by a geometric one with the parameter $q = (1+2bM)^{1/2}(1-\lambda)^{1/2}$, which implies that one can always find small enough $b := b(\lambda)$ such that the series converges.
\end{proof}

Now we can proceed with the proper proof.

\begin{proof}[Proof of Theorem \ref{thm:main}]

 By Lemma \ref{lem:curve_gam_neg} or \ref{lem:curve_gam_pos} there exists $\alpha \in (H,\inv{1-\gamma}]$ and some constant $C > 0$ (which can change line by line) such that:

\[ \forall u > 0\;X^1_{\psi_{\alpha,\kappa}+u} > \varphi(M, u) -  C q^{k(u) \cdot \alpha}  \quad  { k(u) = 1 + \bar{k}(u) \quad \bar{k}(u) := \sup\{ k \in \NN: T_k < u \},  } \]

where $T_k$ is defined in \eqref{eq:geometric_times}. Set $\nu = \inv{1-\gamma} - \alpha$ and by scaling  (Proposition \ref{prop:scaling}) this means:

\begin{equation}\label{eq:lower_bound_eps_scale}
X^{\eps}_{t_{\eps} \psi_{\alpha,\kappa} + t_{\eps} u} > x_{\eps} \varphi(M,u ) \left( 1 - c_w q^{-k(u) \nu} \right).
\end{equation}

It then holds using \eqref{eq:transpoint} \[ x_{\eps} \varphi(M ,u) = x_{\eps} C_E \left( \left( C_E^{-1} M \right)^{1-\gamma} + u \right)^{\inv{1-\gamma}}= \varphi\left( M x_{\eps}, u t_{\eps} \right) .\]

We then set $t = t_{\eps} u$ - we will now estimate $q^{-k(u)}$ as a function of $t$. Moreover, let us set $\psi_{\eps,\alpha,\kappa} = t_{\eps} \psi_{\alpha,\kappa}$. 
Using the fact that $T_k$ is a geometric series one has:

\begin{equation}\label{eq:geom_bound}
q^{\bar{k}(u)}\geq C_q  (u \lor 1)
\end{equation}

for some $C_q>0$, where $\lor$ came from the uniform lower bound on $\bar{k}$. As a result we get \[ q^{-\bar{k}(u) \nu} \leq c_{\nu,q} \left( t^{-\nu} t_{\eps}^{\nu} \land 1 \right). \] Therefore the bound for all $t$ in \eqref{eq:lower_bound_eps_scale} can be rewritten as:

\begin{align*}
X^{\eps}_{\psi_{\eps,\alpha,\kappa} + t} & > \varphi( x_{\eps} M, A^+ t) -  x_{\eps} (u \lor 1)^{\alpha} \\
& > \varphi( M x_{\eps}, A^+ t ) - x_{\eps} ( t_{\eps}^{-1} t \lor 1 )^{\alpha} .
\end{align*}

 Clearly, for $t < t_{\eps}$ the second term is bounded by constant - by the choice of $c_w$ made in previous sections (Lemma \ref{lem:curve_gam_neg} and \ref{lem:curve_gam_pos}) we know that $X^{\eps}$ will stay positive. For $t > t_{\eps}$ we use Proposition \ref{prop:scaling} again and obtain:

\[ X^{\eps}_{\psi_{\eps,\alpha,\kappa} + t} > \varphi( M x_{\eps}, A^+ t ) - \eps t_{\eps}^{H-\alpha} t^{\alpha}, \]

as stated in the Theorem. Note that the constant in front of the error term is always smaller than one - this is because the additional multiplicative constant coming from \eqref{eq:geom_bound} is not bigger than $q^{\alpha}$. On the level of $\eps = 1$ we have then the bound $\absv{ w_t}  \leq c_w q^{\alpha} t^{\alpha}$. We picked $c_w \leq 3K_A$, where $K_A$ is chosen to be a constant smaller than $q^{-\alpha}/3$ (see Definition \ref{def:stage2_constants}). Therefore $c_w q^{\alpha} \leq 1$ as desired.  Obviously, the same computations follow if $X_{\psi_{\eps,\alpha,\kappa}} < 0$.

To see that $\PP\left( C^+ \right) >0$, we simply bound it from below by the probability of succeeding at the first attempt - for instance for some suitable constants $c_1,c_2,c_3$ where $c_3 < c_1$ (in fact $c_3$ should be taken much smaller than $c_1$) we take the following:

\[
\PP\left( C^+ \right) \geq \PP\left( \max(L(-\infty, -1), S(-1,0)) < c, Z \in (c_1, c_2), \norm{ b }_{\ccc{1/2-\delta};[0,1]} < c_3, A_w, A_g, k_1^f = \infty \right),
\]

where $Z$ and $\norm{ b }_{\ccc{1/2-\delta};[0,1]}$ are the Brownian decomposition for noise (see \eqref{eq:bm_bridge}) in $t \in [0,\Delta_0]$ are such that $X_{\Delta_0} > 0$. The constants $c_2, c_3$ are small enough such that after the admissible condition holds. Then two events $A_g,A_w$ (Definition \ref{def:getting_out_event} and Lemma \ref{lem:getting_out_proba}) hold and finally we manage to sample the infinite sequence of noises in Lemma \ref{lem:infinite_brownian_holder_norms}. In the case of $C^-$ we can change the sign of $c_1,c_2$.
\end{proof}

We conclude with the (easy) proof of the upper bound 
\begin{proof}[Proof of Proposition \ref{prop:upper_bound}]
For $\gamma > 0$ we let, for a  given $t \geq 0$, 
\[
\tau = \sup \{ s \leq t : X^\eps_s =0 \}
\]
 and we can then directly apply Lemma \ref{lem:positive_comparison} on $[\tau,t]$ with $\bar{w} = \eps  \sup_{\tau \leq s \leq t} | W^H_s - W^H_{\tau} | \leq 2 \eps \norm{ \wh }_{\infty;[0,1]}$, which gives

\[ X^{\eps}_t \leq \varphi(  { 2 \bar{w} }, A^+ t), \]

and similarly for the lower bound. 

For $\gamma < 0$ we define

\[ \tau = \sup\{ u < T\; X_t \leq \varphi(0, A^+ t) \}. \]

Then:

\[ \forall t \geq \tau, \;X_t - \varphi(0, A^+ t) = \int_{\tau}^t  { f(X_r) } dr - A^+ \int_{\tau}^t \absv{ \varphi(0, A^+ r) }^{\gamma} dr + \eps (w_t - w_{\tau}).\]

By the definition of $\tau$ and monotonicity of $f$ on $(0,+\infty)$, we have $ { f(X_r) } < A^+  \absv{ \varphi(0, A^+ r) }^{\gamma}$ for $r \in [\tau, t]$, therefore the right hand-side is bounded with $2 \eps \norm{ w }_{\infty;[0,T]}$.
\end{proof}

\section{Some technical estimates}\label{sec:technical}

\begin{lemma}\label{lem:subgaussian_scaling}
Fix $\delta > 0$ and define for $0<a<b$ :

\[ M(a,b) = \inv{\absv{b-a}^{\delta}} \sup_{a<u<v<b} \frac{\absv{B_v - B_u}}{ \absv{v-u}^{1/2-\delta}}. \] 

Then there exists $\lambda > 0$ such that for all $a,b$, $\EE \left[ \exp\left( \lambda M(a,b)^2 \right) \right] < 2$.
\end{lemma}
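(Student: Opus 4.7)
The plan is to reduce the estimate to a single distribution by Brownian scaling, and then invoke a classical Gaussian concentration result for the Hölder seminorm on a fixed interval.

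First I would exploit the scaling invariance of Brownian motion. Set $T = b-a$ and write any $a<u<v<b$ as $u=a+Ts$, $v=a+Tt$ with $0 \leq s<t \leq 1$. Letting $\tilde{B}_s := T^{-1/2}(B_{a+Ts}-B_a)$, the process $(\tilde{B}_s)_{s \in [0,1]}$ is again a standard Brownian motion, and a direct substitution gives
\[
\sup_{a<u<v<b} \frac{|B_v - B_u|}{|v-u|^{1/2-\delta}} \;=\; \sup_{0<s<t<1} \frac{T^{1/2}|\tilde{B}_t - \tilde{B}_s|}{T^{1/2-\delta}|t-s|^{1/2-\delta}} \;=\; T^{\delta}\,\|\tilde{B}\|_{\mathcal{C}^{1/2-\delta};[0,1]}.
\]
Dividing by the prefactor $|b-a|^\delta = T^\delta$ yields $M(a,b) \stackrel{\text{law}}{=} \|\tilde{B}\|_{\mathcal{C}^{1/2-\delta};[0,1]}$, so it suffices to prove the sub-Gaussian estimate with $(a,b)=(0,1)$ for one fixed Brownian motion.

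Next I would establish that there is some $\lambda>0$ such that $\EE\exp(\lambda \|B\|_{\mathcal{C}^{1/2-\delta};[0,1]}^2) < 2$. The cleanest route is Fernique's theorem: the Hölder seminorm is a measurable semi-norm on $C([0,1],\RR)$ which is finite almost surely (this is the standard Kolmogorov--Chentsov regularity of Brownian paths, valid for any Hölder exponent strictly less than $1/2$), hence it has a Gaussian tail under Wiener measure. Alternatively, one can argue directly via the Garsia--Rodemich--Rumsey inequality with $\Psi(x)=\exp(cx^2)-1$ and $p(u)=u^{1/2-\delta/2}$: this bounds the Hölder seminorm by a multiple of $\bigl(\int_0^1\!\int_0^1 \Psi\bigl(\frac{|B_u-B_v|}{p(|u-v|)}\bigr)du\,dv\bigr)^{1/2}$ and Gaussian integrability of the double integral (which follows from Fubini plus $\EE\exp(c Z^2)<\infty$ for standard normals $Z$, choosing $c$ small) gives the required exponential moment.

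Finally, choosing the constant $\lambda$ small enough so that $\EE\exp(\lambda\|\tilde{B}\|_{\mathcal{C}^{1/2-\delta};[0,1]}^2) < 2$, the identity in law transported along the scaling argument shows the same bound holds uniformly for every pair $a<b$. The only point that requires a little care is verifying that the Hölder seminorm is a.s.\ finite so that Fernique applies (respectively that the auxiliary constants in Garsia--Rodemich--Rumsey can be chosen so that the resulting exponential moment is integrable), but both are standard facts about Brownian motion.
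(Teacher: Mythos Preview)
Your proof is correct and follows essentially the same approach as the paper: both reduce by Brownian scaling to the identity in law $M(a,b) =_{\PP} \|B\|_{\ccc{1/2-\delta};[0,1]}$, and then conclude by the Gaussian tails of the H\"older seminorm on the unit interval. The paper simply asserts these Gaussian tails without further justification, whereas you spell out Fernique's theorem (or GRR) as the reason, but the substance is the same.
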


\begin{proof}
Set $\bar{B}_v = B_v - B_a =_{\PP} B_{v-a}$. Then using Brownian scaling:

\[ B_v - B_u =_{\PP} \absv{b-a}^{1/2} \left( B_{\frac{v-a}{b-a}} - B_{\frac{u-a}{b-a}} \right) \]

Similarly we multiply $v-u$ by one and get:

\[ \frac{\absv{B_v - B_u}}{ \absv{v-u}^{1/2-\delta}} =_{\PP} \absv{b-a}^{\delta} \frac{ B_{\frac{v-a}{b-a}} - B_{\frac{u-a}{b-a}} }{ \absv{ \frac{v-a}{b-a} - \frac{u-a}{b-a} }^{1/2-\delta} } =_{\PP} \absv{b-a}^{\delta} M(0,1). \]

Taking the supremum over $u,v$ and dividing by $\absv{b-a}^{\delta}$ we get the claim by Gaussian tails of H\"older norm of $B$ on unit interval.
\end{proof}

We give here the proof of Lemma \ref{lem:bridge_in_the_past}.

\begin{proof}
We integrate by parts with taking $dB_r = d(B_r - B_1)$. Note that in this case the term standing by the $Z$ part of decomposition would be negative, therefore we bound it in the absolute terms. Since in this regime we may need to take $z$ of sublinear growth, let us fix $\beta \in (0,1)$ and proceed in an identical way:
\begin{equation}\label{eq:bound_h_small_zw}
 {
\begin{aligned}
& \absv{ \int_0^1 \left( 1 + w + z - r \right)^{H-1/2} - \left( 1 +w - r \right)^{H-1/2} dB_r } \\ = & \absv{ B_1 - B_0} \left( (1+w+z)^{H-1/2} - (1+w)^{H-1/2} \right) \\ & \;\;\; + \absv{ (H-1/2) \int_0^1 \left( \left( 1 + w + z - r \right)^{H-\frac{3}{2}} - \left( 1+w-r \right)^{H-\frac{3}{2}} \right) (B_r - B_1) dr } \\ 
< &  z^{\beta} Z w^{H-1/2-\beta} +  Z z^{\beta} w^{H-\frac{3}{2} - \beta}  + z^{\beta} \norm{ b }_{\ccc{1/2-\delta};[0,1]} w^{H-\frac{3}{2} - \beta} .
\end{aligned}
}
\end{equation}

Clearly the second term decays faster with $w$ and the desired bound follows.
\end{proof}

We can now give a proof of Lemma \ref{lem:new_noise_bridge_decomp}. 

\begin{proof}

We use integration by parts for $\wh$ - \eqref{eq:ibp} - and \eqref{eq:bm_bridge} to get:

\begin{align*}
\int_0^s (s-r)^{H-1/2} dB_r = & s^{H-1/2} (B_s-B_0) + (H-1/2) \int_0^s (s-r)^{H-3/2} (B_r - B_s) dr \\
= &  { s^{H+1/2} Z \inv{H+1/2}} \\ & \;\;\;+ s^{H-1/2} b_{s} +  {(H- 1/2)} \int_0^s (s-r)^{H-3/2} \left( b_{r} - b_{s} \right) dr.
\end{align*}

We denote again the last two terms with $b^H_s$ and using H\"older estimates get an analogous bound, but with the constant $C_{H,w} = 1 + \frac{1/2-H}{ H-\delta}$. To show the bound on the H\"older norm of $b^H$, we recall Theorem 2.8 in \cite{Pic11} - using $I^{\alpha}$ for the fractional integration operator introduced there, we observe that $b^H = I^{H-1/2} b$ and then apply its first statement with $\alpha = H-1/2, \beta = 1/2-\delta, \gamma = 0$, in which case $\mathbb{H}^{\beta,0} = \ccc{\beta}$. 

\vspace{0.15cm}
For the second claim we use the fact that for all $r \in [0,1]$ a map $r \mapsto (1+z-r)^{H-1/2}$ is Lipschitz for any $z>0$, so that one can interpret the integral in a Young sense. We insert \eqref{eq:bm_bridge} and in the second term we use integration by parts for Young integrals.

\begin{align*}
\int_0^1 (1+z -r )^{H-1/2} dB_r = & Z \int_0^1 (1+z-r)^{H-1/2} dr + (H-1/2) \int_0^1 (1+z-r)^{H-1/2} db_{r} \\
= & Z h(z) + (H-1/2) \int_0^1 (1+z-r)^{H-3/2} b_{r} dr ,
\end{align*}

where $h(z)$ has stated properties by the monotonicity of the integral. The second term is bounded using H\"older continuity of $f$ - one gets uniform bound by observing that the integrand is decreasing in $z$, the value of the constant $C_H$ comes from $\int_0^1 r^{H-1} dr \leq \inv{H}$.
\end{proof}

We recap here a proof already given in the introduction to Section 4.2 in \cite{PR20}.
 
\begin{proposition}\label{prop:gaussian_tails_norms}
Let $U$ be a random time adapted to filtration $(\FF_t)_{t \geq 0}$ and let $T \in \FF_U$. Let $L(U, T), S(T-1,T), T > U + 1$ be the norms from Definition \ref{def:long_short}. Then there exists $\lambda_L$ (respectively $\lambda_S$) such that \[ \EE^{\FF_{U}} \left[\exp\left( \lambda_L L(U, T)^2 \right) \right] \leq 2 \quad \EE^{\FF_U} \left[ \exp\left( \lambda_S S(T-1,T)^2 \right) \right] \leq 2.\]
\end{proposition}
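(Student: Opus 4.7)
The plan is to reduce both bounds to classical Gaussian concentration for the Hölder seminorm of Brownian motion on a unit interval, via conditioning on $\FF_U$ and time reversal. Since $T$ is $\FF_U$-measurable, once we condition on $\FF_U$ the time $T$ becomes deterministic, and by the strong Markov property at $U$ the future increments $(B_{U+t}-B_U)_{t\geq 0}$ form a standard Brownian motion independent of $\FF_U$. It therefore suffices to produce $\lambda_L,\lambda_S>0$ that are uniform in the deterministic value $T-U\geq 1$.

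Next I would exploit time reversal: the process $\tilde B_s := B_T - B_{T-s}$, for $s\in[0,T-U]$, has the law of a Brownian motion, since its increments $\tilde B_{s_2}-\tilde B_{s_1}=B_{T-s_1}-B_{T-s_2}$ are centered Gaussian of variance $s_2-s_1$ and independent across disjoint intervals. Under the change of variable $s=T-r$ this yields
\[ S(T-1,T) = \sup_{s\in[0,1]} \frac{|\tilde B_s|}{s^{1/2-\delta}}, \qquad L(U,T) = \sup_{s\in[0,T-U]} \frac{|\tilde B_s|}{(1+s)^{1/2+\delta}}. \]
The bound on $S$ is then immediate: the right-hand side is comparable to the $\ccc{1/2-\delta}([0,1])$-Hölder seminorm of a Brownian motion (using $\tilde B_0=0$), which has a Gaussian exponential moment by Fernique's theorem (or Garsia--Rodemich--Rumsey), with a universal $\lambda_S$.

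For $L$ I would extend $\tilde B$ to a Brownian motion $\hat B$ on $[0,\infty)$ by appending an independent Brownian motion past time $T-U$, obtaining $L(U,T) \leq V := \sup_{s\geq 0} |\hat B_s|/(1+s)^{1/2+\delta}$. A dyadic decomposition $[0,\infty)=\bigcup_{n\geq 0}[2^n-1,2^{n+1}-1]$ combined with Brownian scaling reduces the supremum on each block to the $\ccc{1/2-\delta/2}$-norm of a rescaled Brownian motion on a unit interval, weighted by a factor of order $2^{-n\delta/2}$; summing the resulting Gaussian tail bounds via a union bound gives $\EE[\exp(\lambda_L V^2)]\leq 2$ for a universal $\lambda_L>0$. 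Since both $\lambda_L,\lambda_S$ do not depend on $T-U$, the conditional estimates follow. The main (minor) technical point is the clean reduction from the conditional setting with $T\in\FF_U$ to the deterministic one; this is handled by a standard regular-conditional-probability argument together with the strong Markov property at $U$.
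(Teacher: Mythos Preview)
Your proof is correct and follows essentially the same route as the paper: condition on $\FF_U$ (so $T$ becomes deterministic and the post-$U$ increments are a fresh Brownian motion), perform the time reversal $\tilde B_s = B_T - B_{T-s}$ to rewrite $L$ and $S$ as weighted sup-norms of a Brownian motion started at $0$, and then dominate $L(U,T)$ by the supremum $V=\sup_{s\geq 0}|\tilde B_s|/(1+s)^{1/2+\delta}$. The only difference is cosmetic: the paper simply cites \cite{Pic11} for the Gaussian tails of $V$, whereas you sketch the dyadic-block argument explicitly.
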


\begin{proof}
For any deterministic $s<t$ there holds $(B_r - B_t)_{r \in [s,t]} =_{\PP} (B_r)_{r \in [0,t-s]}$. Therefore conditionally on $\FF_U$ the random variable $L(U,T)$ is equal in law to the following norm:

\[ V(T-U) = \sup_{r \in [0, T-U]} \frac{ \bar{B}_r }{ (1+r)^{1/2+\delta} }, \]

 {where $\bar{B}$ is another Brownian motion } independent from $\FF_U$. This norm is obviously stochastically dominated by the one with $T - U$ replaced with $\infty$, which has Gaussian tails (see for instance \cite{Pic11}, Section 2). The constant $\lambda_L$ can be picked as the one for which  { $\EE\left[ \exp\left( \lambda_L V(\infty)^2 \right) \right] \leq 2$ }holds.

The same argument applies to $S(T-1,T)$, except that we then compare with $\sup_{r \in [0,1]} r^{-1/2+\delta} \absv{B_r}$ and will not use domination by an equivalent norm on the full real line.
\end{proof}

Here we prove that the combination of constants needed for Stage 2 indeed exists.

\begin{proof}[Proof of  { Proposition \ref{prop:stage2_constants} }]

By the first constraint \eqref{eq:ca_tstar} one has $c_{A,f} < (1+t^*)^{H-1} \leq 2\left( U t_e^{-1/2-\delta} \right)^{\vartheta(H-1)}$. First let us check the possible values of $\vartheta$ - the constraint \eqref{eq:vartheta_lower} is going to be satisfied if $\vartheta < \left( 1/2 + \delta \right)^{-1}$. 

We now find $t_e, \delta$ such that \eqref{eq:girsanov_proba} can hold. We fix $\delta < 1/2+H(\gamma-1)$ - such $\delta$ always exists by $\gamma > 1 - \inv{2H}$. Setting $\delta ' = 1/2 + H(\gamma-1)$ one takes $t_e$ such that $t_e^{-2(\delta-\delta')} < 9$, so this will be satisfied for any $t_e <1$. At this stage we do not specify the choice of $\delta$.

We can work with $K_A = 1$, the proof of general case follows in the identical way. We then pass to \eqref{eq:teu}. For $H < 1/2$ the bounds \eqref{eq:teu}, \eqref{eq:vartheta_lower} impose both upper and lower bounds on $t_e, U$ (one can replace $t_e^H$ with 1):

\begin{equation}\label{eq:three_one_thirds}
t_e^{-\vartheta (H+\delta) (1/2+\delta)} U^{\vartheta(H-1/2)} \geq 5^{\inv{\alpha}} \quad t_e^{-3\delta} U^{-\vartheta(1/2-3\delta)} \leq 1/3 \quad (t^*)^{1+\vartheta(H-1/2-\beta)} < 1/3.
\end{equation}

By the choice of $\delta$ in \eqref{eq:ca_tstar} the second inequality can be replaced with $U^{-\vartheta/2} \leq 2/3$ - note the absence of the lower bound on $\delta$, so it can be always decreased if need be. We take any $\beta > \inv{\vartheta} + H - 1/2 > H$ (this follows from $\vartheta < 2$) and then the power in the last inequality in \eqref{eq:three_one_thirds} is negative. For fixed $U$ (for instance $U = \left( \frac{3}{2} \right)^{2/\vartheta}$) therefore it suffices to increase $t_e^{-1}$ in order to get the last inequality. Note that for $H < 1/2$ the power on $U$ in the first inequality is negative - to obtain the desired inequality we increase $t_e^{-1}$ even further. One can take then, for instance:

\[ U = \left( \frac{3}{2} \right)^{2/\vartheta} \quad t_e^{-1} = \left( 5^{\inv{\alpha}} \left( \frac{3}{2} \right)^{1-2H} \right)^{\inv{\vartheta(H+\delta)(1/2+\delta)}} \lor 3^{\inv{\vartheta(H-1/2-\beta)+1}} . \]

Clearly there exists $\delta(H,\vartheta)$ such that the second inequality in \eqref{eq:ca_tstar} also holds.

The bound on $C_W$ is seen to be easily satisfied in the absence of the lower bound. 
\end{proof}
%
%

\section*{Acknowledgements}
The authors are grateful to two anonymous referees for their careful reading and remarks which helped to improve the clarity of the presentation. This work was done when both authors were employed at Université Paris-Dauphine.

%


\bibliographystyle{alpha} 
\bibliography{refs}       


\bigskip
\bigskip

\noindent ($^{1}$) 
LAMA, Université Gustave Eiffel \\
Cité Descartes, Bâtiment Copernic \\
5 boulevard Descartes \\
77454 Marne-la-Vallée cedex 2 \\
email: paul.gassiat@univ-eiffel.fr


\noindent ($^{2}$) LMBA, \\
Université de Bretagne Occidentale, \\
6 avenue Le Gorgeu, CS 93837, \\
29238 Brest cedex 3 \\
email: lmadry@univ-brest.fr

\end{document}